\newtheorem{theorem}{Theorem}[section]
\newtheorem{lemma}[theorem]{Lemma}
\theoremstyle{definition}
\newtheorem{definition}[theorem]{Definition}
\newtheorem{corollary}[theorem]{Corollary}
\theoremstyle{remark}
\newtheorem{remark}[theorem]{Remark}
\numberwithin{equation}{section}
\newcommand{\N}{\mathcal N}
\newcommand{\codeg}{\textnormal{codeg}}
\renewcommand{\epsilon}{\varepsilon}
\begin{document}

\title{On the Second Eigenvalue of Random Bipartite Biregular Graphs}

\author{Yizhe Zhu}
\address{Department of Mathematics, University of California Irvine, Irvine, CA 92697}
\email{yizhe.zhu@uci.edu}
\thanks{}

\subjclass[2000]{Primary 60C05, 60B20; Secondary 05C50}
\keywords{random bipartite biregular graph, spectral gap, switching, size biased coupling}
\date{\today}

\begin{abstract}

We  consider the spectral gap of a uniformly chosen random  $(d_1,d_2)$-biregular bipartite graph $G$ with $|V_1|=n, |V_2|=m$, where  $d_1,d_2$ could possibly grow with $n$ and $m$. Let $A$ be the adjacency matrix of $G$. Under the assumption that $d_1\geq d_2$ and $d_2=O(n^{2/3}),$ we show that $\lambda_2(A)=O(\sqrt{d_1})$ with high probability. As a corollary, combining the results from \cite{tikhomirov2019spectral}, we  showed that the second singular value of a uniform random $d$-regular digraph is $O(\sqrt{d})$ for $1\leq d\leq n/2$ with high probability.  Assuming $d_2$ is fixed and $d_1=O(n^2)$, we further prove that for a random $(d_1,d_2)$-biregular bipartite graph, $|\lambda_i^2(A)-d_1|=O(\sqrt{d_1})$ for all $2\leq i\leq n+m-1$ with high probability. 
The proofs of the two results are based on the size biased coupling method introduced in \cite{cook2018size} for random $d$-regular graphs and several new switching operations we defined for random bipartite biregular graphs.
\end{abstract}

\maketitle

\section{Introduction}

An expander graph is a sparse graph that has strong connectivity properties and exhibits rapid mixing. Expander graphs
play an important role in computer science, including sampling, complexity theory, and the design of
error-correcting codes (see \cite{hoory2006expander,alon1985lambda1}). When a graph is $d$-regular, i.e., each vertex has degree $d$, quantification  of expansion is possible based on the eigenvalues of the adjacency matrix. Let $A$ be the adjacency matrix of a $d$-regular graph. The first eigenvalue $\lambda_1(A)$ is always $d$. The second eigenvalue in absolute value $\lambda(A)=\max\{ \lambda_2(A),-\lambda_n(A)\}$ is of particular interest, since the difference between $d$ and $\lambda$, also known as the \textit{spectral gap}, provides an estimate on the expansion property of the graph. The study of the spectral gap in $d$-regular graphs with fixed $d$ had the first breakthrough in the
Alon-Boppana bound. It was proved in \cite{alon1986eigenvalues,nilli1991second}  that for $d$-regular graph $\lambda(A)\geq 2\sqrt{d-1}-o(1).$ Regular graphs with $\lambda(A)\leq 2\sqrt{d-1}$  are called \textit{Ramanujan}.
In \cite{friedman2008proof}, Friedman proved Alon's conjecture in \cite{alon1986eigenvalues} that for the uniform model of random $d$-regular graph with fixed $d\geq 3$,  $\lambda(A)\leq 2\sqrt{d-1}+\epsilon$ asymptotically almost surely for any $\varepsilon>0$. The result implies almost all random regular graphs are nearly Ramanujan.  Bordenave \cite{bordenave2019new} gave a simpler proof that $\lambda(A)\leq 2\sqrt{d-1}+\epsilon_n$ for a sequence $\epsilon_n\to 0$ asymptotically almost surely.  Very recently, in  \cite{huang2021spectrum}, this estimate was improved  to $\lambda(A)\leq 2\sqrt{d-1}+O(n^{-c})$.

A generalization of Alon's question is to consider the spectral gap of random $d$-regular graphs when $d$ grows with $n$. In \cite{broder1998optimal} the authors showed that for $d=o(n^{1/2})$, a uniformly distributed random $d$-regular graph satisfies $\lambda(A)=O(\sqrt{d})$ with high probability.  The authors worked with random regular multigraphs
drawn from the configuration model and translated the result for the uniform model by the contiguity argument,  which hit a barrier at $d=o(n^{1/2})$.  The range of $d$ for the bound $\lambda(A)=O(\sqrt{d})$ was extended to $d=O(n^{2/3})$ in \cite{cook2018size} by proving concentration  results directly for the uniform model. In \cite{tikhomirov2019spectral} it was proved that the $O(\sqrt{d})$ bound holds for $n^{\epsilon}\leq d\leq n/2$  with $\epsilon\in (0,1)$.  Vu in \cite{vu2008random,vu2014combinatorial} conjectured   that $\lambda(A)=(2+o(1))\sqrt{d(1-d/n)}$ with high probability when $d\leq n/2$ and $d$ tends to infinity with $n$. The combination of the results in \cite{cook2018size} and \cite{tikhomirov2019spectral} confirms Vu's conjecture up to a multiplicative constant. Recently, the authors
in \cite{bauerschmidt2020edge} showed that for $ n^{\epsilon}\leq d \leq n^{2/3-\epsilon}$ with any $\epsilon>0$, $\lambda(A)=(2+o(1))\sqrt{d-1}$ with high probability. Later, it was proved in \cite{sarid2022spectral} that Vu's conjecture holds for $\log^{10}(n) \ll d\leq cn$, where $c$ is a small constant. In \cite{he2022spectral}, the same bound was proved for $n^{2/3}\ll d\leq n/2$, which settled this conjecture in the regime $\log^{10} n\ll d\leq n/2$.

\subsection{Random bipartite biregular graphs}
In many applications, one would like to construct bipartite expander graphs with two unbalanced disjoint vertex sets, among which bipartite biregular graphs are of particular interest.
An \textit{$(n,m,d_1,d_2)$-bipartite biregular  graph} is a bipartite graph $G=(V_1,V_2, E)$ where $|V_1|=n, |V_2|=m$ and every vertex in $V_1$ has degree $d_1$ and every vertex in $V_2$ has degree $d_2$. Note that we must have $nd_1=md_2= |E|$. When the number of vertices is clear, we call it a $(d_1,d_2)$-biregular bipartite graph for simplicity.   Let $X\in \{0,1\}^{n\times m}$ be a matrix indexed by $V_1\times V_2$ such that 
$X_{ij}=1$ if and only if $ (i,j)\in E$. The adjacency matrix of a $(d_1,d_2)$-biregular bipartite graph with $V_1=[n], V_2=[m]$ can be written as 
\begin{align}\label{eq:A}
    A=\begin{bmatrix}
    0  &X\\
    X^{\top} &0 
    \end{bmatrix}.
\end{align}
All eigenvalues of $A$ come in pairs as $\{-\lambda, \lambda\}$, where $|\lambda|$ is a singular value of $X$ along with at least $|n-m|$ zero eigenvalues. It's easy to see  $\lambda_1(A)=-\lambda_{n+m}(A)=\sqrt{d_1d_2}$. The difference between $\sqrt{d_1d_2}$ and $\lambda_2(A)$ is called the spectral gap for the bipartite biregular graph. The spectral gap of  bipartite biregular graphs has found applications in  error correcting codes, matrix completion and community detection, see for example \cite{tanner1981recursive,sipser1996expander, gamarnik2017matrix,brito2018spectral,burnwal2020deterministic}.    Previous works of \cite{feng1996spectra,sole1996spectra} showed an analog of Alon-Boppana bound for bipartite biregular graph: for any sequence of $(d_1,d_2)$-biregular bipartite graphs with fixed $d_1$ and $d_2$, as the number of vertices tends to infinity, for any $\epsilon>0$, 
$\liminf_{n\to\infty}\lambda_2\geq \sqrt{d_1-1}+\sqrt{d_2-1}-\epsilon.$ 
In \cite{feng1996spectra}, a $(d_1,d_2)$-biregular bipartite graph is defined to be Ramanujan if $\lambda_2\leq \sqrt{d_1-1}+\sqrt{d_2-1}$.
It was shown in \cite{marcus2015interlacing} that there exist infinite families of $(d_1,d_2)$-biregular bipartite Ramanujan graphs for every $d_1,d_2\geq 3$. Very recently, for fixed $d_1$ and $d_2$, \cite{brito2018spectral} showed that almost all $(d_1,d_2)$-biregular bipartite graphs are almost Ramanujan in the  sense that
$
 \lambda_2\leq \sqrt{d_1-1}+\sqrt{d_2-1}+\epsilon_n   
$ for a sequence $\epsilon_n\to 0$ asymptotically almost surely.

In this paper, we  consider the spectral gap of a uniformly chosen random  $(d_1,d_2)$-biregular bipartite graph with $V_1=[n], V_2=[m]$, where $d_1,d_2$ can possibly grow with $n$ and $m$.
Without loss of generality, we assume $d_1\geq d_2$. Let $X\in \mathbb R^{n\times m}$ be the \textit{biadjacency matrix} of a bipartite biregular graph. Namely, $X_{uv}=1$ if $(u,v)\in E, u\in V_1$ and $v\in V_2$. Let $\mathbf{1}_m=(1,\dots,1)^{\top} \in \mathbb R^m$. Since $X^{\top}X\mathbf{1}_{m}=d_1d_2\mathbf{1}_m $, the largest singular value satisfies $\sigma_1(X)=\sqrt{d_1d_2}.$ Moreover, the second eigenvalue of $A$ in \eqref{eq:A} is equal to  $\sigma_2(X)$. 

Our first result is the second eigenvalue bound, which is an extension of \cite{brito2018spectral} to the case where $d_1,d_2$ can possibly grow with $n,m$.

\begin{theorem}\label{thm:main1} Let $A$ be the adjacency matrix of a uniform random $(n,m,d_1,d_2)-$bipartite biregular graph with $d_1\geq d_2$.
For any $K>0$, if $d_2\leq \frac{1}{2}n^{2/3}$, then there exists  a constant $\alpha>0$ depending only on $K$ such that 
\begin{align}\label{eq:thm111}
    \mathbb P\left(\lambda_2(A)\leq \alpha \sqrt{d_1}\right)\geq 1-m^{-K}-e^{-m}.
\end{align}
\end{theorem}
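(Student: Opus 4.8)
The plan is to bound the second singular value $\sigma_2(X)$ via a concentration argument for the uniform model, following the size-biased coupling strategy of \cite{cook2018size}. The starting point is the variational characterization: since $X\mathbf{1}_m = d_2\mathbf{1}_n$ and $X^\top\mathbf{1}_n = d_1\mathbf{1}_m$, the top singular vector pair is known, and
\[
\sigma_2(X) = \max\left\{ \langle X\x,\y\rangle : \x\perp\mathbf{1}_m,\ \y\perp\mathbf{1}_n,\ \|\x\|=\|\y\|=1\right\}.
\]
First I would reduce this to a uniform deviation bound: by a standard $\varepsilon$-net argument over the two unit spheres in the orthogonal complements, it suffices to show that for each \emph{fixed} pair $(\x,\y)$ in the relevant subspaces, the random variable $\langle X\x,\y\rangle = \sum_{(u,v)\in E} x_v y_u$ concentrates around its mean (which is $0$ after the orthogonality projections), with subgaussian-type tails at scale $\sqrt{d_1}$, and then union bound over a net of size $e^{O(n+m)}$. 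This forces the exponential loss $e^{-m}$ and the polynomial loss $m^{-K}$ in \eqref{eq:thm111}, and explains why the target probability has that shape.

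The core of the argument is establishing this one-point concentration. Here I would use size-biased couplings built from the switching operations the paper announces in its abstract. Concretely, to control the upper tail of $f(G) = \langle X\x,\y\rangle$ one constructs, for the edge set of $G$ weighted appropriately by $\x,\y$, a coupling $(G, G')$ where $G'$ is distributed as $G$ size-biased in the direction of $f$, obtained from $G$ by performing a random local switch (swapping a pair of edges across $V_1\times V_2$ so as to preserve both degree sequences). The key estimates are: (i) the switch changes $f$ by at most $O(\max|x_v|\cdot\max|y_u|)$, and crucially, summed appropriately, the total ``influence'' is controlled by $\|\x\|_2\|\y\|_2$ together with an $\ell^\infty$/$\ell^4$-type bound coming from the biregularity; (ii) the bounded-differences/Bennett-type inequality for size-biased couplings (as in \cite{cook2018size}, or the Bhattacharya–Goldstein–Mukherjee framework) then yields $\prb(f \geq t) \leq \exp(-c t^2/(d_1 + t))$ for $t$ up to order $\sqrt{d_1}\cdot\mathrm{poly}$. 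One must run the symmetric construction for the lower tail. The constraint $d_2 = O(n^{2/3})$ enters exactly in verifying that the switching is ``almost proper'': the fraction of switches that create multi-edges or otherwise fail is negligible, and the resulting correction to the coupling is lower-order; this is the same threshold phenomenon that appeared for $d$-regular graphs in \cite{cook2018size}.

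The main obstacle I anticipate is controlling the net argument in the regime where $d_1$ is much larger than $d_2$ — the two spheres have dimensions $m-1$ and $n-1$ with $n \geq m$, and the concentration scale $\sqrt{d_1}$ must beat the net entropy $O(n+m) \asymp O(n)$. A naive bounded-differences bound gives fluctuations of order $\sqrt{|E|}\cdot\max|x_v||y_u| = \sqrt{n d_1}\cdot\max|x_v||y_u|$, which is too weak; the point of the switching coupling is precisely to replace $\sqrt{|E|}$-type variance by a quantity of order $d_1$ (after using $\|\x\|=\|\y\|=1$), so that the tail at scale $\alpha\sqrt{d_1}$ has exponent linear in $n+m$ with a constant that can be made large by taking $\alpha$ large — this is where the dependence of $\alpha$ on $K$ comes from. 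A secondary technical point is handling vectors $\x,\y$ that are ``spread out'' versus ``spiky'' separately (a standard split into a bounded-entry regime and an exceptional regime treated by a crude bound plus the approximate regularity of $X$), since the switching influence bound degrades for spiky vectors; the biregularity and the orthogonality to the all-ones vectors are what make this dichotomy manageable.
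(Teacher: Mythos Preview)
Your high-level plan---variational formula, $\varepsilon$-net, size-biased coupling built from switchings---matches the paper, but you have the architecture inverted. The light/heavy split is not a ``secondary technical point''; it is the backbone (this is the Kahn--Szemer\'edi argument). The size-biased coupling concentration inequality you describe is only applied to the \emph{light} part
\[
f_{\mathcal L(x,y)}(X)=\sum_{(u,v):\,|x_uy_v|\le \sqrt{d_1}/m} x_u y_v X_{uv},
\]
precisely because the Bennett-type bound requires a uniform cap $a=\sqrt{d_1}/m$ on the coefficients. There is no direct concentration bound of the form $\exp(-ct^2/(d_1+t))$ for the full bilinear form $\langle x,Xy\rangle$; for spiky vectors that inequality is simply false at scale $\sqrt{d_1}$.

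The heavy part is handled by a different mechanism that you do not name: one proves that with probability $\ge 1-m^{-K}$ the matrix $X$ satisfies a \emph{discrepancy property} $\mathrm{DP}(\delta,\kappa_1,\kappa_2)$ controlling $e(S,T)$ for all $S\subset[n],\,T\subset[m]$ (this is itself established via the same coupling concentration, applied to $Q=\mathbf 1_S\mathbf 1_T^\top$ and union-bounded over all $S,T$). On that event, a purely deterministic argument gives $f_{\mathcal H(x,y)}(X)\le \alpha_0\sqrt{d_1}$ uniformly in $(x,y)$. So the $m^{-K}$ in the theorem is the failure probability of discrepancy, and the $e^{-m}$ comes from the net union bound over the light part; your proposal blurs these two sources.

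Finally, your account of where $d_2\le \tfrac12 n^{2/3}$ enters is incomplete. The switch failure rate is $\gamma_0=\tfrac{d_1}{m-d_1}=\tfrac{d_2}{n-d_2}\asymp d_2/n$, which is $o(1)$ for any $d_2=o(n)$. The actual constraint is that the mean shift $\gamma_0\mu$ in the coupling tail bound must be $O(\sqrt{d_1})$; since $\mu\le d_1\sqrt{n/m}=\sqrt{d_1 d_2}\cdot\sqrt{d_1}$ on the light part, this forces $\gamma_0\sqrt{d_2}\asymp d_2^{3/2}/n=O(1)$, i.e.\ $d_2=O(n^{2/3})$.
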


A \textit{$d$-regular digraph} 
on $n$  vertices is a digraph with each vertex having $d$ in-neighbors and $d$ out-neighbors. We can interpret the biadjacency matrix $X$ of a random $(n,n,d,d)$-bipartite biregular graph as the adjacency matrix of a random $d$-regular digraph. Therefore the following corollary holds.
\begin{corollary}\label{cor:cor1}
 Let $A$ be the adjacency matrix of a uniform random $d$-regular digraph on $n$ vertices. For any  $ K>0$, if  $d\leq\frac{1}{2} n^{2/3}$, then there exists a constant $\alpha>0$ depending only on $K$ such that 
\begin{align}
    \mathbb P\left(\sigma_2(A)\leq \alpha \sqrt{d}\right)\geq 1-n^{-K}-e^{-n}.
\end{align}
\end{corollary}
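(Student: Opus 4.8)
\emph{Proof proposal for Corollary~\ref{cor:cor1}.} The plan is to deduce the corollary directly from Theorem~\ref{thm:main1}, the only work being to match the two random models. Encode a $d$-regular digraph on $[n]$ by its adjacency matrix $B\in\{0,1\}^{n\times n}$, so that $B_{uv}=1$ precisely when there is a directed edge from $u$ to $v$; $d$-regularity of the digraph is equivalent to all row sums and all column sums of $B$ being equal to $d$ (self-loops, i.e.\ $B_{uu}=1$, are allowed, consistent with the model of \cite{cook2017discrepancy}). On the other hand, the biadjacency matrix $X\in\{0,1\}^{n\times n}$ of an $(n,n,d,d)$-bipartite biregular graph is, by definition, an $n\times n$ $0$-$1$ matrix all of whose row sums and column sums equal $d$. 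Thus $B\mapsto X:=B$ is a bijection between $d$-regular digraphs on $[n]$ and $(n,n,d,d)$-bipartite biregular graphs that carries the uniform law to the uniform law; I would use this to identify a uniform random $d$-regular digraph with (the biadjacency matrix of) a uniform random $(n,n,d,d)$-bipartite biregular graph.

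Next, form the symmetric matrix $A=\begin{bmatrix}0&X\\X^{\top}&0\end{bmatrix}$ as in \eqref{eq:A} from this $X$. Since $n=m$, the nonzero eigenvalues of $A$ are exactly $\pm\sigma_i(X)$ for $1\le i\le n$, so $\lambda_2(A)=\sigma_2(X)=\sigma_2(B)$, the second singular value of the digraph adjacency matrix. Now apply Theorem~\ref{thm:main1} with $d_1=d_2=d$ and $m=n$: the requirement $d_1\ge d_2$ holds with equality, and the requirement $d_2\le\frac12 n^{2/3}$ is precisely the hypothesis $d\le\frac12 n^{2/3}$. Theorem~\ref{thm:main1} then yields a constant $\alpha=\alpha(K)>0$ with
\[
\mathbb P\bigl(\sigma_2(B)\le\alpha\sqrt{d}\,\bigr)=\mathbb P\bigl(\lambda_2(A)\le\alpha\sqrt{d}\,\bigr)\ge 1-n^{-K}-e^{-n},
\]
which is exactly the claimed bound.

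There is no substantive obstacle here: everything rests on Theorem~\ref{thm:main1}, and the only points requiring (routine) care are the equivalence of the two uniform models in the first paragraph --- in particular the treatment of self-loops --- and the elementary observation that for $n=m$ the second singular value of $X$ coincides with $\lambda_2(A)$. Should one want the full range $1\le d\le n/2$ asserted for the conjecture of \cite{cook2017discrepancy}, the above must be supplemented by the complementary estimate of \cite{tikhomirov2019spectral}, but that lies beyond Corollary~\ref{cor:cor1} as stated.
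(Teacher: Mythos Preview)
Your proposal is correct and mirrors the paper's own argument: the paper simply remarks that the biadjacency matrix of a uniform random $(n,n,d,d)$-bipartite biregular graph can be read as the adjacency matrix of a uniform random $d$-regular digraph, and then invokes Theorem~\ref{thm:main1} with $d_1=d_2=d$ and $m=n$. Your write-up just spells out the routine checks (the bijection of models, the identity $\lambda_2(A)=\sigma_2(X)$) that the paper leaves implicit.
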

For fixed $d$, it was proved in \cite{brito2018spectral} that $\sigma_2(A)\leq 2\sqrt{d-1}+o(1)$ with high probability. 
In \cite{tikhomirov2019spectral} the authors showed $\sigma_2(A)=O(\sqrt{d})$ with high probability when $n^{\epsilon}\leq d\leq \frac{n}{2}$. Combining Corollary \ref{cor:cor1} and their result, we confirm a conjecture in \cite{cook2017discrepancy} that a uniform $d$-regular digraph has  $\sigma_2(A)=O(\sqrt{d})$ for $1\leq d\leq n/2$ with high probability.  Although it  was not stated in \cite{cook2017discrepancy}, analogous to Vu's conjecture \cite{vu2008random,vu2014combinatorial}, a more precise version of the conjecture  can be formulated as  $\sigma_2(A)\leq (2+o(1))\sqrt{d-1}$ for $1\ll  d\leq n/2$.

Order the eigenvalue of  $A$ in modulus as $|\lambda_1(A)|\geq |\lambda_2(A)|\geq \cdots \geq |\lambda_n(A)|$.
It is also shown in \cite{coste2017spectral} that for random $d$-regular digraph,  $|\lambda_2(A)|\leq \sqrt{d}+\epsilon$ with high probability for any $\epsilon>0$. Note that for  a $d$-regular digraph, $\lambda_1(A)=\sigma_1(A)=d$.  By  Weyl's inequality between eigenvalues and singular values \cite[Theorem 3.3.2]{horn_johnson_1991}), we have 
$
   |\lambda_1(A)\lambda_2(A)|\leq \sigma_1(A)\sigma_2(A),
$
and $|\lambda_2(A)|\leq \sigma_2(A)$  for any $d$-regular digraph. Therefore, Corollary \ref{cor:cor1} and  \cite[Theorem B]{tikhomirov2019spectral} together also imply $|\lambda_2(A)|=O(\sqrt{d})$ with high probability. Very recently,  for the permutation model of random regular digraphs, it was proved in \cite{coste2022characteristic} that $|\lambda_2(A)|\leq (1+\varepsilon)\sqrt{d}$ when $1\leq d\leq n^{o(1)}$.

Now we assume $d_2$ is a bounded constant and $d_1$ could  grow with $n$, we refine our estimate  as follows. 
\begin{theorem}\label{thm:main2} Let $A$ be the adjacency matrix of a random $(n,m,d_1,d_2)$-bipartite biregular graph with $d_1\geq d_2$. Let $d_2$ be a fixed constant independent of $n$, and $n\geq 4d_2$.
For any constants $K, C_1>0$, if $d_1\leq C_1n^2$, there exists $\alpha>0$ depending only on $C_1, d_2,  K$ such that 
\begin{align}\label{eq:nontrivial}
    \mathbb P\left(\max_{2\leq i\leq m+n-1}|\lambda_i^2 (A)-d_1|\leq \alpha \sqrt{d_1}\right)\geq 1-n^{-K}-e^{-n}.
\end{align}
\end{theorem}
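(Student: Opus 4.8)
The plan is to prove Theorem~\ref{thm:main2} by exploiting the fact that when $d_2$ is fixed, the relevant object is the Gram matrix $X^{\top}X$ (an $m\times m$ matrix), whose off-diagonal entry $(X^{\top}X)_{vv'}$ is the \emph{codegree} of $v,v'\in V_2$, i.e. the number of common neighbors in $V_1$. Write $X^{\top}X = d_1 I_m + M$, where $M$ is the symmetric matrix with zero diagonal and $M_{vv'}=\codeg(v,v')$. Then for $2\le i\le m+n-1$ the quantity $\lambda_i^2(A)$ is either $0$ (the $|n-m|$ trivial zero eigenvalues, for which $|\lambda_i^2-d_1|=d_1$ — so we actually want to read the claim as being about the nontrivial part, i.e. the nonzero eigenvalues of $X^{\top}X$) or an eigenvalue of $d_1I_m+M$ other than the Perron eigenvalue $d_1d_2$. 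Hence it suffices to show that $M$ restricted to the orthogonal complement of $\mathbf{1}_m$ has operator norm $O(\sqrt{d_1})$ with the stated probability; equivalently $\|M - \frac{d_2-1}{m-1}(J_m-I_m)\cdot(\text{something})\|$... more precisely, since $\mathbf 1_m$ is the top eigenvector and $M\mathbf 1_m=(d_2-1)d_1\cdot$(no) — let me just say: $M\mathbf 1_m = (d_1d_2 - d_1)\mathbf 1_m$, so the non-Perron eigenvalues of $d_1 I+M$ are exactly $d_1+\mu$ where $\mu$ ranges over eigenvalues of $M$ on $\mathbf 1_m^{\perp}$, and we must bound $\max_\mu|\mu|=\|M\|_{\mathbf 1_m^\perp}\le \alpha\sqrt{d_1}$.

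The key steps, in order: (1) Reduce, as above, to a bound on $\|P M P\|$ where $P=I_m-\frac1m J_m$ is the projection onto $\mathbf 1_m^\perp$. (2) Observe that each codegree $\codeg(v,v')$ has mean of order $d_1 d_2^2/n = O(d_1/n)$ and one expects it to concentrate; the matrix $M$ is an $m\times m$ matrix of correlated, heavy-ish entries, and the target $\sqrt{d_1}$ is (up to constants, since $m=nd_1/d_2$) of order $\sqrt{d_2 m / n}\cdot\sqrt{n}$... i.e. it is a genuinely nontrivial concentration statement, not a trivial norm bound. (3) Apply the size-biased coupling machinery of~\cite{cook2018size} together with the new switching operations the paper introduces: set up, for the linear statistic $\mathbf x^{\top} M \mathbf x$ with a fixed unit vector $\mathbf x\perp\mathbf 1_m$, a size-biased coupling of the (weighted) codegree count, controlled via a switching that adds or removes a single edge-pair in a way that changes a prescribed codegree by exactly one while keeping all degrees fixed; this yields exponential concentration of $\mathbf x^\top M\mathbf x$ around its mean $\Theta(d_1 d_2^2/n)\cdot(\text{small})$ with Gaussian-then-Poissonian tails at scale $\sqrt{d_1}$. (4) Upgrade the one-vector bound to a uniform-over-$\mathbf x$ bound by an $\epsilon$-net argument on the unit sphere of $\mathbf 1_m^\perp$ (dimension $m-1$), absorbing the $e^{m}$-size net into the exponential tail — this is exactly where the probability $1-n^{-K}-e^{-n}$ and the constraint $n\ge 4d_2$ enter, and why $d_1\le C_1 n^2$ (equivalently $m\le C_1' n^3/d_2$, keeping $\log m = O(n)$... actually $\log m=O(\log n)$, so the net is cheap; the $n^2$ bound is needed so that the per-entry fluctuations stay controlled relative to $\sqrt{d_1}$).

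The main obstacle I expect is step (3): designing a switching operation for bipartite biregular graphs that moves a single codegree up or down by one without disturbing any vertex degree, and showing it has the combinatorial regularity needed to drive a size-biased coupling with the right error terms. In the $d$-regular (non-bipartite) case of~\cite{cook2018size} the switchings act on the edge set directly; here the natural statistic lives on pairs of right-vertices, so one likely needs a two-edge or "simple switch" that swaps the endpoints of two edges $\{u_1 v\},\{u_2 v'\}$ to $\{u_1 v'\},\{u_2 v\}$, which changes $\codeg(v,v')$ and a few neighboring codegrees in a controlled way; bounding the number of obstructions to performing such a switch (i.e. the cases where the swap would create a multi-edge) and showing the resulting coupling error is $o(\sqrt{d_1})$ uniformly is the crux. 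A secondary difficulty is handling the quadratic form $\mathbf x^\top M\mathbf x$ rather than a linear statistic: I would decompose it into a diagonal-free sum $\sum_{v\ne v'} x_v x_{v'}\codeg(v,v')$ and treat it via the coupling applied to the integer-valued count $\sum_{v\ne v'}\lceil c\,x_vx_{v'}\rceil \codeg(v,v')$ after an appropriate discretization, or alternatively bound $\|M\|_{\mathbf 1^\perp}$ through the trace method / moment method on $M$ if the switching route proves too delicate — but the size-biased route is the one the abstract advertises, so that is the plan.
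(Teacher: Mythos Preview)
Your reduction to a Gram matrix is the right idea, but you chose the wrong side of the bipartition, and this makes the $\epsilon$-net step break down. Since $nd_1=md_2$ and $d_1\ge d_2$, we have $n\le m$; under the hypothesis $d_1\le C_1 n^2$ one can have $m=nd_1/d_2$ as large as $C_1 n^{3}/d_2$. Your net on the unit sphere of $\mathbf 1_m^{\perp}$ therefore has size $e^{\Theta(m)}=e^{\Theta(n^{3})}$, not $e^{O(\log m)}$ as your parenthetical suggests. Meanwhile the size-biased coupling concentration for a single quadratic form $\mathbf x^{\top}M\mathbf x$ gives a tail of order $\exp(-c\beta^{2}n/d_2)$ (equivalently $\exp(-c\beta^{2}m/d_1)$, since $m/d_1=n/d_2$), which cannot absorb $e^{\Theta(m)}$ unless $\beta^{2}\gtrsim d_1$; that in turn yields only $|\lambda_i^2-d_1|\lesssim d_1$, which is trivial. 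The paper works instead with the $n\times n$ matrix $M=XX^{\top}-d_1 I_n$, takes the net on $S_0^{n-1}$ of size $9^{n}$, and the $\exp(-cn/d_2)$ concentration beats it with $\beta$ a constant depending only on $d_2$. (A side issue: $(X^{\top}X)_{vv}=d_2$, not $d_1$, so on the $m$-side the centering is $d_2 I_m$, and $X^{\top}X-d_2 I_m$ has an $(m-n)$-fold eigenvalue at $-d_2$ coming from the kernel of $X$; this further complicates your route.)

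Two other gaps, smaller but still substantive. First, you omit the Kahn--Szemer\'edi light/heavy split: the size-biased coupling does not give concentration of $\langle x,Mx\rangle$ strong enough to beat a net directly; one must restrict to the ``light'' pairs $\{(u_1,u_2):|x_{u_1}x_{u_2}|\le\sqrt{d_1(d_2-1)}/n\}$, use the coupling there, and control the heavy pairs separately via a discrepancy property for the codegree matrix $M$ (Lemmas~\ref{lem:DPproperty11} and~\ref{lem:a0} in the paper). Second, the required switching is not a simple two-edge swap: to build a size-biased coupling for $g_Q(X)=\sum_{u_1\neq u_2}Q_{u_1u_2}\,\mathrm{codeg}(X,u_1,u_2)$ one needs a coupling of $X$ with $X$ conditioned on $X_{u_1v_1}X_{u_2v_1}=1$, and the paper constructs three types of higher-order switchings (Definitions~\ref{def:12}--\ref{def:type3}, on $7$ or $11$ vertices) together with a weighted bipartite graph on configuration space to achieve this with the correct regularity; the change in $g_Q$ under one such switching is bounded by $O(d_2 a)$, which is where the fixed-$d_2$ assumption enters the variance bound.
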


We see from \eqref{eq:nontrivial} that the absolute values of all nontrivial eigenvalues of $A$ are concentrated around $ \sqrt{d_1}$, which is an improved estimate compared to Theorem \ref{thm:main1}. Theorem \ref{thm:main2} is also used in \cite{dumitriu2020global} to study the global eigenvalue fluctuation of random bipartite biregular graphs. We believe the conditions on $d_1,d_2$ are only  technical assumptions due to the limitation of the method we used. It's possible that by other methods one can extend the range of $d_1,d_2$ such that \eqref{eq:nontrivial} still holds.

\section{Size biased coupling}\label{sec:Sizecoupling}

A key ingredient in the proof of our main results  is to show the concentration of linear functions for  a random matrix. In this section, we collect some useful results on size biased coupling. For more background on size biased coupling, see the survey \cite{arratia2019size} and Section 3 in \cite{cook2018size}.

Let $X$ be a nonnegative random variable with  $\mu=\mathbb EX>0$. We say \textit{$X^s$ has the $X$-size biased distribution} if 
$
    \mathbb E[Xf(X)]=\mu \mathbb E[f(X^s)]
$
for all functions $f$ such that the left hand side above exists. We say a pair of random variable $(X, X^s)$ defined on a common probability space is a \textit{size biased coupling} for $X$ when $X^s$ has the $X$-size biased distribution. The following lemma for the sum of indicator random variables will be convenient in our setting. 

\begin{lemma}[Lemma 3.1 in \cite{cook2018size}]\label{lem:biasindicator}
Let $X_i=a_iF_i$ where $F_i$ is a non-constant random variable taking values in $\{0,1\}$ and $a_i\geq 0$. Let $X=\sum_{i=1}^n X_i.$ Let $(X_1^{(i)},\dots, X_n^{(i)})$ be random variables such that $X_i^{(i)}=a_i$ and $(X_j^{(i)})_{j\not=i}$ are  distributed as $(X_j^{(i)})_{j\not=i}$ conditioned on $F_i=1$. Independent of everything else, choose a random index $I$ such that $\mathbb P(I=i)=\mathbb EX_i/\mathbb EX$ for $1\leq i\leq n$. Then $X^{s}=\sum_{i=1}^n X_i^{(I)}$ has the size biased distribution of $X$.
\end{lemma}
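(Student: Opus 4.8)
The plan is to verify the defining identity of the size-biased law directly: I would show that $\mathbb{E}[Xf(X)] = \mu\,\mathbb{E}[f(X^s)]$ for every test function $f$ for which the left-hand side exists, where $\mu = \mathbb{E}X = \sum_{i=1}^n \mathbb{E}X_i = \sum_{i=1}^n a_i\,\mathbb{P}(F_i=1) > 0$. The whole argument is a chain of elementary manipulations; the only real work is lining up the coupling correctly, so I would organize it in three short moves.

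First I would linearise. Since $X = \sum_{i=1}^n X_i = \sum_{i=1}^n a_i F_i$, we get $\mathbb{E}[Xf(X)] = \sum_{i=1}^n a_i\,\mathbb{E}[F_i f(X)]$. For each index $i$ with $a_i>0$ (indices with $a_i=0$ contribute nothing on this side, and correspondingly get $\mathbb{P}(I=i)=0$, so they may be discarded), the fact that $F_i \in \{0,1\}$ lets me write $\mathbb{E}[F_i f(X)] = \mathbb{P}(F_i=1)\,\mathbb{E}[f(X)\mid F_i=1]$; here non-constancy of $F_i$ guarantees $\mathbb{P}(F_i=1)\in(0,1)$, so the conditioning is legitimate.

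Next comes the structural step. Conditioned on $F_i=1$ we have $X = a_i + \sum_{j\neq i} a_j F_j$, and by construction $(X_j^{(i)})_{j\neq i}$ has exactly the law of $(X_j)_{j\neq i}$ conditioned on $F_i=1$, while $X_i^{(i)} = a_i$. Hence $X$ conditioned on $F_i=1$ has the same law as $X^{(i)} := \sum_{j=1}^n X_j^{(i)}$, so $\mathbb{E}[f(X)\mid F_i=1] = \mathbb{E}[f(X^{(i)})]$. Combining with the previous display and using $\mathbb{E}X_i = a_i\,\mathbb{P}(F_i=1)$, I obtain $\mathbb{E}[Xf(X)] = \sum_{i=1}^n (\mathbb{E}X_i)\,\mathbb{E}[f(X^{(i)})]$.

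Finally I would unfold $X^s$ through the independent index $I$: since $I$ is independent of the families $(X_j^{(i)})_j$ and $\mathbb{P}(I=i) = \mathbb{E}X_i/\mu$, conditioning on $I$ gives $\mathbb{E}[f(X^s)] = \sum_{i=1}^n \mathbb{P}(I=i)\,\mathbb{E}[f(X^{(i)})] = \tfrac{1}{\mu}\sum_{i=1}^n (\mathbb{E}X_i)\,\mathbb{E}[f(X^{(i)})] = \tfrac{1}{\mu}\,\mathbb{E}[Xf(X)]$, which rearranges to the claim. I do not expect a genuine obstacle here; the one point that needs care is the bookkeeping around degenerate indices ($a_i=0$, or more generally $\mathbb{E}X_i=0$) and the well-definedness of the conditional expectations — the latter is exactly what the non-constancy hypothesis on the $F_i$ buys, and the former is harmless because such terms vanish from both sides simultaneously.
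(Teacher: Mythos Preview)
Your argument is correct and is exactly the standard verification of the size-biased identity via linearisation and conditioning on $F_i=1$. Note that the paper itself does not supply a proof of this lemma at all---it is quoted verbatim from \cite{cook2018size}---so there is no ``paper's proof'' to compare against; your write-up is precisely the kind of short proof one finds in the original source.
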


The following result in \cite{cook2018size}  provides concentration inequalities from the construction of size biased couplings. For $x\in \mathbb R$, define $(x)_+:=\max\{x,0\}$.

\begin{lemma}[Theorem 3.4. in \cite{cook2018size}]\label{thm:tailestimate1}
 Let $(X,X^s)$ be a size biased coupling with $\mathbb EX=\mu$, $\mathcal B$ be an event on which $X^s-X\leq c$. Let $D=(X^s-X)_+$ and suppose $\mathbb E[D\mathbf{1}_{\mathcal B}\mid X]\leq \tau^2/\mu$ almost surely. Define $
     h(x)=(1+x)\log(1+x)-x ,\quad x\geq -1.
 $
 Then the following holds:
 \begin{enumerate}
     \item If $\mathbb P[\mathcal B\mid X^s]\geq p$ almost surely, then for $x\geq 0$,
     $
         \mathbb P \left(X-\frac{\mu}{p}\geq x\right)\leq \exp\left( -\frac{\tau^2}{pc^2} h\left(\frac{pcx}{\tau^2} \right)\right).
    $
    \item If $\mathbb P[\mathcal B\mid X]\geq p$ almost surely, then for $x\geq 0$,
     $
         \mathbb P (X-p\mu\leq  -x)\leq \exp\left( -\frac{\tau^2}{c^2} h\left(\frac{cx}{\tau^2} \right)\right).
     $
 \end{enumerate}
\end{lemma}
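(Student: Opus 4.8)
The plan is to run the Cram\'er--Chernoff method on the moment generating function $m(\theta):=\mathbb{E}\,e^{\theta X}$, using the defining relation of the size biased distribution to turn it into a differential inequality for $\log m$, and then optimizing the resulting exponential bound in the manner of Bennett's inequality (the standard route for concentration from bounded size biased couplings). The starting point is the identity obtained by taking $f(x)=e^{\theta x}$ in $\mathbb{E}[Xf(X)]=\mu\,\mathbb{E}[f(X^s)]$, namely $m'(\theta)=\mu\,\mathbb{E}\,e^{\theta X^s}$, valid for every $\theta$ with $m(\theta)<\infty$ (in the intended applications $X$ is a bounded sum of indicators, so there is nothing to check). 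Everything then reduces to bounding $\mathbb{E}\,e^{\theta X^s}$ by a suitable multiple of $m(\theta)$: from above for $\theta>0$, giving part (1), and from below for $\theta<0$, giving part (2).

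For part (1) I fix $\theta>0$ and use the convexity inequality $e^{\theta y}\le 1+\tfrac{e^{\theta c}-1}{c}\,y$, valid for all $y\le c$. Applying it with $y=X^s-X$ on $\mathcal{B}$ and combining with $X^s-X\le D$ gives $e^{\theta X^s}\mathbf{1}_{\mathcal{B}}\le e^{\theta X}\bigl(1+\tfrac{e^{\theta c}-1}{c}D\mathbf{1}_{\mathcal{B}}\bigr)$; taking expectations, conditioning on $X$, and invoking $\mathbb{E}[D\mathbf{1}_{\mathcal{B}}\mid X]\le\tau^2/\mu$ bounds $\mathbb{E}[e^{\theta X^s}\mathbf{1}_{\mathcal{B}}]\le m(\theta)\bigl(1+\tfrac{\tau^2(e^{\theta c}-1)}{\mu c}\bigr)$. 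To pass from $\mathbb{E}[e^{\theta X^s}\mathbf{1}_{\mathcal{B}}]$ to $\mathbb{E}\,e^{\theta X^s}$ I use the third hypothesis: since $e^{\theta X^s}\ge0$, one has $\mathbb{E}[e^{\theta X^s}\mathbf{1}_{\mathcal{B}}]=\mathbb{E}[e^{\theta X^s}\,\mathbb{P}(\mathcal{B}\mid X^s)]\ge p\,\mathbb{E}\,e^{\theta X^s}$. Together with $m'(\theta)=\mu\,\mathbb{E}\,e^{\theta X^s}$ this yields $(\log m)'(\theta)\le\tfrac1p\bigl(\mu+\tfrac{\tau^2}{c}(e^{\theta c}-1)\bigr)$. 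Integrating from $0$ to $s$ with $m(0)=1$ gives $\log m(s)\le\tfrac{\mu s}{p}+\tfrac{\tau^2}{pc^2}(e^{sc}-1-sc)$, and then Markov's inequality $\mathbb{P}(X\ge\mu/p+x)\le e^{-s(\mu/p+x)}m(s)$ followed by optimization over $s>0$ — the optimal $s$ solves $sc=\log(1+pcx/\tau^2)$ — produces exactly $\exp\!\bigl(-\tfrac{\tau^2}{pc^2}h(\tfrac{pcx}{\tau^2})\bigr)$, with $h$ appearing as the Legendre transform of $t\mapsto e^{tc}-1-tc$ in the familiar Bennett-type computation.

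Part (2) runs along the same three steps with $\theta<0$: a lower estimate for $e^{\theta X^s}\mathbf{1}_{\mathcal{B}}$ from a tangent line of $t\mapsto e^{\theta t}$ (for instance $e^{\theta D}\ge e^{\theta c}(1-\theta c+\theta D)$, whose right-hand side is nonnegative once $0\le D\le c$, i.e.\ on $\mathcal{B}$), then conditioning on $X$ and using $\mathbb{P}(\mathcal{B}\mid X)\ge p$ together with $\mathbb{E}[D\mathbf{1}_{\mathcal{B}}\mid X]\le\tau^2/\mu$ to obtain a \emph{lower} differential inequality for $(\log m)'(\theta)$, then integration and $\mathbb{P}(X\le p\mu-x)\le e^{-\theta(p\mu-x)}m(\theta)$ optimized over $\theta<0$. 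I expect this lower-tail case to be the main obstacle: the hypotheses control $X^s-X$ only from above and only on $\mathcal{B}$, so one has to track carefully which error terms help and which hurt when $X^s-X$ is replaced by $D$ and $\mathbf{1}_{\mathcal{B}^c}$ is discarded, and how the shift $p\mu$ (rather than $\mu/p$) comes out. One useful cushion is that, since $h(t)\le t^2/2$ for $t\ge0$, it suffices to prove the weaker sub-Gaussian inequality $\mathbb{P}(X\le p\mu-x)\le\exp(-x^2/2\tau^2)$ in the moderate range: the crude bound $e^{\theta(X^s-X)}\ge 1+\theta(X^s-X)\ge 1+\theta D$ already gives $(\log m)'(\theta)\ge\mu p+\theta\tau^2$ when $|\theta|c\le1$, which integrates to $\log m(\theta)\le\mu p\,\theta+\tfrac{\tau^2\theta^2}{2}$ and hence to $\exp(-x^2/2\tau^2)$ for $x\le\tau^2/c$; recovering the full Bennett form for larger $x$ then amounts to carrying the exponential term $\tfrac{\tau^2}{c}(e^{\theta c}-1)$ through the integration as in part (1) with the inequalities reversed, and I expect the sign bookkeeping there to be where most of the remaining work lies.
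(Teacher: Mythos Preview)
The paper does not provide its own proof of this statement: it is quoted as Theorem~3.4 of \cite{cook2018size} and used as a black box throughout Sections~\ref{sec:concentration} and~\ref{sec:highconcentrate}. So there is no in-paper argument to compare against.

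That said, your approach is exactly the one used in the cited reference: the Cram\'er--Chernoff method applied to $m(\theta)=\mathbb{E}\,e^{\theta X}$ via the size-bias identity $m'(\theta)=\mu\,\mathbb{E}\,e^{\theta X^s}$, turned into a differential inequality for $\log m$ and then optimized \`a la Bennett. Your part~(1) is essentially complete and correct; the chord bound $e^{\theta y}\le 1+\tfrac{e^{\theta c}-1}{c}y$ on $(-\infty,c]$, conditioning on $X$ to invoke $\mathbb{E}[D\mathbf{1}_{\mathcal B}\mid X]\le\tau^2/\mu$, and the use of $\mathbb{P}(\mathcal B\mid X^s)\ge p$ to pass from $\mathbb{E}[e^{\theta X^s}\mathbf{1}_{\mathcal B}]$ to $p\,\mathbb{E}\,e^{\theta X^s}$ are precisely the three moves in \cite{cook2018size}. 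For part~(2) you correctly identify that one now needs a \emph{lower} bound on $\mathbb{E}\,e^{\theta X^s}$ for $\theta<0$, and that the roles of the two probability hypotheses swap: discarding $\mathcal B^c$ already goes the right way (since $e^{\theta X^s}\ge e^{\theta X^s}\mathbf{1}_{\mathcal B}$), while $\mathbb{P}(\mathcal B\mid X)\ge p$ is what produces the centering $p\mu$. Your tangent-line bound $e^{\theta D}\ge 1+\theta D$ (or the sharper variant anchored at $D=c$) is the right tool; the remaining bookkeeping in \cite{cook2018size} follows this route without further ideas, so there is no genuine gap in your plan.
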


\section{Switching for bipartite biregular graphs}\label{sec:switching}

The method of switchings, developed by McKay and Wormald \cite{mckay1981expected,wormald1999models}, has been used to approximately enumerate regular graphs, counting subgraphs in random regular graphs, see for example \cite{mckay1981subgraphs,mckay2004short,kim2007small,krivelevich2001random2}.  In recent years, combined with other random matrix  techniques, switching has become a useful tool to study the spectra of random regular graphs  \cite{johnson2015exchangeable, cook2018size,bauerschmidt2017local,bauerschmidt2017bulk,bauerschmidt2019local,bauerschmidt2020edge}. It was also applied to spectral analysis of other random graph models including  random regular digraphs \cite{cook2017discrepancy,cook2017singularity,cook2019circular,litvak2019structure,litvak2017adjacency,litvak2019smallest,litvak2018circular} and random bipartite biregular graphs \cite{yang2017local,yang2017bulk}. 
The switching operation defines a natural Markov chain called the “switch chain”, which is often used to sample random graphs. In \cite{tikhomirov2020sharp}, the authors derived Poincar\'e inequalities for the switch chain on $d$-regular bipartite graphs when $3\leq d\leq cn$. It is known that such functional inequalities imply corresponding concentration inequalities \cite{ledoux2001concentration}, which provides a possible approach to study the spectral gap of uniform random regular bipartite graphs.

In this section, we introduce the switching operations on bipartite biregular graphs, which are different from \cite{yang2017local} and  involve more vertices.  Our definition of switchings is an analog of  the ``double switchings" defined for regular graphs in Section 4 of \cite{cook2018size}, and is suitable for bipartite biregular graphs.  
The switchings will be used to construct a  coupling between $X$ and $X^{(u_1v_1)}$, where $X$ is the biadjacency matrix of a random bipartite biregular graph and $X^{(u_1v_1)}$ is the distribution of $X$ conditioned on $X_{u_1v_1}=1$.

\begin{definition}[valid switchings]\label{def:switch} Assume $X_{u_1v_2}=X_{u_2v_1}=X_{u_3v_3}=1$ and $X_{u_1v_1}=X_{u_2v_3}=X_{u_3v_2}=0$. We define  $(u_1,u_2,u_3,v_1,v_2,v_3)$ to be a \textit{valid forward switching} for $X$ as follows. After switching in the graph $G$, the edges $u_1v_1,u_2v_3,u_3v_2$ are added and the edges $u_1v_2,u_2v_1,u_3v_3$ are removed.  
In a similar way, suppose   $X_{u_1v_2}=X_{u_2v_1}=X_{u_3v_3}=0$ and $X_{u_1v_1}=X_{u_2v_3}=X_{u_3v_2}=1$, define  $(u_1,u_2,u_3,v_1,v_2,v_3)$ to be a \textit{valid backward switching} for $X$ if  after switching, the edges $u_1v_2,u_2v_1,u_3v_3$  are added and the edges  $u_1v_1,u_2v_3,u_3v_2$ are removed.  In both of the forward and backward switchings, we assume $u_1,u_2,u_3,v_1,v_2,v_3$ are distinct vertices. 
\end{definition}

See Figure \ref{fig:my_label1} for an example for a valid forward switching. By reversing the arrow in Figure \ref{fig:my_label1}, we obtain a valid backward switching from the right to the left.
 The following lemma estimates the number of valid forward and backward switchings.
\begin{figure}[ht]
    \centering
    \includegraphics[width=0.5\linewidth]{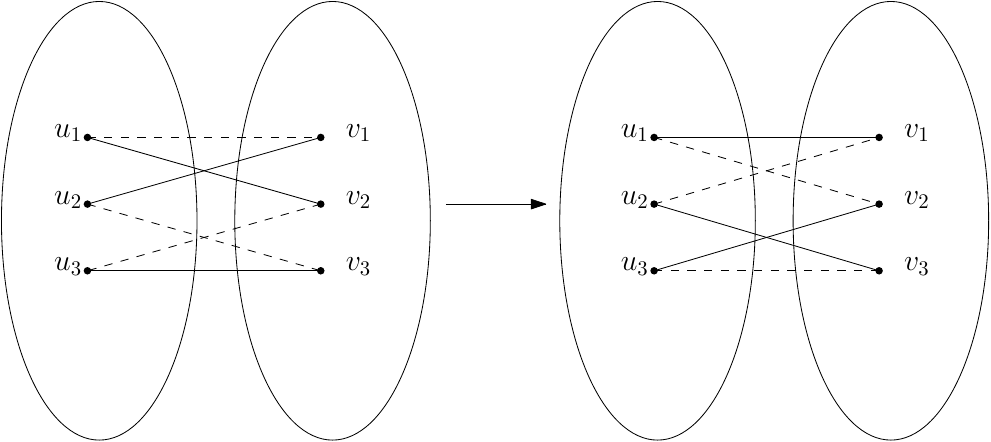}
    \caption{a valid forward switching}
    \label{fig:my_label1}
\end{figure}

\begin{lemma} \label{lem:switchingcounts}
 Let $s_{u_1v_1}(G)$ and $t_{u_1u_1}(G)$ be the number of valid  forward and backward switchings of the form $(u_1,\cdot,\cdot,v_1,\cdot,\cdot)$, respectively. Then the following inequalities hold:
\begin{enumerate}
    \item If $X_{u_1v_1}=0$, 
    $
        d_1^2d_2(n-2d_2)\leq s_{u_1v_1}(G)\leq d_1^2d_2(n-d_2).
    $
    \item If $X_{u_1v_1}=1$, 
    $
        d_1^2(n-d_2)(n-2d_2)\leq t_{u_1v_1}(G)\leq d_1^2(n-d_2)^2.
    $
\end{enumerate}
\end{lemma}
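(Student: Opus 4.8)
The plan is to reduce each of the two counts to enumerating edges (for part (1)) or ordered pairs of edges (for part (2)) of $G$ subject to a few non-adjacency constraints, and then to estimate those edge counts by a single round of inclusion--exclusion, using the biregularity identity $|E|=nd_1=md_2$. Throughout, for a vertex $w$ write $N(w)$ for its neighbourhood. The one observation that keeps the bookkeeping clean is that, once the relevant non-adjacency constraints are imposed, the requirement in Definition~\ref{def:switch} that the six vertices $u_1,u_2,u_3,v_1,v_2,v_3$ be distinct becomes automatic.

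\textit{Part (1).} Fix $u_1\in V_1$ and $v_1\in V_2$ with $X_{u_1v_1}=0$. A forward switching of the form $(u_1,\cdot,\cdot,v_1,\cdot,\cdot)$ is obtained by choosing $v_2\in N(u_1)$ (there are $d_1$ such, automatically $\neq v_1$), then $u_2\in N(v_1)$ ($d_2$ such, automatically $\neq u_1$), then an edge $(u_3,v_3)$ with $u_3\notin N(v_2)$ and $v_3\notin N(u_2)$. From these constraints all six distinctness relations follow: $u_1\in N(v_2)$ and $u_3\notin N(v_2)$ give $u_3\neq u_1$; $v_1\in N(u_2)$ and $v_3\notin N(u_2)$ give $v_3\neq v_1$; if $u_3=u_2$ then $v_3\in N(u_3)=N(u_2)$, contradicting $v_3\notin N(u_2)$, so $u_3\neq u_2$, and symmetrically $v_2\neq v_3$; finally $u_1\neq u_2$ and $v_1\neq v_2$ hold because $u_1\notin N(v_1)$. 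Therefore
\[
 s_{u_1v_1}(G)=\sum_{v_2\in N(u_1)}\ \sum_{u_2\in N(v_1)}\bigl|\{(u_3,v_3)\in E:\ u_3\notin N(v_2),\ v_3\notin N(u_2)\}\bigr|.
\]
Exactly $d_1d_2$ edges have their $V_1$-endpoint in $N(v_2)$ (there are $d_2$ such endpoints, each incident to $d_1$ edges), exactly $d_1d_2$ edges have their $V_2$-endpoint in $N(u_2)$, and the number of edges counted in both is between $0$ and $d_1d_2$; inclusion--exclusion then places the inner cardinality in $[\,d_1(n-2d_2),\,d_1(n-d_2)\,]$, and summing over the $d_1d_2$ pairs $(v_2,u_2)$ gives part (1).

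\textit{Part (2).} Fix $u_1\in V_1$ and $v_1\in V_2$ with $X_{u_1v_1}=1$. A backward switching $(u_1,\cdot,\cdot,v_1,\cdot,\cdot)$ is precisely an ordered pair of edges $e=(u_2,v_3)$ and $f=(u_3,v_2)$ of $G$ satisfying $u_2\notin N(v_1)$, $v_2\notin N(u_1)$, and $u_3\notin N(v_3)$; again these three constraints force the six vertices to be distinct (for instance $v_2=v_3$ would give $u_3\in N(v_2)=N(v_3)$, contradicting $u_3\notin N(v_3)$, and the rest are checked as in Part (1)). The number of admissible $e$ is $(n-d_2)d_1$, and, using $md_2=nd_1$, the number of admissible $f$ is $(m-d_1)d_2=d_1(n-d_2)$; this is where the apparent factor $m$ disappears. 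Dropping the constraint $u_3\notin N(v_3)$ gives $t_{u_1v_1}(G)\le d_1^2(n-d_2)^2$. For the lower bound, for each admissible $e$ there are at most $d_2$ choices of $u_3\in N(v_3)$ and then at most $d_1$ of $v_2$, so at most $d_1^2d_2(n-d_2)$ of the pairs violate $u_3\notin N(v_3)$; subtracting yields $t_{u_1v_1}(G)\ge d_1^2(n-d_2)^2-d_1^2d_2(n-d_2)=d_1^2(n-d_2)(n-2d_2)$.

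The only delicate points are verifying that distinctness of the six vertices is free once the non-adjacency constraints are present, and arranging the count in Part (2) so that $m$ is eliminated through $nd_1=md_2$; everything else is elementary counting, so I do not expect a serious obstacle.
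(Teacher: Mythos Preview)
Your proof is correct and follows essentially the same approach as the paper: enumerate the free vertices/edges of the switching and then use a single subtraction (inclusion--exclusion) for the one remaining non-adjacency constraint, with $nd_1=md_2$ absorbing the factor $m$ in part~(2). You are in fact more careful than the paper in explicitly verifying that the six-vertex distinctness required by Definition~\ref{def:switch} is automatic once the non-adjacency constraints hold, a point the paper's proof leaves implicit.
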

\begin{proof}
Let $\N(v)$ be the set of neighborhood of  a vertex $v$ in the graph $G$. Define $\overline{\N}(v)=V_2 \setminus \N(v)$ if $v\in V_1$, and $\overline{\N}(v)=V_1\setminus \N(v)$ if $v\in V_2$.   Fix $u_1,v_1$ and assume $X_{u_1v_1}=0$. By choosing $u_2\in\N(v_1),v_2\in \N(u_1), u_3\in \overline{\N}(v_2),v_3\in \N(u_3),$ we have $d_1^2d_2(n-d_2)$ many tuples, which gives the upper bound on $s_{u_1v_1}(G)$. Among those $d_1^2d_2(n-d_2)$ many tuples, a tuple is a valid forward switching if and only if $v_3\in\overline{\N}(u_2)$. By choosing $u_2\in\N(v_1),v_2\in\N(u_1), v_3\in \N(u_2),u_3\in \N(v_3)$, we have at most $d_1^2d_2^2$ many tuples that are not valid forward switchings. Therefore
$ s_{u_1v_1}(G)\geq d_1^2d_2(n-d_2)-d_1^2d_2^2=d_1^2d_2(n-2d_2).$
This completes the proof for the first claim.
Now assume $X_{u_1v_1}=1$. By choosing $u_2\in\overline{\N}(v_1),v_3\in \N(u_2),u_3\in\overline{\N}(v_3),v_2\in \N(u_3)$, we have $d_1^2(n-d_2)^2$ many tuples, giving the upper bound of $t_{u_1v_1}(G)$. Among those tuples, a tuple is not a valid backward switching if and only if $v_2\in \N(u_1)$. We can  bound the number of invalid tuples by choosing $v_2\in \N(u_1), u_2\in \overline{\N}(v_1), v_3\in \N(u_2), u_3\in \N(v_2)$, which has at most  $d_1^2d_2(n-d_2)$ many. Therefore the second claim holds.
\end{proof}

Let $\mathcal G$ be the collection of the biadjacency matrices of all $(n,m,d_1,d_2)$-bipartite biregular graphs. For fixed $u_1\in [n],v_1\in [m]$, let $\mathcal G_{u_1v_1}$ be the subset of $\mathcal G$ such that $X_{u_1v_1}=1$. We construct an edge-weighted bipartite graph $\mathfrak G_0$ on two vertex class $\mathcal G$ and $\mathcal G_{u_1v_1}$ as follows:
\begin{itemize}
    \item  If $X\in \mathcal G$ with $X_{u_1v_1}=0$,  form an edge of weight $1$ between $X$ and every element of $\mathcal G_{u_1v_1}$ that is a result of a valid forward switching from $X$.
    \item   If $X\in \mathcal G$ with $X_{u_1v_1}=1$,   form an edge of weight $d_1^2d_2(n-d_2)$ between $X$ and its identical copy in $\mathcal G_{u_1v_1}$.
\end{itemize}

Define an edge-weighted bipartite graph with two disjoint vertex sets $V_1,V_2$ to be a \textit{$(w_1,w_2)$-biregular graph} if the  degree (sum of weights from adjacent edges) of each vertex in $V_1$ is $w_1$ and the degree of each  vertex in $V_2$ is $w_2$. The following lemma shows we can embed (allowing edge weight to increase) $\mathfrak G_0$ into a weighted bipartite biregular graph.

\begin{lemma}\label{lem:G_0coupling1}
In $\mathfrak G_0$, the following holds:
\begin{enumerate}
    \item Every vertex in $\mathcal G$ has degree between $d_1^2d_2(n-2d_2)$ and  $d_1^2d_2(n-d_2)$. 
    \item Every vertex in $\mathcal G_{u_1v_1}$ has degree between $d_1^2(n-d_2)^2$ and $d_1^2n(n-d_2)$.
    \item  $\mathfrak G_0$ can be embedded into a weighted bipartite biregular graph $\mathfrak G$ on the same vertex sets, with vertices in $\mathcal G$ having degree $d_1^2d_2(n-d_2)$ and vertices in $\mathcal G_{u_1v_1}$ having degree $d_1^2n(n-d_2)$. 
\end{enumerate}  
\end{lemma}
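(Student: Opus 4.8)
The plan is to read off the degrees of $\mathfrak{G}_0$ directly from Lemma~\ref{lem:switchingcounts} for parts (1) and (2), and then to build the biregular overgraph $\mathfrak{G}$ in part (3) by a short transportation argument after checking a global weight-balance identity. For part (1): if $X_{u_1v_1}=1$ then $X$ is joined in $\mathfrak{G}_0$ only to its identical copy in $\mathcal{G}_{u_1v_1}$, by an edge of weight $d_1^2d_2(n-d_2)$, so $\deg_{\mathfrak{G}_0}(X)=d_1^2d_2(n-d_2)$, which lies in the asserted range. If $X_{u_1v_1}=0$ then $\deg_{\mathfrak{G}_0}(X)$ is the number of \emph{distinct} elements of $\mathcal{G}_{u_1v_1}$ reachable from $X$ by a valid forward switching, and the point to check is that this count equals the number $s_{u_1v_1}(G)$ of valid forward switching tuples. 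Given $X$ and the resulting graph $X''$, the set of added edges is $\{u_1v_1,u_2v_3,u_3v_2\}$ and the set of removed edges is $\{u_1v_2,u_2v_1,u_3v_3\}$ (six genuinely distinct edge slots, since the six vertices are distinct and the corresponding entries of $X$ are forced by Definition~\ref{def:switch}), and from these one recovers in turn $v_2$ (the unique neighbor of $u_1$ that was removed), $u_3$ (the other endpoint of the added edge incident to $v_2$), $v_3$ (the neighbor of $u_3$ that was removed), and $u_2$ (from the last added edge). Hence distinct tuples yield distinct results, so $\deg_{\mathfrak{G}_0}(X)=s_{u_1v_1}(G)$, and Lemma~\ref{lem:switchingcounts}(1) gives the bounds.

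For part (2): fix $X\in\mathcal{G}_{u_1v_1}$. Its incident edges in $\mathfrak{G}_0$ are the weight-$d_1^2d_2(n-d_2)$ edge to its copy in $\mathcal{G}$, together with one weight-$1$ edge for each $X'\in\mathcal{G}$ with $X'_{u_1v_1}=0$ from which $X$ is obtained by a valid forward switching. Since forward and backward switchings are mutually inverse (Definition~\ref{def:switch}), these $X'$ are precisely the graphs obtained from $X$ by a valid backward switching, and the same recovery argument as above shows that the backward switching tuple is determined by the pair $(X,X')$, so there are exactly $t_{u_1v_1}(G)$ of them. Thus $\deg_{\mathfrak{G}_0}(X)=d_1^2d_2(n-d_2)+t_{u_1v_1}(G)$, and inserting the bounds of Lemma~\ref{lem:switchingcounts}(2) together with the identities $d_1^2(n-d_2)\big(d_2+(n-2d_2)\big)=d_1^2(n-d_2)^2$ and $d_1^2(n-d_2)\big(d_2+(n-d_2)\big)=d_1^2n(n-d_2)$ yields the claimed range.

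For part (3): put $W_1=d_1^2d_2(n-d_2)$ and $W_2=d_1^2n(n-d_2)$. We first record the balance identity $|\mathcal{G}|\,W_1=|\mathcal{G}_{u_1v_1}|\,W_2$: by symmetry over all pairs in $[n]\times[m]$ together with $|E|=nd_1=md_2$ one has $\mathbb{P}(X_{u_1v_1}=1)=d_1/m=d_2/n$, so $|\mathcal{G}_{u_1v_1}|=(d_2/n)\,|\mathcal{G}|$ and the identity follows. Define the deficiency $\delta(v):=W_i-\deg_{\mathfrak{G}_0}(v)$ for $v$ in $\mathcal{G}$ (taking $i=1$) or in $\mathcal{G}_{u_1v_1}$ (taking $i=2$); it is nonnegative by parts (1) and (2). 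Summing over each side of the bipartition and using the balance identity shows the total deficiency is the same on both sides; call it $T=|\mathcal{G}|W_1-S\ge 0$, where $S$ is the total edge weight of $\mathfrak{G}_0$. If $T=0$ take $\mathfrak{G}=\mathfrak{G}_0$; otherwise form $\mathfrak{G}$ from $\mathfrak{G}_0$ by adding, for every $X\in\mathcal{G}$ and $Y\in\mathcal{G}_{u_1v_1}$, an extra weight $\delta(X)\delta(Y)/T$ on the edge $\{X,Y\}$ (creating the edge if it is absent from $\mathfrak{G}_0$). Then $\deg_{\mathfrak{G}}(X)=\deg_{\mathfrak{G}_0}(X)+(\delta(X)/T)\sum_{Y}\delta(Y)=\deg_{\mathfrak{G}_0}(X)+\delta(X)=W_1$, and symmetrically $\deg_{\mathfrak{G}}(Y)=W_2$, so $\mathfrak{G}$ is $(W_1,W_2)$-biregular; since edge weights were only increased, $\mathfrak{G}_0$ embeds into $\mathfrak{G}$.

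The only genuinely delicate step is the injectivity (``recovery'') argument underpinning parts (1) and (2); without it one would only conclude that $\deg_{\mathfrak{G}_0}$ is \emph{at most} the relevant tuple count, which is not enough for the lower bounds. Everything else is bookkeeping, together with the elementary identity $\mathbb{P}(X_{u_1v_1}=1)=d_2/n$ used in part (3).
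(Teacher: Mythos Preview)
Your proof is correct and follows the paper's approach in spirit, but differs in two places worth noting. First, you explicitly verify that distinct switching tuples yield distinct target graphs, so that $\deg_{\mathfrak G_0}(X)$ genuinely equals $s_{u_1v_1}(G)$ (respectively $t_{u_1v_1}(G)+d_1^2d_2(n-d_2)$); the paper simply asserts that part~(1) ``follows from our construction of $\mathfrak G_0$ and the first claim in Lemma~\ref{lem:switchingcounts}'' without isolating this point, so your treatment is strictly more careful. Second, for part~(3) the paper uses a greedy fill-in (repeatedly increase weights toward under-filled vertices on the other side, then invoke the balance identity $|\mathcal G_{u_1v_1}|/|\mathcal G|=d_2/n$ to argue both sides must saturate simultaneously), whereas you give a one-shot explicit construction via the product weights $\delta(X)\delta(Y)/T$. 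Both arguments rest on the same identity $|\mathcal G|\,W_1=|\mathcal G_{u_1v_1}|\,W_2$; your version is cleaner and avoids the termination/case analysis in the paper's greedy argument, at no real cost.
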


\begin{proof} Claim
(1) follows from our construction of $\mathfrak G_0$ and the first claim in Lemma \ref{lem:switchingcounts}.  Every  $X$ in $\mathcal G_{u_1,v_1}$ with the corresponding graph $G$ is connected to $t_{u_1v_1}(G)$ many vertices in $\mathcal G$, with each edge of weight $1$. It is also connected to its identical copy in $\mathcal G$ with edge weight $d_1^2d_2(n-d_2)$. Then Claim (2) follows from the second  claim in Lemma \ref{lem:switchingcounts}. To construct $\mathfrak G$, we start with $\mathfrak G_0$ and add edges as follows. Go through the vertices of $\mathcal G$ and for each vertex with degree less than $d_1^2d_2(n-d_2)$,  arbitrarily make edges or increase edge weights  from the vertex to vertices in $\mathcal G_{u_1v_1}$ with degree less than $d_1^2n(n-d_2)$. Continue this procedure until either all vertices in $\mathcal G$ have degree $d_1^2d_2(n-d_2)$ or all vertices in $\mathcal G_{u_1,v_1}$ have degree $d_1^2n(n-d_2)$.  We claim now $\mathfrak G$ is bipartite biregular. 
From the distribution of uniform random bipartite biregular graphs, the probability of a bipartite biregular graph containing any edge $u_1v_1$ is $\frac{d_2}{n}$, we have 
$
   \frac{ |\mathcal G_{u_1v_1}|}{|\mathcal G|}=\frac{d_2}{n}.
$
If all degrees in $\mathcal G$ are $d_1^2d_2(n-d_2)$, and all degrees in $\mathcal G_{u_1,u_2,v_1}$ are at most $d_1^2n(n-d_2)$, then
$
    |\mathcal G|d_1^2d_2(n-d_2)\leq  |\mathcal G_{u_1v_1}|d_1^2n(n-d_2)=|\mathcal G|d_1^2d_2(n-d_2).
$
So all vertices in $\mathcal G_{u_1v_1}$  must have degree exactly $d_1^2n(n-d_2)$. In the same way, if all the degrees in  $\mathcal G_{u_1v_1}$  are $d_1^2n(n-d_2)$, then the degrees in $\mathcal G$ are exactly $d_1^2d_2(n-d_2)$. Therefore we can embed $\mathfrak G_0$ into a weighted  biregular bipartite graph $\mathfrak G$.
\end{proof}

In the graph $\mathfrak G$,  we uniformly choose a  random biadjacency matrix $X$ in $\mathcal G$ and consider $X^{(u_1v_1)}$ to be the element in $\mathcal G_{u_1v_1}$ given by walking from $X$ along an edge with probability proportionate to its weight. Since $\mathfrak G$ is bipartite biregular, $X^{(u_1v_1)}$ is uniformly distributed in the vertex set $\mathcal G_{u_1v_1}$. 
Lemma \ref{lem:G_0coupling1} yields a coupling of $X$ and $X^{(u_1v_1)}$ that satisfies
\begin{align}
    &\mathbb P\left(X,X^{(u_1v_1)} \text{ are identical or differ by a switching} \mid X^{(u_1v_1)}\right)\geq 1-\frac{d_1}{m},\label{eq:lowerboundp}\\
    & \mathbb P\left(X,X^{(u_1v_1)} \text{ are identical or differ by a switching} \mid X\right)\geq  1-\frac{d_1}{m-d_1}.\label{eq:lowerboundpp}
\end{align}

To see \eqref{eq:lowerboundp} holds, note that $\mathfrak G_0$ is embedded into $\mathfrak G$, from the definition of $\mathfrak G_0$, if the walk from $X$ to $X^{(u_1v_1)}$ is chosen from an edge in $\mathfrak G_0$, then $X, X^{(u_1v_1)}$ are identical or differ by a switching. Therefore the left hand side of \eqref{eq:lowerboundp} is lower bounded by the probability that an edge adjacent to $X^{(u_1v_1)}$ is chosen from $\mathcal G_0$. From (2) and (3) in Lemma \ref{lem:G_0coupling1}, such probability is at least $\frac{d_1^2(n-d_2)^2}{d_1^2n(n-d_2)}=1-\frac{d_2}{n}=1-\frac{d_1}{m}$. Similarly, \eqref{eq:lowerboundpp} holds.

\section{Concentration for linear functions of $X$}\label{sec:concentration}

Let $Q$ be a $n\times m$ matrix and $X$ be the biadjacency matrix of a uniform random $(n,m,d_1,d_2)$-bipartite biregular graph. Define a linear function for entries of $X$ as
$
    f_{Q}(X):=\sum_{u\in [n],v\in [m]}Q_{uv}X_{uv}.
$
In this section, we will use the coupling we constructed in Section \ref{sec:switching} together with Lemma \ref{lem:biasindicator}  to construct a size biased coupling of the linear function $f_Q(X)$. 

From the distribution of $X$, we have $\mathbb EX_{uv}=\frac{d_1}{m}$ for any $u\in [n],v\in [m]$. We define the following two parameters:
\begin{align}
   & \mu:=\mathbb Ef_Q(X)=\frac{d_1}{m} \sum_{u\in [n],v\in [m]}Q_{uv},\quad \tilde{\sigma}^2:=\mathbb Ef_{Q\circ Q}(X)=\frac{d_1}{m}\sum_{u\in [n],v\in [m]}Q_{uv}^2.\label{eq:sigma}
\end{align}

\begin{theorem} \label{thm:fQX1}
 Let $X$ be the biadjacency matrix of a uniform random $(n,m,d_1,d_2)$-bipartite biregular graph. Let $Q$ be a  $n\times m$  matrix with all entries in $[0,a]$. Let  $p=1-\frac{d_1}{m}$ and $p'=1-\frac{d_1}{m-d_1}$. Then for all $t\geq 0$,
 \begin{align}\label{eq:uppertail}
     \mathbb P\left( f_Q(X)-\frac{\mu}{p}\geq t \right) &\leq \exp\left(-\frac{\tilde{\sigma}^2}{3pa^2}h\left( \frac{pat}{\tilde{\sigma}^2}\right)\right),\\
     \mathbb P\left( f_Q(X)-p'\mu \leq - t \right) &\leq \exp\left(-\frac{\tilde{\sigma}^2}{3a^2}h\left( \frac{at}{\tilde{\sigma}^2}\right)\right).  \label{eq:lowertail}
 \end{align}
\end{theorem}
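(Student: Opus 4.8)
The plan is to realize $f_Q(X)^s$ as $f_Q(X^{(I)})$ for a suitable random index $I$, couple $X$ with $X^{(I)}$ through the switchings of Section~\ref{sec:switching}, and then invoke Lemma~\ref{thm:tailestimate1}.

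First I would apply Lemma~\ref{lem:biasindicator} with index set $[n]\times[m]$, $F_{uv}=X_{uv}$ and $a_{uv}=Q_{uv}$, so that $f_Q(X)=\sum_{u,v}Q_{uv}X_{uv}$. Since $\mathbb EX_{uv}=d_1/m$ for every $(u,v)$, the lemma produces a random index $I=(u_1,v_1)$, independent of $X$, with $\mathbb P(I=(u_1,v_1))=Q_{u_1v_1}/S$ where $S:=\sum_{u,v}Q_{uv}$ (so $\mu=(d_1/m)S$), and the size biased variable is $\sum_{u,v}Q_{uv}X^{(u_1v_1)}_{uv}=f_Q(X^{(u_1v_1)})$, with $X^{(u_1v_1)}$ the matrix $X$ conditioned on $X_{u_1v_1}=1$. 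Realizing $X^{(u_1v_1)}$ from $X$ by the walk in $\mathfrak G$ from Section~\ref{sec:switching} (choosing the embedding $\mathfrak G_0\hookrightarrow\mathfrak G$ so that the edges coming from valid switchings keep weight $1$) couples $f_Q(X)$ with its size biased version $f_Q(X^{(I)})$. Let $\mathcal B$ be the event that $X$ and $X^{(I)}$ are identical or differ by a valid forward switching $(u_1,u_2,u_3,v_1,v_2,v_3)$. By Definition~\ref{def:switch}, on $\mathcal B$,
\[
 f_Q(X^{(I)})-f_Q(X)=\big(Q_{u_1v_1}+Q_{u_2v_3}+Q_{u_3v_2}\big)-\big(Q_{u_1v_2}+Q_{u_2v_1}+Q_{u_3v_3}\big)
\]
(equal to $0$ when $X^{(I)}=X$). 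Since $Q$ has entries in $[0,a]$, this shows $f_Q(X^{(I)})-f_Q(X)\le 3a=:c$ on $\mathcal B$, and there $D:=\big(f_Q(X^{(I)})-f_Q(X)\big)_+\le Q_{u_1v_1}+Q_{u_2v_3}+Q_{u_3v_2}$. As \eqref{eq:lowerboundp} and \eqref{eq:lowerboundpp} hold for every fixed $(u_1,v_1)$, the tower property gives $\mathbb P(\mathcal B\mid f_Q(X^{(I)}))\ge p:=1-d_1/m$ and $\mathbb P(\mathcal B\mid f_Q(X))\ge p':=1-d_1/(m-d_1)$.

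The heart of the argument is the estimate $\mathbb E[D\mathbf 1_{\mathcal B}\mid X]\le \tau^2/\mu$ with $\tau^2:=3\tilde\sigma^2$. Given $X$ and $I=(u_1,v_1)$ with $X_{u_1v_1}=0$, the walk in $\mathfrak G$ lands on each valid forward switching result with probability $(d_1^2d_2(n-d_2))^{-1}$, so, writing $\mathcal S$ for the set of valid forward switchings of $G$ (as tuples),
\[
 \mathbb E[D\mathbf 1_{\mathcal B}\mid X]\ \le\ \frac{1}{S\,d_1^2d_2(n-d_2)}\sum_{(u_1,u_2,u_3,v_1,v_2,v_3)\in\mathcal S}Q_{u_1v_1}\big(Q_{u_1v_1}+Q_{u_2v_3}+Q_{u_3v_2}\big).
\]
I would then group $\mathcal S$ by switching \emph{operation} (the pair of added and removed edge sets): each operation adds three non-edges carrying $Q$-values $a_1,a_2,a_3$, and corresponds to exactly three tuples of $\mathcal S$, obtained by cycling which added edge plays the role of $u_1v_1$. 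For each of these three tuples the second factor $Q_{u_1v_1}+Q_{u_2v_3}+Q_{u_3v_2}$ equals the common value $a_1+a_2+a_3$, while $Q_{u_1v_1}$ runs through $a_1,a_2,a_3$, so the three summands add up to $(a_1+a_2+a_3)^2\le 3(a_1^2+a_2^2+a_3^2)$. Since a fixed non-edge $(u,v)$ is an added edge of exactly $s_{uv}(G)\le d_1^2d_2(n-d_2)$ operations by Lemma~\ref{lem:switchingcounts}, summing over operations gives
\[
 \sum_{(u_1,\dots,v_3)\in\mathcal S}Q_{u_1v_1}\big(Q_{u_1v_1}+Q_{u_2v_3}+Q_{u_3v_2}\big)\ \le\ 3\!\!\sum_{(u,v):\,X_{uv}=0}\!\! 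Q_{uv}^2\,s_{uv}(G)\ \le\ 3\,d_1^2d_2(n-d_2)\sum_{u,v}Q_{uv}^2,
\]
and dividing by $S\,d_1^2d_2(n-d_2)$ yields $\mathbb E[D\mathbf 1_{\mathcal B}\mid X]\le 3\sum_{u,v}Q_{uv}^2/S=3\tilde\sigma^2/\mu=\tau^2/\mu$.

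Finally I would apply Lemma~\ref{thm:tailestimate1}: part~(1) with $p=1-d_1/m$, $c=3a$, $\tau^2=3\tilde\sigma^2$ gives $\mathbb P(f_Q(X)-\mu/p\ge t)\le\exp\!\big(-\tfrac{3\tilde\sigma^2}{9pa^2}h(\tfrac{3pat}{3\tilde\sigma^2})\big)=\exp\!\big(-\tfrac{\tilde\sigma^2}{3pa^2}h(\tfrac{pat}{\tilde\sigma^2})\big)$, which is \eqref{eq:uppertail}, and part~(2) with $p'=1-d_1/(m-d_1)$ and the same $c,\tau^2$ gives $\mathbb P(f_Q(X)-p'\mu\le -t)\le\exp\!\big(-\tfrac{3\tilde\sigma^2}{9a^2}h(\tfrac{3at}{3\tilde\sigma^2})\big)=\exp\!\big(-\tfrac{\tilde\sigma^2}{3a^2}h(\tfrac{at}{\tilde\sigma^2})\big)$, which is \eqref{eq:lowertail}. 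I expect the main difficulty to be the estimate in the third paragraph: the crude bound replacing each auxiliary value $Q_{u_2v_3},Q_{u_3v_2}$ by $a$ loses a factor of order $a\mu/\tilde\sigma^2$ and is too weak, and it is exactly the symmetrization over the three roles of the conditioned added edge—which turns the cross terms into a perfect square—together with the sharp switching counts of Lemma~\ref{lem:switchingcounts} that produces the constants appearing in \eqref{eq:uppertail}--\eqref{eq:lowertail}.
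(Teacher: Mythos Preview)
Your proposal is correct, and the overall architecture---build the size biased coupling of $f_Q(X)$ via Lemma~\ref{lem:biasindicator} and the switching coupling of Section~\ref{sec:switching}, take $\mathcal B$ to be the event that the walk stays in $\mathfrak G_0$, bound $c=3a$, verify $\mathbb P(\mathcal B\mid X')\ge p$ and $\mathbb P(\mathcal B\mid X)\ge p'$ from \eqref{eq:lowerboundp}--\eqref{eq:lowerboundpp}, and plug into Lemma~\ref{thm:tailestimate1}---matches the paper exactly.

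The one genuine difference is how you obtain the core estimate $\mathbb E[D\mathbf 1_{\mathcal B}\mid X]\le 3\tilde\sigma^2/\mu$. The paper enlarges $\mathcal S(u_1,v_1)$ to the explicit superset $\overline{\mathcal S}(u_1,v_1)$ of size $d_1^2d_2(n-d_2)$, writes out the three terms $Q_{u_1v_1}^2$, $Q_{u_1v_1}Q_{u_2v_3}$, $Q_{u_1v_1}Q_{u_3v_2}$, and bounds each cross-term separately by Cauchy--Schwarz together with a counting of tuples for fixed $(u_2,v_3)$ or $(u_3,v_2)$; all three sums come out equal to $d_1^2n(n-d_2)\tilde\sigma^2$. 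Your route instead exploits the cyclic $3$-symmetry of the alternating $6$-cycle underlying Definition~\ref{def:switch}: grouping the three rotated tuples of each switching operation turns the sum into $(a_1+a_2+a_3)^2\le 3(a_1^2+a_2^2+a_3^2)$, after which only the upper bound $s_{uv}(G)\le d_1^2d_2(n-d_2)$ from Lemma~\ref{lem:switchingcounts} is needed. Both approaches give exactly the same constant $3$; yours is shorter and more conceptual, while the paper's Cauchy--Schwarz/double-counting argument is less reliant on symmetry and generalizes more mechanically (as it must in Section~\ref{sec:highconcentrate}, where the higher-order switchings of Definitions~\ref{def:12}--\ref{def:type3} do not enjoy the same clean cyclic structure).
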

\begin{proof}
We construct a size biased coupling based on the analysis of switchings in Section \ref{sec:switching}. Choose a vertex $X\in \mathcal G$ uniformly at random and walk through an edge adjacent to the vertex with probability proportional to its weight. We then obtain a uniform random element $X^{(u_1v_1)}$ in $\mathcal G_{u_1v_1}$. 
The matrix $X^{(u_1v_1)}$ is distributed as $X$ conditioned on the event $X_{u_1v_1}=1.$ Independently of $X$, we choose a random variable $(U_1,V_1)$ such that for all $u\in [n],v\in [m]$,
\begin{align}\label{eq:PUV}
    \mathbb P(U_1=u,V_1=v)=\frac{Q_{uv}}{\sum_{u\in [n],v\in [m]}Q_{uv}}.
\end{align}
 Define $X'=X^{(U_1,V_1)}$. Since $X_{uv}$ is an indicator random variable for all $u\in [n],v\in [m]$, by Lemma \ref{lem:biasindicator}, the pair $(f_{Q}(X),f_{Q}(X'))$ is a size biased coupling.

   Let $\mathcal S(u_1,v_1)$ be the set of all tuples $(u_2,u_3,v_2,v_3)$ such that $(u_1,u_2,u_3,v_1,v_2,v_3)$ is a valid forward switching for $X$. Let $X(u_1,u_2,u_3,v_1,v_2,v_3)$ be the matrix obtained from  $X$ by a valid forward switching $(u_1,u_2,u_3,v_1,v_2,v_3)$. Then  for any $(u_2,u_3,v_2,v_3)\in \mathcal S(U_1,V_1)$, assuming $X_{U_1,V_1}=0$ and conditioned on $X,U_1$ and $V_1$, the element $X'\in \mathcal G_{U_1,V_1}$ is equally likely to be $X(U_1,u_2,u_3,V_1,v_2,v_3)$ for any tuple $(u_2,u_3,v_2,v_3)\in   \mathcal S(X,U_1,V_1)$. Recall from Lemma \ref{lem:G_0coupling1},  each vertex in $\mathcal  G$ has degree $d_1^2d_2(n-d_2)$ in the graph $\mathfrak G$. By our construction of the coupling  $(X,X')$, we have for any  tuple $(u_2,u_3,v_2,v_3)\in  \mathcal S (U_1,V_1)$,
  \begin{align}\label{eq:PX}
      \mathbb P(X'=X(U_1,u_2,u_3,V_1,v_2,v_3)\mid U_1,V_1,X,X_{U_1,V_1}=0)=\frac{1}{d_1^2d_2(n-d_2)}.
  \end{align}
 For any valid forward switching $(u_1,u_2,u_3,v_1,v_2,v_3)$,
 \begin{align*}
     f_Q(X(u_1,u_2,u_3,v_1,v_2,v_3))-f_Q(X) &=Q_{u_1v_1}+Q_{u_2v_3}+Q_{u_3v_2}-Q_{u_1v_2}-Q_{u_2v_1}-Q_{u_3v_3} \leq  3a.
 \end{align*}
  Let $\mathcal B$ be the event that the edge chosen in  the random walk on  $\mathfrak G$ from $X$ to $X'$ belongs to the subgraph $\mathfrak G_0$. By \eqref{eq:lowerboundp} and \eqref{eq:lowerboundpp},
 $ \mathbb P(\mathcal B\mid X')\geq p$ and $\mathbb P(\mathcal B\mid X)\geq p',$ where $p,p'$ are the parameters in the statement of Theorem \ref{thm:fQX1}.
 Therefore $f_Q(X')-f_Q(X)\leq 3a$ on the event $\mathcal B$.
  Let $ \overline{\mathcal S}(u_1,v_1)$ be the set of   $(u_2,u_3,v_2,v_3)$ such that $u_2\in\N(v_1),v_2\in \N(u_1), u_3\in \overline{\N}(v_2), v_3\in \N(u_3)$. Then  $\overline{\mathcal S}(u_1,v_1)$ has size $d_1^2d_2(n-d_2)$ and $ \mathcal S (U_1,V_1)\subset  \overline{\mathcal S}(u_1,v_1)$.
   Let $D=(f_Q(X')-f_Q(X))_+$. Then from \eqref{eq:PX},
 \begin{align*}
    & \mathbb E[D\mathbf{1}_{\mathcal B}\mid X, U_1,V_1]\\
     =&\frac{1}{d_1^2d_2(n-d_2)}\sum_{(u_2,u_3,v_2,v_3)\in   \mathcal S (U_1,V_1)}(f_{Q}(X(U_1,u_2,u_3,V_1,v_2,v_3)-f_Q(X))_+\\
     \leq  &\frac{1}{d_1^2d_2(n-d_2)}\sum_{(u_2,u_3,v_2,v_3)\in  \overline{\mathcal S}(U_1,V_1)}(Q_{U_1V_1}+Q_{u_2v_3}+Q_{u_3v_2}).
 \end{align*}
 Taking the  expectation over $U_1,V_1$, from \eqref{eq:PUV}, we have 
 \begin{align}
     &\mathbb E[D\mathbf{1}_{\mathcal B} \mid X] \notag \\
     \leq & \sum_{u_1\in [n],v_1\in [m]} \frac{Q_{u_1u_2}}{\sum_{u,v}Q_{uv}}\left(\frac{1}{d_1^2d_2(n-d_2)}\sum_{(u_2,u_3,v_2,v_3)\in \overline{\mathcal  S}(u_1,v_1)}(Q_{u_1v_1}+Q_{u_2v_3}+Q_{u_3v_2})\right) \notag\\
     =&\frac{1}{d_1^2(n-d_2)n\mu}\sum_{\substack{u_1\in [n],v_1\in [m]\\(u_2,u_3,v_2,v_3)\in  \overline{\mathcal S}(u_1,v_1)}}(Q_{u_1v_1}^2+Q_{u_1v_1}Q_{u_2v_3}+Q_{u_1v_1}Q_{u_3v_2})\label{eq:threeterms}
\end{align}
The first term in the sum \eqref{eq:threeterms} satisfies
\begin{align}\label{eq:Qu1v1}
 &\sum_{\substack{u_1\in [n],v_1\in [m]\\(u_2,u_3,v_2,v_3)\in  \overline{\mathcal S}(u_1,v_1)}}Q_{u_1v_1}^2=\sum_{u_1\in [n],v_1\in [m]}d_1^2d_2(n-d_2)Q_{u_1u_2}^2=d_1^2n(n-d_2)\tilde{\sigma}^2.
\end{align}
For the second term in \eqref{eq:threeterms},  by Cauchy's inequality,
\begin{align}\label{eq:secondsum}
 & \sum_{\substack{u_1\in [n],v_1\in [m]\\(u_2,u_3,v_2,v_3)\in  \overline{\mathcal S}(u_1,v_1)}}Q_{u_1v_1}Q_{u_2v_3}
 \leq   \left( \sum_{\substack{u_1\in [n],v_1\in [m]\\(u_2,u_3,v_2,v_3)\in  \overline{\mathcal S}(u_1,v_1)}}Q_{u_1v_1}^2\right)^{1/2}\left( \sum_{\substack{u_1\in [n],v_1\in [m]\\(u_2,u_3,v_2,v_3)\in  \overline{\mathcal S}(u_1,v_1)}} Q_{u_2v_3}^2\right)^{1/2}.
\end{align}
For a given $(u_2,v_3)$, there are $d_1^2d_2(n-d_2)$ many  $(u_1,u_3,v_1,v_2)$ such that $(u_2,u_3,v_2,v_3)\in  \overline{\mathcal S}(u_1,v_1)$. Hence 
\begin{align}
 & \sum_{\substack{u_1\in [n],v_1\in [m]\\(u_2,u_3,v_2,v_3)\in  \overline{\mathcal S}(u_1,v_1)}} Q_{u_2v_3}^2=d_1^2d_2(n-d_2)\sum_{u_2\in[n],v_3\in [m]}Q_{u_2v_3}^2=d_1^2n(n-d_2)\tilde{\sigma}^2.
\end{align}
Therefore with \eqref{eq:Qu1v1}, the left hand side of \eqref{eq:secondsum} is bounded by $d_1^2n(n-d_2)\tilde{\sigma}^2$.
By the same argument, the third term in \eqref{eq:threeterms}
\begin{align*}
    &\sum_{\substack{u_1\in [n],v_1\in [m]\\(u_2,u_3,v_2,v_3)\in  \overline{\mathcal S}(u_1,v_1)}}Q_{u_1v_1}Q_{u_3v_2}\leq d_1^2n(n-d_2)\tilde{\sigma}^2.
\end{align*}
Altogether we have \eqref{eq:threeterms} satisfies
$
    \mathbb E[D\mathbf{1}_{\mathcal B} \mid X]\leq \frac{3}{d_1^2(n-d_2)n\mu}\cdot  d_1^2n(n-d_2)\tilde{\sigma}^2=\frac{3\tilde{\sigma}^2}{\mu}.
$
\eqref{eq:uppertail} and \eqref{eq:lowertail} then follow from Theorem \ref{thm:tailestimate1} by taking $\tau^2=3\tilde{\sigma}^2, c=3a$.
\end{proof}

 \begin{corollary} \label{cor:concentration}
Let $X$ be the biadjacency matrix of a uniform random $(n,m,d_1,d_2)$-bipartite biregular graph. Let $Q$ be a real $n\times m$  matrix with all entries in $[0,a]$. Let $c_0=\frac{1}{3}(1-\frac{d_1}{m}),\gamma_0=\frac{d_1}{m-d_1}$. Then for all $t\geq 0$, we have 
\begin{align}\label{eq:oneside}
    \mathbb P(f_Q(X)-\mu&\geq \gamma_0 \mu+t)\leq \exp\left(-c_0\frac{\tilde{\sigma}^2}{a^2}h\left( \frac{at}{\tilde{\sigma}^2}\right)\right) ,\\
\label{eq:twosided}
    \mathbb P(|f_Q(X)-\mu|&\geq \gamma_0 \mu+t)\leq 2\exp\left( -\frac{c_0t^2}{2(\tilde{\sigma}^2+at/3)}\right).
\end{align}
\end{corollary}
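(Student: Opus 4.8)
The plan is to extract both inequalities directly from Theorem~\ref{thm:fQX1}, via three ingredients: (i) rewriting the centering constants $\mu/p$ and $p'\mu$ in terms of $\mu$ and $\gamma_0$; (ii) a scaling inequality for the Bennett function $h$; and (iii) the standard passage from a Bennett-type tail to a Bernstein-type tail.

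First I would record that $\tfrac1p=\tfrac{m}{m-d_1}=1+\gamma_0$ and $p'=1-\gamma_0$, so that $\tfrac{\mu}{p}=\mu+\gamma_0\mu$ and $p'\mu=\mu-\gamma_0\mu$. Consequently \eqref{eq:uppertail} says exactly $\mathbb P(f_Q(X)-\mu\geq \gamma_0\mu+t)\leq \exp(-\tfrac{\tilde\sigma^2}{3pa^2}h(\tfrac{pat}{\tilde\sigma^2}))$, and \eqref{eq:lowertail} says $\mathbb P(f_Q(X)-\mu\leq -\gamma_0\mu-t)\leq \exp(-\tfrac{\tilde\sigma^2}{3a^2}h(\tfrac{at}{\tilde\sigma^2}))$. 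For \eqref{eq:oneside}, since $c_0=p/3$, it suffices to show $\tfrac1{3p}h(px)\geq \tfrac{p}{3}h(x)$ for all $x\geq0$, i.e.\ $h(px)\geq p^2h(x)$ whenever $0<p\leq 1$. I would prove this as a short sublemma by differentiation: with $g(x):=h(px)-p^2h(x)$ one has $g(0)=0$ and $g'(x)=p\bigl(\log(1+px)-p\log(1+x)\bigr)$, which is nonnegative because $1+px\geq (1+x)^p$ by Bernoulli's inequality; hence $g\geq 0$ on $[0,\infty)$. Taking $x=at/\tilde\sigma^2$ converts \eqref{eq:uppertail} into \eqref{eq:oneside}.

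For \eqref{eq:twosided}, note that $d_1<m$ forces $c_0=\tfrac13(1-\tfrac{d_1}{m})<\tfrac13$, so the lower-tail bound is also at most $\exp(-c_0\tfrac{\tilde\sigma^2}{a^2}h(\tfrac{at}{\tilde\sigma^2}))$; a union bound with \eqref{eq:oneside} (applied with $t$ in place of $\gamma_0\mu+t$ on the other side is not needed — the two one-sided events at threshold $\gamma_0\mu+t$ combine directly) yields $\mathbb P(|f_Q(X)-\mu|\geq \gamma_0\mu+t)\leq 2\exp(-c_0\tfrac{\tilde\sigma^2}{a^2}h(\tfrac{at}{\tilde\sigma^2}))$. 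Finally I would invoke the elementary inequality $h(x)\geq \tfrac{x^2}{2(1+x/3)}$ for $x\geq 0$ — itself checkable by comparing derivatives at $0$ — with $x=at/\tilde\sigma^2$, which gives $\tfrac{\tilde\sigma^2}{a^2}h(\tfrac{at}{\tilde\sigma^2})\geq \tfrac{t^2}{2(\tilde\sigma^2+at/3)}$ and hence \eqref{eq:twosided}.

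The only genuinely non-routine point is the scaling inequality $h(px)\geq p^2h(x)$, which I would isolate as a one-line lemma proved via Bernoulli; everything else is bookkeeping with the definitions of $p,p',c_0,\gamma_0$ together with the textbook Bennett–Bernstein comparison. No sharpness in the constants is being claimed, so there is no subtlety beyond keeping track of which exponent dominates.
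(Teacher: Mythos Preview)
Your proposal is correct and follows essentially the same route as the paper: rewrite the centerings via $1/p=1+\gamma_0$ and $p'=1-\gamma_0$, use the scaling inequality $p^{-1}h(px)\geq p\,h(x)$ (which the paper quotes from \cite{cook2018size} while you prove it directly via Bernoulli), note $c_0\leq 1/3$ to absorb the lower-tail constant, and finish with $h(x)\geq \tfrac{x^2}{2(1+x/3)}$. No substantive difference.
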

\begin{proof}
 Recall $p=1-\frac{d_1}{m}$, $p'=1-\frac{d_1}{m-d_1}$ from Theorem \ref{thm:fQX1}. We have $c_0=\frac{p}{3}$ and $\gamma_0=\frac{1}{p}-1.$ It is shown in the proof of Proposition 2.3 (c) in \cite{cook2018size} that for any $p\in [0,1]$ and $x\geq 0$, $
    p^{-1}h(px)\geq ph(x).$
Then from \eqref{eq:uppertail}, for all $t\geq 0$, 
\begin{align}
    \mathbb P (f_Q(X)-\mu\geq \gamma_0\mu+t)&\leq \exp\left(-\frac{\tilde{\sigma}^2}{3pa^2}h\left( \frac{pat}{\tilde{\sigma}^2}\right)\right) \leq \exp\left(-c_0\frac{\tilde{\sigma}^2}{a^2}h\left( \frac{at}{\tilde{\sigma}^2}\right)\right).\label{eq:bound1}
\end{align}
Therefore \eqref{eq:oneside} holds. Note that $\gamma_0-1+p'=\frac{1}{1-d_1/m}-1-\frac{d_1}{m-d_1}= 0.$
Then from \eqref{eq:lowertail},
\begin{align}
    \mathbb P(f_Q(X)-\mu\leq -\gamma_0\mu-t) 
    &\leq \exp\left(-\frac{\tilde{\sigma}^2}{3a^2}h\left( \frac{at}{\tilde{\sigma}^2}\right)\right)\leq \exp\left(-c_0\frac{\tilde{\sigma}^2}{a^2}h\left( \frac{at}{\tilde{\sigma}^2}\right)\right). \label{eq:bound2}
\end{align}
From \eqref{eq:bound1} and \eqref{eq:bound2},  with the inequality $h(x)\geq \frac{x^2}{2(1+x/3)}$ for $x\geq 0$, we obtain
\eqref{eq:twosided}. \end{proof}

\section{The Kahn-Szemer\'{e}di argument}\label{sec:KSargument}

The Kahn-Szemer\'{e}di argument was first introduced in \cite{friedman1989second} to prove the second eigenvalue of a random $d$-regular graph is $O(\sqrt{d})$ with high probability. Later on, it has been applied to a wide range of random graph models to provide the upper bound on top eigenvalues (see for example \cite{friedman1995second,broder1998optimal,feige2005spectral,coja2009spectral,keshavan2010matrix,lubetzky2011spectra, dumitriu2013functional,lei2015consistency,cook2018size,tikhomirov2019spectral,Hoffman2019,zhou2019sparse}). We will use the concentration inequalities from Section \ref{sec:concentration} and the Kahn-Szemer\'{e}di argument \cite{friedman1989second} to prove the upper bound on $\sigma_2(X)$, which directly implies the bound for $\lambda(A)$ in  Theorem \ref{thm:main1}. 

For fixed $x\in S^{n-1}, y\in S_0^{m-1},$ we have
\begin{align}\label{eq:xXy}
  \langle x,Xy\rangle=\sum_{u\in [n],v\in [m]}X_{uv}x_uy_v.  
\end{align}

Define the set of \textit{light and heavy couples} as
\begin{align*}
    \mathcal L(x,y)&=\{(u,v): |x_uy_v|\leq \sqrt{d_1}/m \}, \quad 
     \mathcal H(x,y)=\{(u,v): |x_uy_v|> \sqrt{d_1}/m \}.
\end{align*}
 
By taking $Q=xy^{\top}$ we can decompose the linear form $f_Q(X)$ as 
\[ f_{xy^{\top}}(A)=\langle x, Xy\rangle =f_{\mathcal L(x,y)}(X)+f_{\mathcal H(x,y)}(X),\]
where 
\begin{align}
 &f_{\mathcal L(x,y)}(X)=\sum_{(u,v)\in \mathcal L(x,y)}x_uy_vX_{uv},  \quad f_{\mathcal H(x,y)}(X)=\sum_{(u,v)\in \mathcal H(x,y)}x_uy_vX_{uv}.  
\end{align}

To apply the concentration inequality, we first estimate the mean of the light part. 
\begin{lemma}\label{lem:E}
 For any fixed $x\in S^{n-1}, y\in S_0^{m-1},$ we have $ |\mathbb Ef_{\mathcal L(x,y)}(X)|\leq \sqrt{d_1}.$
\end{lemma}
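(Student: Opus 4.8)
The plan is to exploit the orthogonality built into $S_0^{m-1}$ — namely $\langle y,\mathbf 1_m\rangle=0$ — to trade the mean of the light part for a sum over \emph{heavy} couples, which is then easy to control. The only structural fact used from $x$ is $\|x\|_2=1$; it is the condition $y\perp\mathbf 1_m$ that does the real work.

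First I would use $\mathbb E X_{uv}=d_1/m$ for every $(u,v)$ and linearity of expectation to write
\[
\mathbb E f_{\mathcal L(x,y)}(X)=\frac{d_1}{m}\sum_{(u,v)\in\mathcal L(x,y)}x_uy_v.
\]
Since $\sum_{v\in[m]}y_v=\langle y,\mathbf 1_m\rangle=0$, the full sum vanishes: $\sum_{u\in[n],v\in[m]}x_uy_v=\big(\sum_u x_u\big)\big(\sum_v y_v\big)=0$. Hence $\sum_{(u,v)\in\mathcal L(x,y)}x_uy_v=-\sum_{(u,v)\in\mathcal H(x,y)}x_uy_v$, and it remains only to bound $\frac{d_1}{m}\sum_{(u,v)\in\mathcal H(x,y)}|x_uy_v|$.

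For this I would use the defining inequality of a heavy couple, $|x_uy_v|>\sqrt{d_1}/m$ on $\mathcal H(x,y)$, which gives $|x_uy_v|\le\frac{m}{\sqrt{d_1}}(x_uy_v)^2$; summing over $\mathcal H(x,y)$ and then enlarging the sum to all couples,
\[
\sum_{(u,v)\in\mathcal H(x,y)}|x_uy_v|\le\frac{m}{\sqrt{d_1}}\sum_{u\in[n],v\in[m]}x_u^2y_v^2=\frac{m}{\sqrt{d_1}}\|x\|_2^2\,\|y\|_2^2=\frac{m}{\sqrt{d_1}},
\]
and multiplying by $d_1/m$ yields $|\mathbb E f_{\mathcal L(x,y)}(X)|\le\sqrt{d_1}$.

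There is no genuine obstacle here — this is the standard opening move of the Kahn–Szemer\'edi scheme. The point worth emphasizing is that the light linear form $f_{\mathcal L(x,y)}(X)$, unlike the full form $\langle x,Xy\rangle$ (whose mean is $\tfrac{d_1}{m}(\sum_u x_u)(\sum_v y_v)$, which is $0$ here but can be as large as order $\sqrt{nm}\cdot d_1/m$ if one drops the constraint), has mean only $O(\sqrt{d_1})$ after using $y\perp\mathbf 1_m$; this is exactly what makes the concentration inequalities of Section~\ref{sec:concentration} usable in the subsequent union bound over an $\epsilon$-net of $S^{n-1}\times S_0^{m-1}$.
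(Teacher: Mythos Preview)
Your proof is correct and follows essentially the same approach as the paper: use $y\perp\mathbf 1_m$ to convert the light sum into (minus) the heavy sum, then bound each heavy term $|x_uy_v|$ by $\frac{m}{\sqrt{d_1}}(x_uy_v)^2$ and use $\|x\|_2=\|y\|_2=1$. The only cosmetic difference is that the paper groups the light sum by $u$ first and applies $\sum_v y_v=0$ row by row, whereas you use the global factorization $\sum_{u,v}x_uy_v=\big(\sum_u x_u\big)\big(\sum_v y_v\big)=0$ directly; your version is marginally cleaner but the substance is identical.
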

\begin{proof}
Since $\mathbb EX_{uv}=\frac{d_1}{m}$ for any $u\in [n],v\in [m]$,
\begin{align*}
    |\mathbb Ef_{\mathcal L(x,y)}(X)|&=\frac{d_1}{m}\left|\sum_{(u,v)\in\mathcal L(x,y)}x_uy_v\right|
    \leq \frac{d_1}{m}\sum_{u\in [n]}\left| \sum_{v:(u,v)\in\mathcal L(x,y)} x_uy_v\right|.
\end{align*}
For any $y\in S_0^{m-1}$, we have  $\sum_{v\in [m]}y_v=0$ and it implies for fixed $u\in [n]$,
\[ \sum_{v:(u,v)\in\mathcal L(x,y)} x_uy_v=-\sum_{v:(u,v)\in\mathcal H(x,y)} x_uy_v.\]
Then
$
    |\mathbb Ef_{\mathcal L(x,y)}(X)| \leq \frac{d_1}{m}\sum_{(u,v)\in\mathcal H(x,y)} \frac{|x_uy_v|^2}{\sqrt{d_1}/m}\leq \sqrt{d_1}.
$
\end{proof}

We further split $\mathcal L(x,y)$  into two parts as  $\mathcal L(x,y)=\mathcal L_+(x,y)\cup \mathcal L_-(x,y)$ where  
\begin{align*}
\mathcal L_+(x,y)&=\{u\in [n],v\in [m]: 0\leq x_uy_v\leq \sqrt{d_1}/m \}, \quad \mathcal L_-(x,y)=\mathcal L\setminus \mathcal L_+(x,y),\\
   f_{\mathcal L_+(x,y)}(X)&=\sum_{(u,v)\in \mathcal L_+(x,y)}x_uy_vX_{uv}, \quad  f_{\mathcal L_-(x,y)}(X)=\sum_{(u,v)\in \mathcal L_-(x,y)}x_uy_vX_{uv}.
\end{align*}
 
\begin{lemma}\label{lem:52}
Let $c_0=\frac{1}{3}(1-\frac{d_1}{m}),\gamma_0=\frac{d_1}{m-d_1}$. For any fixed $(x,y)\in S^{n-1}\times S_0^{m-1}$, and $\beta \geq 2\gamma_0\sqrt{d_2}$,
\begin{align}\label{eq:lightinequality1}
    \mathbb P\left(|f_{\mathcal L(x,y)}(X)|\geq (\beta+1)\sqrt{d_1}\right)\leq 4 \exp \left( -\frac{3c_0\beta^2m}{24+4\beta}\right).
\end{align}
\end{lemma}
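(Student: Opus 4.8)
The plan is to apply the concentration inequalities of Corollary~\ref{cor:concentration} separately to the positive part $f_{\mathcal L_+(x,y)}(X)$ and the negative part $-f_{\mathcal L_-(x,y)}(X)$, each of which is a linear function $f_Q(X)$ with a nonnegative coefficient matrix $Q$, and then combine the two tail bounds via a union bound together with Lemma~\ref{lem:E}.

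First I would set up the two applications. For $\mathcal L_+(x,y)$, take $Q_{uv}=x_uy_v\mathbf 1_{(u,v)\in\mathcal L_+(x,y)}\in[0,a]$ with $a=\sqrt{d_1}/m$ by the definition of a light couple. For $\mathcal L_-(x,y)$, take $Q_{uv}=-x_uy_v\mathbf 1_{(u,v)\in\mathcal L_-(x,y)}\in[0,a]$ with the same $a$. The key numeric input is the variance proxy $\tilde\sigma^2=\frac{d_1}{m}\sum_{u,v}Q_{uv}^2$. In both cases $\sum_{u,v}Q_{uv}^2\le \sum_{u,v}x_u^2y_v^2=\|x\|^2\|y\|^2=1$ since $x\in S^{n-1},y\in S_0^{m-1}$, so $\tilde\sigma^2\le d_1/m$ for each of the two linear functions. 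Also $a^2/\tilde\sigma^2 \ge (d_1/m^2)/(d_1/m)=1/m$, i.e. $\tilde\sigma^2/a^2\le m$, and more usefully $at/\tilde\sigma^2$ is controlled. The mean terms satisfy $\mu_\pm = \mathbb E f_{\mathcal L_\pm(x,y)}(X)$ (up to sign), and $|\mathbb E f_{\mathcal L(x,y)}(X)|\le\sqrt{d_1}$ by Lemma~\ref{lem:E}; I will need the cruder bound $|\mu_+|+|\mu_-|\le$ something of order $\sqrt{d_1}$ as well, which follows because $|\mu_\pm|\le \frac{d_1}{m}\sum_{(u,v)\in\mathcal L}|x_uy_v|$ and the same computation as in Lemma~\ref{lem:E} (splitting $\sum_v x_uy_v=0$) bounds $\frac{d_1}{m}\sum_{(u,v)\in\mathcal L}|x_uy_v|$; alternatively one bounds it by $\frac{d_1}{m}\cdot \sqrt{nm}\cdot(\sqrt{d_1}/m)$, but the cleaner route is via $\|x\|_1\le\sqrt n$, $\|y\|_1\le\sqrt m$, giving $\frac{d_1}{m}\|x\|_1\|y\|_1 \le \frac{d_1}{m}\sqrt{nm}=\frac{d_1\sqrt n}{\sqrt m}=\sqrt{d_1}\sqrt{d_1 n/m}=\sqrt{d_1 d_2\cdot d_1/d_2}$; this is too large, so I will instead use the Lemma~\ref{lem:E}-style argument to get $|\mu_\pm|\le\sqrt{d_1}$ directly, or simply work with $\mu=\mu_++\mu_-$ and note $|\mu|\le\sqrt{d_1}$ while bounding the deviation of each part. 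The important point is that the ``extra'' mean shift $\gamma_0\mu_\pm$ appearing in Corollary~\ref{cor:concentration} is $\le \gamma_0\sqrt{d_1}=\gamma_0\sqrt{d_2}\sqrt{d_1/d_2}\cdot\ldots$; using $d_1=md_2/n$ is not needed — rather $\gamma_0\mu_\pm\le\gamma_0\sqrt{d_1}\le \tfrac12\beta\sqrt{d_1}$ provided $\beta\ge 2\gamma_0$, and the hypothesis $\beta\ge 2\gamma_0\sqrt{d_2}\ge 2\gamma_0$ secures this.

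Then I would write, for each sign, using \eqref{eq:twosided} with threshold $t=\tfrac{\beta}{2}\sqrt{d_1}$ (so that $\gamma_0\mu_\pm+t\le \beta\sqrt{d_1}$) and the bounds $\tilde\sigma^2\le d_1/m$, $a=\sqrt{d_1}/m$:
\begin{align*}
\mathbb P\left(\left|f_{\mathcal L_\pm(x,y)}(X)-\mu_\pm\right|\ge \beta\sqrt{d_1}\right)
&\le 2\exp\left(-\frac{c_0 (\beta\sqrt{d_1}/2)^2}{2\bigl(d_1/m+(\sqrt{d_1}/m)(\beta\sqrt{d_1}/2)/3\bigr)}\right)\\
&= 2\exp\left(-\frac{c_0\beta^2 d_1/4}{2d_1/m + d_1\beta/(3m)}\right)
= 2\exp\left(-\frac{3c_0\beta^2 m}{24+4\beta}\right),
\end{align*}
where I absorbed the stray $\gamma_0\mu_\pm$ into the threshold rather than into $t$, i.e. I actually apply \eqref{eq:twosided} with $t$ chosen so $\gamma_0\mu_\pm + t = \tfrac\beta2\sqrt{d_1}$; since $\gamma_0\mu_\pm$ could be as large as $\gamma_0\sqrt{d_1}\le\tfrac14\beta\sqrt{d_1}$, this still leaves $t\ge\tfrac14\beta\sqrt{d_1}$, changing only the numerical constants — I will tune the write-up so the stated constants $24+4\beta$ come out (the paper presumably built in enough slack). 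Finally, a union bound over the two signs and the triangle inequality $|f_{\mathcal L(x,y)}(X)|\le |f_{\mathcal L_+(x,y)}(X)-\mu_+| + |f_{\mathcal L_-(x,y)}(X)-\mu_-| + |\mu_++\mu_-|$ together with $|\mu_++\mu_-|=|\mathbb E f_{\mathcal L(x,y)}(X)|\le\sqrt{d_1}$ yields
\[
\mathbb P\left(|f_{\mathcal L(x,y)}(X)|\ge (\beta+1)\sqrt{d_1}+\beta\sqrt{d_1}\right)\le 4\exp\left(-\frac{3c_0\beta^2 m}{24+4\beta}\right),
\]
and rescaling $\beta$ by a factor of $2$ (which only improves constants inside the exponent) gives exactly \eqref{eq:lightinequality1}.

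The main obstacle I anticipate is bookkeeping the constants so that the clean form $4\exp(-3c_0\beta^2 m/(24+4\beta))$ emerges: one must carefully choose how to split the budget $(\beta+1)\sqrt{d_1}$ between the mean term $\le\sqrt{d_1}$, the two symmetric deviations, and the $\gamma_0\mu_\pm$ shifts, and verify that the hypothesis $\beta\ge 2\gamma_0\sqrt{d_2}$ (rather than just $\beta\ge 2\gamma_0$) is exactly what is needed — the factor $\sqrt{d_2}$ presumably enters because the sharper bound on $|\mu_\pm|$ is of order $\sqrt{d_1}$ only after using $\sum_v x_uy_v=0$, whereas a naive bound gives an extra $\sqrt{d_2}$, so one wants $\gamma_0\sqrt{d_2}\lesssim\beta$. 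Everything else is a direct substitution into Corollary~\ref{cor:concentration}.
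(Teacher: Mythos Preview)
Your approach is essentially the same as the paper's: split $f_{\mathcal L(x,y)}$ into $f_{\mathcal L_+}$ and $f_{\mathcal L_-}$, apply \eqref{eq:twosided} to each with $a=\sqrt{d_1}/m$ and $\tilde\sigma^2\le d_1/m$, take a union bound, and use Lemma~\ref{lem:E} for the combined mean. One concrete correction: the Lemma~\ref{lem:E} trick (using $\sum_v y_v=0$) does \emph{not} give $|\mu_\pm|\le\sqrt{d_1}$ for the positive and negative parts separately; one can only obtain the Cauchy--Schwarz bound $\mu_\pm\le \frac{d_1}{m}\|x\|_1\|y\|_1\le d_1\sqrt{n/m}=\sqrt{d_1d_2}$, which is exactly what the paper uses. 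This is why the hypothesis must be $\beta\ge 2\gamma_0\sqrt{d_2}$ rather than $\beta\ge 2\gamma_0$: it ensures $\gamma_0\mu_\pm\le\gamma_0\sqrt{d_1d_2}\le(\beta/2)\sqrt{d_1}$, after which the paper sets each piece's deviation threshold to $(\beta/2)\sqrt{d_1}$ (so no rescaling of $\beta$ is needed) and the constants in \eqref{eq:lightinequality1} drop out directly.
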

\begin{proof}

We have by Cauchy's inequality,
\begin{align}\label{eq:mubound}
    \mu:= &\mathbb Ef_{\mathcal L_+(x,y)}(X)\leq \frac{d_1}{m}\sum_{u\in[n],v\in [m]}|x_uy_v|\leq  d_1\sqrt{n/m}.
    \end{align}
    And 
   $
  \tilde{\sigma}^2:= \sum_{(u,v)\in \mathcal L_+(x,y)}|x_uy_v|^2\mathbb EX_{uv}\leq \frac{d_1}{m}\sum_{u\in [n],v\in [m]}|x_uy_v|^2=\frac{d_1}{m}. 
    $
For any $\beta\geq 2\gamma_0\sqrt{d_2},$  from \eqref{eq:mubound} we have $(\beta/2)\sqrt{d_1}\geq \gamma_0d_1\sqrt{n/m}\geq \gamma_0\mu.$
Then by \eqref{eq:twosided},  
\begin{align}
    &\mathbb P\left( | f_{\mathcal L_+(x,y)}(X)-\mathbb Ef_{\mathcal L_+(x,y)}(X)|\geq (\beta/2)\sqrt{d_1}\right) \notag\\
    =& \mathbb P\left( | f_{\mathcal L_+(x,y)}(X)-\mathbb Ef_{\mathcal L_+(x,y)}(X)|\geq \gamma_0\mu+ (\beta/2)\sqrt{d_1}-\gamma_0\mu\right) \notag\\
    \leq &  2\exp\left( -\frac{c_0(\frac{\beta}{2}\sqrt{d_1}-\gamma_0\mu )^2}{\frac{2d_1}{m}+\frac{2}{3}\frac{\sqrt{d_1}}{m}(\frac{\beta}{2}\sqrt{d_1}-\gamma_0\mu)}\right) 
\leq 2\exp \left( -\frac{3c_0\beta^2m}{24+4\beta}\right),
\end{align}
where in the last inequality we use the fact that $t\mapsto \frac{t^2}{a+bt}$ with $a,b>0$ is increasing for $t\geq 0$.
The same inequality holds for $\mathcal L_{-}(x,y)$. Therefore \eqref{eq:lightinequality1} holds. 
\end{proof}
We now consider the heavy part. We will follow the notations used in Section 6 of \cite{cook2018size} and begin with the definition of the discrepancy property for $X$.
\begin{definition}[discrepancy property for $X$]
Let $X$ be the biadjacency matrix of a $(n,m,d_1,d_2)$-bipartite biregular graph. For any $S\subset [n]$,  $T\subset [m],$ define
$
    e(S,T):=\sum_{u\in S,v\in T}X_{uv}.
$
We say $A$ satisfies the \textit{discrepancy property} with  $\delta\in(0,1),\kappa_1<1$ and $\kappa_2\geq 0$, denoted by $\textnormal{DP}(\delta,\kappa_1,\kappa_2)$, if for all non-empty subsets $S\subset [n], T\subset [m]$, at least one of the following holds:
\begin{enumerate}
    \item $e(S,T)\leq \kappa_1\delta |S||T|$,\label{case1}
    \item $e(S,T)\log\left(\frac{e(S,T)}{\delta |S||T|}\right)\leq \kappa_2(|S|\vee |T|) \log \left(\frac{em}{|S|\vee |T|}\right)$.\label{case2}
\end{enumerate}
\end{definition}
Since $X$ is not Hermitian, following the same proof from Lemma 6.4 in \cite{cook2018size} with some modification for our model, we obtain the following lemma.
\begin{lemma}\label{lem:DPproperty}
Let $X$ be the biadjacency matrix of a random $(n,m,d_1,d_2)$-bipartite biregular graph. For any $K> 0$,
with probability at least $1-m^{-K}$, the discrepancy property $\textnormal{DP}(\delta,\kappa_1,\kappa_2)$ holds for $X$ with $\delta=\frac{d_1}{m}$,
$\kappa_1=e^2(1+\gamma_0)^2,$ and $ \kappa_2=\frac{2}{c_0}(1+\gamma_0)(K+4)$.
\end{lemma}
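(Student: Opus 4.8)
The plan is to mimic the proof of Lemma 6.4 in \cite{cook2018size}, replacing the concentration input for linear functions of a random regular graph by Corollary \ref{cor:concentration}, and adapting the counting to the bipartite biregular setting. First I would fix nonempty $S\subset[n]$, $T\subset[m]$, set $Q=\1_S\1_T^\top$ (entries in $[0,1]$, so $a=1$), and note $f_Q(X)=e(S,T)$ with $\mu=\frac{d_1}{m}|S||T|=\delta|S||T|$ and $\tilde\sigma^2=\frac{d_1}{m}|S||T|=\mu$. Plugging into the one-sided bound \eqref{eq:oneside} with threshold $t$ chosen so that $\mu+\gamma_0\mu+t = e(S,T)$, i.e.\ $t = e(S,T)-(1+\gamma_0)\mu$, gives
\begin{align*}
\prb\big(e(S,T)\ge e(S,T)\big)\le \exp\!\left(-c_0\,\mu\, h\!\left(\frac{e(S,T)-(1+\gamma_0)\mu}{\mu}\right)\right),
\end{align*}
valid when $e(S,T)\ge(1+\gamma_0)\mu$ so that $t\ge0$. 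Using the elementary bound $h(x)\ge \frac{x}{2}\log(1+x)$ for $x\ge 0$ (or $h(x)\ge x\log x - x$ after rescaling), this tail is at most $\exp\!\big(-\tfrac{c_0}{4} e(S,T)\log\frac{e(S,T)}{\kappa_1\delta|S||T|}\big)$ once $e(S,T) > \kappa_1\delta|S||T|$ with $\kappa_1 = e^2(1+\gamma_0)^2$; the point of the factor $e^2(1+\gamma_0)^2$ is exactly to absorb the $(1+\gamma_0)$ shift and to force the logarithm to exceed a fixed constant, turning $h$ into something comparable to $e(S,T)\log\frac{e(S,T)}{\delta|S||T|}$.

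Next I would set up the union bound over all pairs $(S,T)$. The number of pairs with $|S|=s$, $|T|=t$ is $\binom ns\binom mt \le \left(\frac{em}{s}\right)^{s}\left(\frac{em}{t}\right)^{t}\le \exp\!\big(2(s\vee t)\log\frac{em}{s\vee t}\big)$ (using $n\le m$ is not needed; one keeps both factors and bounds each by $s\vee t$). So the failure probability of DP over all pairs is at most
\begin{align*}
\sum_{s,t}\exp\!\left(2(s\vee t)\log\frac{em}{s\vee t}-\frac{c_0}{4}\,e(S,T)\log\frac{e(S,T)}{\delta|S||T|}\right),
\end{align*}
restricted to those $(S,T)$ for which neither case \eqref{case1} nor case \eqref{case2} holds. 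On that set, $e(S,T)\log\frac{e(S,T)}{\delta|S||T|} > \kappa_2(s\vee t)\log\frac{em}{s\vee t}$ by the negation of \eqref{case2}, so the exponent is at most $\big(2-\frac{c_0\kappa_2}{4}\big)(s\vee t)\log\frac{em}{s\vee t}$. Choosing $\kappa_2 = \frac{2}{c_0}(K+4)$ — wait, the statement has $\frac{2}{c_0}(1+\gamma_0)(K+4)$; the extra $(1+\gamma_0)$ is harmless slack — makes this exponent at most $-(K+2)(s\vee t)\log\frac{em}{s\vee t}\le -(K+2)\log(em)$ for each term, and summing the geometric-type series over $1\le s\le n$, $1\le t\le m$ (at most $m^2$ terms, each $\le (em)^{-(K+2)}$) yields a total at most $m^{-K}$. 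I would double-check the constant bookkeeping so the final bound is cleanly $m^{-K}$ rather than $C m^{-K}$, possibly by using $(K+4)$ with a bit of room as the paper does.

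The main obstacle I expect is the careful passage from the Bennett-type bound in terms of $h$ to the clean discrepancy dichotomy: one must verify that $c_0\mu\, h\!\big(\frac{e(S,T)-(1+\gamma_0)\mu}{\mu}\big)$ dominates a constant multiple of $e(S,T)\log\frac{e(S,T)}{\delta|S||T|}$ precisely on the regime $e(S,T) > \kappa_1\delta|S||T|$, and that the chosen $\kappa_1$ (not just $\kappa_1$ large, but this specific value tied to $(1+\gamma_0)$) and $\kappa_2$ make all inequalities go through uniformly in $s,t$. A secondary subtlety is that $X$ is rectangular and not symmetric, so unlike the regular-graph case one cannot assume $|S|\le|T|$ by symmetry; one must carry $(|S|\vee|T|)$ throughout and make sure the entropy bound $\log\binom ns + \log\binom mt \le 2(s\vee t)\log\frac{em}{s\vee t}$ is applied consistently. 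Once these constants are pinned down, the argument is a routine adaptation of \cite[Lemma 6.4]{cook2018size}.
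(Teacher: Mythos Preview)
Your plan follows the same route as the paper: apply Corollary~\ref{cor:concentration} with $Q=\mathbf 1_S\mathbf 1_T^\top$, then union bound using the entropy estimate for $\binom ns\binom mt$. Two points need tightening, though. First, as written you let the tail threshold equal $e(S,T)$, which is the random variable itself; the inequality $\prb(e(S,T)\ge e(S,T))\le\cdots$ is vacuous. The paper avoids this circularity by fixing, for each pair $(S,T)$, a \emph{deterministic} level $\gamma=\max(\gamma^*,\gamma_1)$ with $\gamma_1=\kappa_1-1$ and $\gamma^*$ the unique solution of $c_0\,h(\gamma^*-\gamma_0)\,\mu(S,T)=(K+4)(|S|\vee|T|)\log\frac{em}{|S|\vee|T|}$. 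One first proves $\prb\big(\exists S,T:\ e(S,T)>(1+\gamma)\mu(S,T)\big)\le m^{-K}$ directly from \eqref{eq:oneside} and the union bound, and only afterward verifies deterministically that $e(S,T)\le(1+\gamma)\mu(S,T)$ forces one of the two discrepancy alternatives. Your ``direct'' argument can be repaired the same way, but you must introduce such a non-random threshold.

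Second, the factor $(1+\gamma_0)$ in $\kappa_2$ is not slack: it is exactly what falls out of the inequality
\[
\frac{h(\gamma^*-\gamma_0)}{1+\gamma^*}\ \ge\ \frac{1}{2(1+\gamma_0)}\,\log(1+\gamma^*)\ =\ \frac{1}{2(1+\gamma_0)}\,\log\frac{e(S,T)}{\delta|S||T|}\quad(\gamma^*\ge\gamma_1),
\]
which is the precise conversion from the Bennett function $h$ to the $e(S,T)\log\frac{e(S,T)}{\delta|S||T|}$ form of case~(\ref{case2}); this is the ``careful passage'' you flagged, and it is where the specific value $\kappa_1=e^2(1+\gamma_0)^2$ is used. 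A minor correction: you do use $n\le m$ (which holds since $d_1\ge d_2$) to bound $\binom ns\le(en/s)^s\le(em/s)^s$ before absorbing both binomials into $2(|S|\vee|T|)\log\frac{em}{|S|\vee|T|}$.
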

\begin{proof} 
Let $Q=\mathbf{1}_S\mathbf{1}_{T}^{\top},$ where $\mathbf{1}_S\in \{0,1\}^n$ and $\mathbf{1}_T\in \{0,1\}^m$ are  the indicator vectors of the set $S$ and $T$, respectively.  We have 
\begin{align*}
    f_Q(X)=\sum_{u\in [n],v\in [m]}(\mathbf{1}_S)_{u}(\mathbf{1}_T)_{v}X_{uv}=e(S,T).
\end{align*}
Denote 
$\mu(S,T):=\mathbb Ee(S,T)=\frac{d_1}{m}|S||T|=\delta |S||T|.$
Recall $\gamma_0=\frac{d_1}{m-d_1}$.
For fixed $K>0$, let $\gamma_1=e^2(1+\gamma_0)^2-1$ and $\gamma=\gamma(S,T,m)=\max(\gamma^*,\gamma_1)$, where $\gamma^*$ is the unique $x$ such that
\begin{align}\label{eq:identity}
    c_0h(x-\gamma_0)\mu(S,T)=(K+4)(|S|\vee |T|)\log \left(\frac{em}{|S|\vee |T|}\right).
\end{align}
Note that we have  $\gamma\geq \gamma_1\geq \gamma_0$. Taking $a=1$ and 
$\tilde{\sigma}^2=\frac{d_1}{m}\sum_{u\in [n],v\in [m]}Q_{uv}^2=\frac{d_1}{m}|S||T|=\mu(S,T)$
in \eqref{eq:oneside}, we obtain
\begin{align*}
    \mathbb P(e(S,T)\geq (1+\gamma)\mu(S,T))=& \mathbb P(f_Q(X)\geq (1+\gamma)\mathbb Ef_Q(X))\\
    \leq & \exp\left(-c_0\tilde{\sigma}^2h\left(
    \frac{(\gamma-\gamma_0)\mu(S,T)}{\tilde{\sigma}^2}\right)\right)
    =\exp\left(-c_0\mu(S,T)h\left(
    \gamma-\gamma_0\right)\right).
\end{align*}
Then 
\begin{align*}
  &\mathbb P(\exists S\subset [n], T\subset [m]  ,|S|=s, |T|=t, e(S,T)\geq (1+\gamma)\mu(S,T))\\
  \leq  &\sum_{S\subset [n],|S|=s}\sum_{T\subset [m], |T|=t}\exp(-c_0h(\gamma-\gamma_0)\mu(S,T))   \\
  \leq &\sum_{S\subset [n],|S|=s}\sum_{T\subset [m], |T|=t}\exp(-c_0h(\gamma^*-\gamma_0)\mu(S,T)) \\
  = & {n\choose s}{m\choose t}\exp\left(-(K+4)(s\vee t)\log \left(\frac{em}{s\vee t}\right)\right)
  \leq \left(\frac{ne}{s}\right)^s\left(\frac{me}{t}\right)^t\exp\left(-(K+4)(s\vee t)\log \left(\frac{em}{s\vee t}\right)\right)\\
     \leq & \exp\left(-(K+2)(s\vee t)\log \left(\frac{em}{s\vee t}\right)\right)\leq \exp(-(K+2)\log(em)),
\end{align*}
where in the third line we use the fact that $h(x)$ is increasing for $x\geq 0$, and in the last line we 
use the fact that $x\mapsto x\log(e/x)$ is increasing on $[0,1]$. Taking a union bound over all $s\in [n],t\in[m]$, we have 
\begin{align}\label{eq:event}
 &\mathbb P(\exists S\in [n], T\in [m],  e(S,T)\geq (1+\gamma)\mu(S,T)) \leq  nm\exp(-(K+2)\log (em))\leq m^{-K}.  
\end{align}
If the subsets $S,T$ satisfy $\gamma(S,T,m)=\gamma_1$, then from \eqref{eq:event} with probability at least $1-m^{-K}$, 
\begin{align*}
    e(S,T)\leq (1+\gamma_1)\mu(S,T)= e^2(1+\gamma_0)^2\delta |S||T|.
\end{align*}
Hence Case \ref{case1} holds with $\kappa_1=e^2(1+\gamma_0)^2$. Now assume the subsets $S,T$ satisfy $\gamma(S,T,m)=\gamma^*.$ From \eqref{eq:event},  with probability at least $1-m^{-K}$,
$e(S,T)\leq (1+\gamma^*)\mu(S,T).$ Therefore from  \eqref{eq:identity},
\begin{align}\label{eq:h}
    \frac{c_0}{1+\gamma^*} h(\gamma^*-\gamma_0)e(S,T)\leq (K+4)(|S|\vee |T|)\log \left(\frac{em}{|S|\vee |T|}\right).
\end{align}
It is shown in the proof of Lemma 6.4 in \cite{cook2018size} that 
\[ \frac{h(\gamma^*-\gamma_0)}{1+\gamma_*}\geq \frac{1}{2(1+\gamma_0)} \log\frac{e(S,T)}{\mu(S,T)}=\frac{1}{2(1+\gamma_0)} \log\frac{e(S,T)}{\delta|S||T|}.\]
Together with \eqref{eq:h}, it implies that when $\gamma^*\geq \gamma_1,$
\begin{align*}
    e(S,T)\log \frac{e(S,T)}{\delta|S||T|)}\leq \frac{2}{c_0}(1+\gamma_0)(K+4)(|S|\vee |T|)\log \frac{em}{|S|\vee|T|}.
\end{align*}
Then Case \ref{case2} follows with $\kappa_2=\frac{2}{c_0}(1+\gamma_0)(K+4)$. This completes the proof.
\end{proof}

Assuming the discrepancy property holds for $X$, the following lemma implies that the contribution from heavy tuples is $O(\sqrt{d_1})$. Since the proof is very similar to the proof of Lemma 6.6 in \cite{cook2018size}, we omit the details.

\begin{lemma}\label{lem:DPa}
Let $X$ be the biadjacency matrix of a $(n,m,d_1,d_2)$-bipartite biregular graph. Suppose $X$ has the discrepancy property $\textnormal{DP}(\delta,\kappa_1,\kappa_2)$ with $\delta, \kappa_1,\kappa_2$ given in Lemma \ref{lem:DPproperty}. Then for any $(x,y)\in S^{n-1}\times S_0^{m-1}$,  we have
$f_{\mathcal H(x,y)}(X)\leq \alpha_0\sqrt{d_1}$
with $\alpha_0=48+32\kappa_1+64\kappa_2\left(1+\frac{1}{\kappa_1\log\kappa_1} \right).$
\end{lemma}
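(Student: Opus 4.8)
The plan is to follow the dyadic decomposition argument of Kahn--Szemer\'edi as carried out in the proof of Lemma~6.6 of \cite{cook2018size}, adapted to the bipartite biregular setting. First I would fix $(x,y)\in S^{n-1}\times S_0^{m-1}$ and, for integers $i,j\geq 0$, partition the coordinates of $x$ into level sets $B_i=\{u: |x_u|\in (2^{-i-1}\sqrt{2/n}\,? \text{-type scale}]\}$ and similarly the coordinates of $y$ into level sets $C_j$, so that each heavy pair $(u,v)\in\mathcal H(x,y)$ lands in some block $B_i\times C_j$; the constraints $\|x\|_2=\|y\|_2=1$ control the cardinalities $|B_i|,|C_j|$ and the common magnitudes on each block. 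On the block $B_i\times C_j$ one writes $|f_{B_i\times C_j}(X)|\le (\text{magnitude on }B_i)\cdot(\text{magnitude on }C_j)\cdot e(B_i,C_j)$, and then invokes the discrepancy property $\textnormal{DP}(\delta,\kappa_1,\kappa_2)$ with $\delta=d_1/m$ to bound $e(B_i,C_j)$: either Case~\eqref{case1} gives $e(B_i,C_j)\le\kappa_1\delta|B_i||C_j|$, which combines with the $\ell_2$ normalization to yield a geometrically summable contribution, or Case~\eqref{case2} gives the ``entropy'' bound $e(B_i,C_j)\log\!\big(\tfrac{e(B_i,C_j)}{\delta|B_i||C_j|}\big)\le \kappa_2(|B_i|\vee|C_j|)\log\!\big(\tfrac{em}{|B_i|\vee|C_j|}\big)$, from which one extracts a bound on $e(B_i,C_j)$ itself via the elementary fact that $x\log(x/a)\le b$ forces $x\lesssim a + b/\log(b/a)$ (this is where the $1+\tfrac{1}{\kappa_1\log\kappa_1}$ factor in $\alpha_0$ comes from).

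The second step is to organize the sum $\sum_{i,j}|f_{B_i\times C_j}(X)|$ over all blocks. Following \cite{cook2018size}, I would separate the blocks into those where Case~\eqref{case1} is used and those where Case~\eqref{case2} is used. For the Case~\eqref{case1} blocks, the bound on each block is of the form $(\text{scale}_i)(\text{scale}_j)\kappa_1\delta|B_i||C_j|\lesssim \kappa_1\sqrt{d_1}\cdot(\text{small factors in }i,j)$ after using $|B_i|\cdot\text{scale}_i^2\le\|x\|_2^2=1$ and similarly for $j$, and the double geometric series in $i,j$ converges to a constant. For the Case~\eqref{case2} blocks one splits further according to whether $|B_i|\ge|C_j|$ or not, and in each sub-case the entropy bound, together with $e(B_i,C_j)\le d_1|B_i|$ (degree bound) and $e(B_i,C_j)\le d_2|C_j|$, plus the normalization constraints, again produces a convergent series whose sum is $O(\kappa_2\sqrt{d_1})$. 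Summing the two contributions gives $f_{\mathcal H(x,y)}(X)\le\alpha_0\sqrt{d_1}$ with $\alpha_0=48+32\kappa_1+64\kappa_2\big(1+\tfrac{1}{\kappa_1\log\kappa_1}\big)$.

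The only genuinely new bookkeeping relative to \cite{cook2018size} is that the two sides of the bipartition have different sizes ($n$ versus $m$) and different degrees ($d_1\ge d_2$), so the level sets for $x$ and for $y$ must be indexed and summed independently rather than symmetrically, and the normalization $\sum y_v=0$ (rather than just $\|y\|_2=1$) is what made the mean of the light part small in Lemma~\ref{lem:E} but plays no role here. The main obstacle, and the reason the proof is only sketched, is carrying out the two-dimensional dyadic summation carefully enough to track the absolute constants: one must verify that every geometric series in $i$ and in $j$ has ratio bounded away from $1$ (using $|x_uy_v|>\sqrt{d_1}/m$ on heavy pairs together with $\|x\|_2=\|y\|_2=1$ to cap the number of relevant levels), and that the Case~\eqref{case2} extraction inequality is applied with the right branch of the $W$-type function so that the $\kappa_1\log\kappa_1$ in the denominator is legitimate. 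Since these are exactly the estimates performed in \cite[Lemma~6.6]{cook2018size} with $m$ and $d_1$ playing the roles of $n$ and $d$, and with the heavy-pair threshold $\sqrt{d_1}/m$ replacing $\sqrt{d}/n$, we omit the details.
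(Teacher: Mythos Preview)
Your proposal is correct and matches the paper's approach: the paper itself omits the proof entirely, stating only that it is ``very similar to the proof of Lemma~6.6 in \cite{cook2018size},'' and your sketch accurately lays out that Kahn--Szemer\'edi dyadic decomposition argument with the appropriate bipartite modifications (threshold $\sqrt{d_1}/m$, asymmetric level sets for $x$ and $y$, and $\delta=d_1/m$). There is nothing to add beyond what you have written.
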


Now we finish the proof of Theorem \ref{thm:main1} with the $\epsilon$-net argument. The following lemma is standard and we omit the proof. 
\begin{lemma}\label{eq:epsilonnetapprox}
For $\epsilon\in (0,1/2)$, let  $\N_{\epsilon}$ be an $\epsilon$-net of $S^{n-1}$ and $\N_{\epsilon}^0$ be an $\epsilon$-net of $S_0^{m-1}$. Let $X$ be the biadjacency matrix of a $(n,m,d_1,d_2)$-bipartite biregular graph. Then
\begin{align}
   \sigma_2(X)=\sup_{x\in S^{n-1},y\in S_0^{m-1}} \langle x,Xy\rangle\leq \frac{1}{1-2\epsilon}\sup_{x\in \N_{\epsilon}, y\in \N_{\epsilon}^0} |\langle x, Xy\rangle |. \notag
\end{align}
\end{lemma}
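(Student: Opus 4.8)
The plan is to first turn the definition of $\sigma_2(X)$ into the variational formula for $\langle x,Xy\rangle$ that appears in the statement, and then to run the routine $\epsilon$-net comparison, taking care that every approximation error stays inside the hyperplane $\mathbf{1}_m^\perp$ so that it is controlled by $\sigma_2(X)$ and not by $\sigma_1(X)=\sqrt{d_1d_2}$.

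\emph{Step 1: the variational identity.} I would first observe that $X^\top X$ is a nonnegative symmetric matrix with constant row sums $d_1d_2$ (for a fixed $v\in[m]$ the row sum equals $\sum_{v'}\sum_u X_{uv}X_{uv'}=\sum_u X_{uv}d_1=d_1d_2$), so its spectral radius is $d_1d_2$ and $\mathbf{1}_m/\sqrt m$ is a top right singular vector; hence $\sigma_1(X)=\sqrt{d_1d_2}$. Since $X^\top X$ is symmetric and fixes $\mathbf{1}_m$, the subspace $\mathbf{1}_m^\perp=\mathrm{span}(S_0^{m-1})$ is invariant under $X^\top X$, and the eigenvalues of its restriction are exactly $\sigma_2(X)^2\ge\cdots\ge\sigma_m(X)^2$. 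Consequently $\sigma_2(X)=\max_{y\in S_0^{m-1}}\|Xy\|$, and since $\sup_{x\in S^{n-1}}\langle x,Xy\rangle=\|Xy\|$ for each fixed $y$, the first equality in the statement follows. The same argument also yields $\|Xw\|\le\sigma_2(X)\,\|w\|$ for every $w\perp\mathbf{1}_m$, which is the only analytic input needed below.

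\emph{Step 2: the net comparison.} By compactness of $S^{n-1}\times S_0^{m-1}$ and continuity, pick $(x^*,y^*)$ attaining the supremum from Step 1, and choose $x\in\N_\epsilon$, $y\in\N_\epsilon^0$ with $\|x-x^*\|\le\epsilon$ and $\|y-y^*\|\le\epsilon$. Then decompose
\[
\langle x^*,Xy^*\rangle=\langle x,Xy\rangle+\langle x^*-x,Xy^*\rangle+\langle x,X(y^*-y)\rangle .
\]
Because $y^*\perp\mathbf{1}_m$, Cauchy--Schwarz together with Step 1 gives $|\langle x^*-x,Xy^*\rangle|\le\|x^*-x\|\,\|Xy^*\|\le\epsilon\,\sigma_2(X)$; because $y^*,y\in S_0^{m-1}$ we have $y^*-y\perp\mathbf{1}_m$, so likewise $|\langle x,X(y^*-y)\rangle|\le\|X(y^*-y)\|\le\epsilon\,\sigma_2(X)$. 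Hence $\sigma_2(X)\le\sup_{x\in\N_\epsilon,\,y\in\N_\epsilon^0}|\langle x,Xy\rangle|+2\epsilon\,\sigma_2(X)$, and rearranging (using $\epsilon<1/2$) gives the claimed bound.

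\emph{Main obstacle.} There is essentially no obstacle here — it is a packing estimate — but the one point that must not be overlooked is that $\N_\epsilon^0$ is an $\epsilon$-net of $S_0^{m-1}$ rather than of the full sphere $S^{m-1}$. This is exactly what forces the error direction $y^*-y$ to remain in $\mathbf{1}_m^\perp$; had one approximated $y^*$ by a point of a generic net, one would only get $\sigma_2(X)\le\frac{1}{1-2\epsilon}\big(\sup_{\N}|\langle x,Xy\rangle|+\epsilon\sqrt{d_1d_2}\big)$, which is useless because $\sqrt{d_1d_2}$ is the trivial top singular value. Finite $\epsilon$-nets of both spheres exist by compactness, so the statement is non-vacuous.
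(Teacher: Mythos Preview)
Your proof is correct. The paper itself omits the proof of this lemma as standard, so there is nothing to compare; your argument supplies exactly the routine net comparison the author has in mind, and your remark that the error $y^*-y$ must stay in $\mathbf 1_m^\perp$ (so that it is controlled by $\sigma_2(X)$ rather than $\sigma_1(X)$) is precisely the reason the net $\N_\epsilon^0$ is taken on $S_0^{m-1}$.
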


\begin{proof}[Proof of Theorem \ref{thm:main1}]
 Fix $K>0$. By Lemma \ref{lem:DPproperty}, with probability at least $1-m^{-K}$, $X$ has $\textnormal{DP}(\delta,\kappa_1,\kappa_2)$ property. Let $\mathcal D$ be the event that this property holds. Take $\epsilon=1/4$ in Lemma \ref{eq:epsilonnetapprox}.
 By taking the union bound over $\N_{\epsilon}\times \N_{\epsilon}^0$, we have for any $\alpha>0$,
 \begin{align} 
     \mathbb P\left(\mathcal D\cap \left\{ \sigma_2(X)\geq \alpha\sqrt{d_1}\right\}\right)&\leq \sum_{x\in \N_{\epsilon}, y\in \N_{\epsilon}^0}\mathbb P\left(\mathcal D\cap \left\{ |\langle x,Xy\rangle |\geq \alpha/2\sqrt{d_1}\right\}\right).
 \end{align}
 For any fixed $(x,y)\in S^{n-1}\times S_0^{m-1}$ and any $\alpha>2\alpha_0$ with $\alpha_0$ given in Lemma \ref{lem:DPa}, from our analysis of the heavy couples, we have 
 \begin{align*}
 \mathbb P\left(\mathcal D\cap \left\{ |\langle x,Xy\rangle |\geq \alpha/2\sqrt{d_1}\right\}\right)&\leq  \mathbb P\left(\mathcal D\cap \left\{ |f_{\mathcal L(x,y)}(X)|\geq \alpha/2\sqrt{d_1}-|f_{\mathcal H(x,y)}(X)|\right\}\right)\\
 &\leq \mathbb P\left( |f_{\mathcal L(x,y)}(X)|\geq (\alpha/2-\alpha_0)\sqrt{d_1} \right).
 \end{align*}
 Take $\beta=\alpha/2-\alpha_0-1$. When $\beta>2\gamma_0\sqrt{d_2}$,  from \eqref{eq:lightinequality1}, we have 
 \begin{align*}
\mathbb P\left( |f_{\mathcal L(x,y)}(X)|\geq (\alpha/2-\alpha_0)\sqrt{d_1}\right)\leq 4\exp\left(-\frac{3c_0\beta^2m}{24+4\beta}\right).
 \end{align*}
When $\epsilon=1/4$, there exist $\epsilon$-nets $\N_{\epsilon}$ and $\N_{\epsilon}^0$ such that $|\mathcal N_{\epsilon}|\leq 9^n$ and  $|\mathcal N_{\epsilon}^0|\leq 9^m$.  Then 
 \begin{align}\label{eq:sigmabeta}
     \mathbb P\left(\mathcal D\cap \left\{ \sigma_2(A)\geq \alpha\sqrt{d_1}\right\}\right)&\leq 4\cdot 9^{n+m}\exp\left(-\frac{3c_0\beta^2m}{24+4\beta}\right)\leq  9^{2m+1}\exp\left(-\frac{3c_0\beta^2m}{24+4\beta}\right).
 \end{align}
  Then
$c_0\geq \frac{1}{6}, 2\gamma_0 \sqrt{d_2}= \frac{2d_2\sqrt{d_2}}{n-d_2}\leq \frac{4d_2\sqrt{d_2}}{n}\leq \sqrt{2}.$
Recall $\kappa_1=e^2(1+\gamma_0)^2, \kappa_2=\frac{2}{c_0}(1+\gamma_0)(K+4)$. We have 
$e^2\leq \kappa_1\leq 4e^2,  \kappa_2\leq 24(K+4) .$
Then $\alpha_0=32\kappa_1+48+64\kappa_2\left(1+\frac{1}{\kappa_1\log\kappa_1}\right)$
is a  constant depending on  $K$. Note that $\alpha=2\beta+2\alpha_0+2$.  Take $\beta>\sqrt{2}\geq 2\gamma_0\sqrt{d_2}$ to be a sufficiently large constant independent of $m$ such that
$9^{2m+1}\exp\left(-\frac{3c_0\beta^2m}{24+4\beta}\right)\leq e^{-m}. $
Then $\alpha$ is a constant  depending only on $K$. From \eqref{eq:sigmabeta}, 
 \begin{align*}
  \mathbb P\left( \sigma_2(X)\geq \alpha\sqrt{d_1} \right)\leq \mathbb P(\mathcal D^c)+  \mathbb P\left(\mathcal D\cap \left\{ \sigma_2(X)\geq \alpha\sqrt{d_1}\right\}\right)\leq m^{-K}+e^{-m}. 
 \end{align*}
 This completes the proof of Theorem \ref{thm:main1}.
\end{proof}

\section{Higher order switching}\label{sec:highswitching}

{The size-biased coupling can be used to study any random variable that permits a size-biased coupling with good properties, see the general theorem stated in Lemma \ref{thm:tailestimate1}, and more  applications of size-biased coupling in \cite{cook2018size,arratia2019size,ghosh2011concentration}.
From  Lemma \ref{lem:biasindicator}, it's easy to construct such sized-biased coupling for linear forms of $A$, since it is a weighted sum of indicator random variables, and the switching operators were used to create a coupling that has good control of $\mathcal B$ and $D$ in the statement of Lemma \ref{thm:tailestimate1}.
 The limitation of this argument is that one cannot get a sharp constant in front of $\sqrt{d_1-1}+\sqrt{d_2-1}$ with the use of  an $\epsilon$-net. This size-biased coupling technique is used to prove concentration of  $x^\top Ay$ for a random matrix $A$ and two fixed vectors $x,y$. After passing through the $\epsilon$-net, the constants in front of $\sqrt{d_1-1}+\sqrt{d_2+1}$ will depend on the size of the net.	
 It's possible that one could come up with a better size-biased coupling without taking a union bound over the $\epsilon$-net to get the sharp constant.
 
 In this section, we modify our switching operations and apply the previous analysis to $M:=XX^\top-d_1I$. By applying an $\epsilon$-net argument to $M$, we won't be able to capture the sharp dependence on $d_2$, but it allows us to find a sharp dependence on $d_1$.

  Now we move on to the proof of Theorem \ref{thm:main2}. 
For a given $n\times n$ symmetric matrix $Q$, the linear function $f_Q(M)$ can be written as
\begin{align*}
    f_Q(M)&=\sum_{u_1,u_2\in [n]}Q_{u_1u_2}M_{u_1u_2}=\sum_{u_1\not= u_2,v\in [m]}Q_{u_1u_2}X_{u_1v}X_{u_2v}.
\end{align*}
We see that $f_Q(M)$ is a linear combination of $n(n-1)m$ many indicator random variables $X_{u_1v}X_{u_2v}$. Then to construct a size biased coupling for $f_Q(M)$, according to Lemma \ref{lem:biasindicator}, we need to construct a coupling $(X,X^{(u_1u_2v_1)})$ where $X^{(u_1u_2v_1)}$ is distributed as $X$ conditioned on the event $X_{u_1v_1}X_{u_2v_1}=1.$  

To create a size biased coupling with the desired property, we introduce three types of switchings involving more vertices.

\begin{definition}[valid Type 1 and Type 2 switchings]\label{def:12}
Assume $X_{u_1v_1}=X_{u_2v_2}=X_{u_3v_1}=X_{u_4v_3}=1, X_{u_2v_1}=X_{u_3v_3}=X_{u_4v_2}=0.$ We define 
$(u_1,u_2,u_3,u_4,v_1,v_2,v_3)$ to be a  \textit{valid Type 1 forward switching} if after the switching, the edges $u_2v_1, u_3v_3, u_4v_2$ are added and the edges $u_2v_2,u_3v_1,u_4v_3$ are removed. 
 
For valid Type 1 switchings, the appearance of the edge $u_1v_1$ is not changed. Similarly, if 
$X_{u_1v_2}=X_{u_2v_2}=X_{u_3v_1}=X_{u_4v_3}=1, X_{u_1v_1}=X_{u_3v_3}=X_{u_4v_2}=0.$
We define 
$(u_1,u_2,u_3,u_4,v_1,v_2,v_3)$ to be a  \textit{valid Type 2 forward switching}. 
For valid Type 2 switchings, the appearance of the edge $u_2v_1$ is not changed. Similar to Definition \ref{def:switch}, the backward switchings are defined accordingly.
\end{definition}
\begin{figure}[ht]
    \centering
    \includegraphics[width=0.45\linewidth]{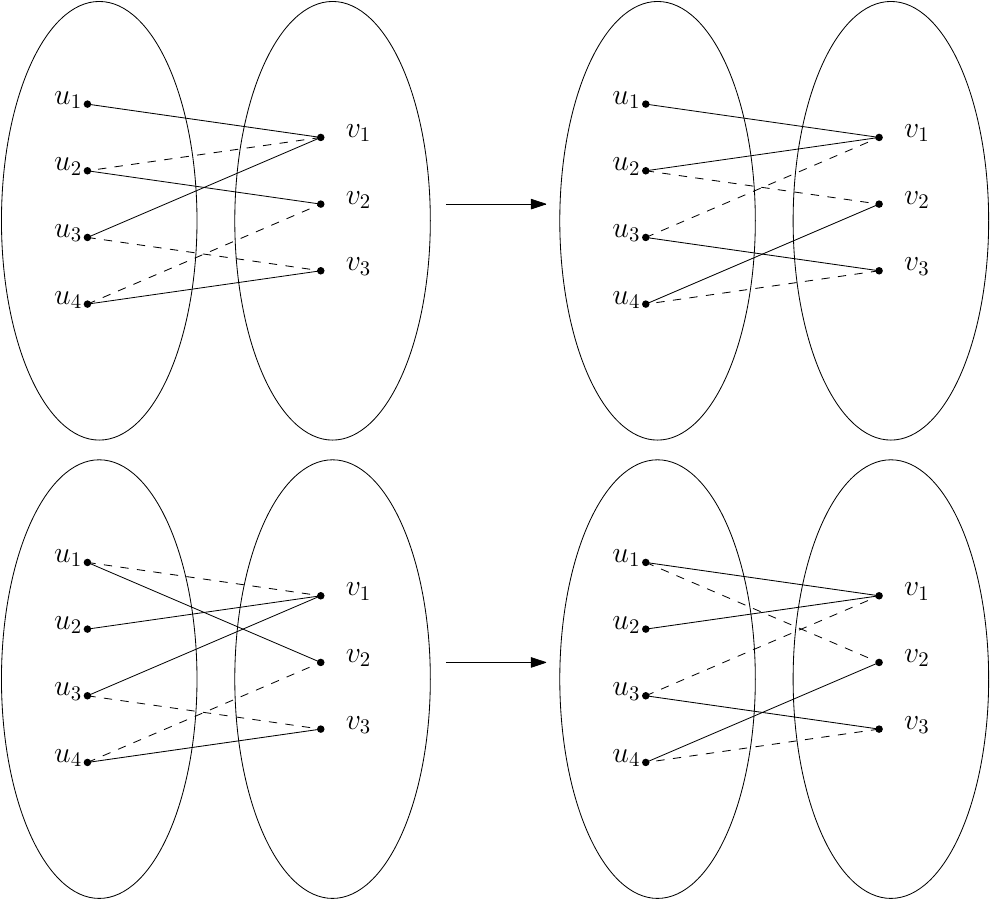}
    \caption{ valid Type 1 and Type 2 forward switchings for fixed $u_1,u_2,v_1$}
    \label{fig:my_label}
\end{figure}

See Figure \ref{fig:my_label} for an example of Definition \ref{def:12}. In addition, we  define the following  switching as a combination of Type 1 and Type 2 switchings.
\begin{definition}[valid Type 3 switchings]\label{def:type3}
Assume $X_{u_1v_4}=X_{u_2v_2}=X_{u_3v_1}=X_{u_4v_3}=X_{u_5v_1}=X_{u_6v_5}=1$ and 
$X_{u_1v_1}=X_{u_2v_1}=X_{u_3v_3}=X_{u_4v_2}=X_{u_5v_5}=X_{u_6v_4}=0.$ we define 
$(u_1,u_2,u_3,u_4,u_5,u_6,v_1,v_2,v_3,v_4,v_5)$ to be a  \textit{valid Type 3 forward switching} if after the switching, the edges $u_1v_1, u_2v_1, u_3v_3,u_4v_2,u_5v_5,u_6v_4$ are added and the edges $u_2v_1,u_3v_1,u_4v_3$ are removed.  
\end{definition}
\begin{figure}[ht]
    \centering
    \includegraphics[width=0.5\linewidth]{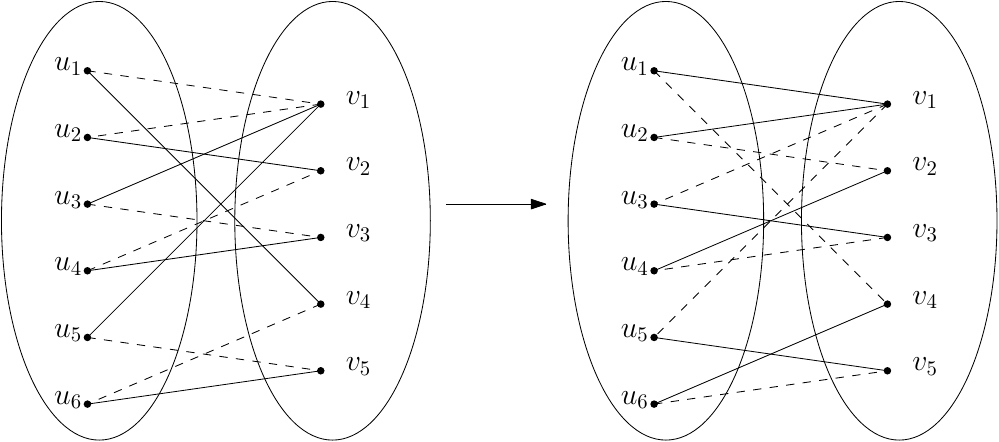}
    \caption{a valid Type 3 forward switching for fixed $u_1,u_2,v_1$}
    \label{fig:my_label12}
\end{figure}

See Figure \ref{fig:my_label12} for an example of Definition \ref{def:type3}. 
 We first estimate the number of all types of switchings for a given bipartite biregular graph. The argument is similar to the proof of Lemma \ref{lem:switchingcounts}.

\begin{lemma} \label{lem:switchingcounts1}
Consider an $(n,m,d_1,d_2)$-bipartite biregular graph $G$. Let $s^{(i)}_{u_1u_2v_1}(G), 1\leq i\leq 3$ be the number of valid Type $i$ forward switchings of the form
\[
    (u_1,u_2,\cdot,\cdot,v_1,\cdot,\cdot),\quad  (u_1,u_2,\cdot,\cdot,v_1,\cdot,\cdot), \quad (u_1,u_2,\cdot,\cdot,\cdot,\cdot,v_1,\cdot,\cdot,\cdot,\cdot),\]
respectively. Let $t^{(i)}_{u_1u_2v_1}(G)$ be the number of the corresponding valid Type $i$ backward switchings for $i=1,2,3$. Then the following inequalities hold:
\begin{enumerate}
    \item If $X_{u_1v_1}=1, X_{u_2v_1}=0$,
    \begin{align}
       d_1^2(d_2-1)(n-2d_2)\leq  s^{(1)}_{u_1u_2v_1}\leq d_1^2(d_2-1)(n-d_2).\label{eq:suv1}
    \end{align}
 If $X_{u_1v_1}=0, X_{u_2v_1}=1$,
    \begin{align}\label{eq:suv2}
d_1^2(d_2-1)(n-2d_2)\leq  s^{(2)}_{u_1u_2v_1}\leq d_1^2(d_2-1)(n-d_2).    \end{align}
      \item If $X_{u_1v_1}=0=X_{u_2v_1}=0$,
    \begin{align}
       d_1^4d_2(d_2-1)(n-d_2)(n-3d_2)\leq  s^{(3)}_{u_1u_2v_1}\leq d_1^4d_2(d_2-1)(n-d_2)^2.\label{eq:s3}
       \end{align}
         \item If $X_{u_1v_1}=X_{u_2v_1}=1$,
    \begin{align}
       d_1^2(n-d_2)(n-2d_2)\leq   &t^{(1)}_{u_1u_2v_1}\leq d_1^2(n-d_2)^2,\label{eq:t1}\\
       d_1^2(n-d_2)(n-2d_2)\leq   &t^{(2)}_{u_1u_2v_1}\leq d_1^2(n-d_2)^2,\label{eq:t2}
       \end{align}
       and 
       \begin{align}
        d_1^4(n-d_2)^2(n-d_2-1)(n-3d_2) \leq   &t^{(3)}_{u_1u_2v_1}\leq  d_1^4(n-d_2)^3(n-d_2-1) .\label{eq:t3}
    \end{align}
\end{enumerate}
 
\end{lemma}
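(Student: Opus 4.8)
The plan is to follow the same pattern as the proof of Lemma \ref{lem:switchingcounts}: for each switching type I would count the admissible tuples by choosing the free vertices one at a time, each from a suitable neighborhood $\N(\cdot)$ (of size $d_1$ for a vertex of $V_1$ and of size $d_2$ for a vertex of $V_2$) or co-neighborhood $\overline{\N}(\cdot)$ (of size $n-d_2$ for a vertex of $V_2$, or slightly less once some already-chosen vertices must be avoided), always ordering the selection so that every ``free'' co-neighborhood choice ranges over $V_1$; this is what makes the bounds come out in terms of $n$ and $d_2$ only. Dropping the constraints not yet enforced during the selection yields the upper bounds, and subtracting an over-count of the tuples that violate one of the remaining constraints yields the lower bounds.

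For \eqref{eq:suv1} I would fix $u_1,u_2,v_1$ with $X_{u_1v_1}=1$, $X_{u_2v_1}=0$ and pick, in order, $v_2\in\N(u_2)$ ($d_1$ ways, forcing $X_{u_2v_2}=1$), $u_3\in\N(v_1)\setminus\{u_1\}$ ($d_2-1$ ways, forcing $X_{u_3v_1}=1$ and $u_3\neq u_1$), $u_4\in\overline{\N}(v_2)$ ($n-d_2$ ways, forcing $X_{u_4v_2}=0$), and $v_3\in\N(u_4)$ ($d_1$ ways, forcing $X_{u_4v_3}=1$); since every valid Type $1$ forward switching arises this way, $s^{(1)}_{u_1u_2v_1}\le d_1^2(d_2-1)(n-d_2)$. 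A tuple produced this way fails to be a valid switching only if the leftover edge condition $X_{u_3v_3}=0$ fails or one of the distinctness conditions that is not automatic fails (a quick check shows the only non-automatic one is $u_4\neq u_1$); reordering the last two selections to $v_3\in\N(u_3)$, $u_4\in\N(v_3)$ shows the number of such bad tuples is at most of order $d_1^2 d_2(d_2-1)$, and subtracting gives the lower bound in \eqref{eq:suv1}. The bound \eqref{eq:suv2} for Type $2$ follows identically after relabeling, and the backward bounds \eqref{eq:t1} and \eqref{eq:t2} follow from the same argument with the roles of edges and non-edges exchanged, so the positive-neighborhood factors $d_1$ survive while the former $\N(v_1)$ factor becomes $\overline{\N}(v_1)$ of size $n-d_2$.

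For the Type $3$ bounds \eqref{eq:s3} and \eqref{eq:t3} I would use that a Type $3$ switching decomposes into a Type-$2$-like alternating $6$-cycle on $u_2,u_3,u_4,v_1,v_2,v_3$ and a Type-$1$-like alternating $6$-cycle on $u_1,u_5,u_6,v_1,v_4,v_5$, glued along $v_1$. Fixing $u_1,u_2,v_1$ with $X_{u_1v_1}=X_{u_2v_1}=0$, the first cycle is counted by $v_2\in\N(u_2)$, $u_3\in\N(v_1)$, $u_4\in\overline{\N}(v_2)$, $v_3\in\N(u_4)$ (at most $d_1^2 d_2(n-d_2)$ ways, with leftover condition $X_{u_3v_3}=0$) and the second by $v_4\in\N(u_1)$, $u_5\in\N(v_1)\setminus\{u_3\}$, $u_6\in\overline{\N}(v_4)$, $v_5\in\N(u_6)$ (at most $d_1^2(d_2-1)(n-d_2)$ ways, with leftover condition $X_{u_5v_5}=0$); multiplying gives the upper bound $d_1^4 d_2(d_2-1)(n-d_2)^2$. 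For the lower bound I would subtract, as before, the tuples violating a leftover edge condition inside one of the two cycles together with those in which a vertex of one cycle coincides with a vertex of the other; each such family is over-counted by a product smaller by a factor of order $d_2/(n-d_2)$, and each forced coincidence makes one co-neighborhood choice avoid a further set of size at most $2d_2$, which is what produces the ``$n-3d_2$'' in \eqref{eq:s3}. The backward count \eqref{eq:t3} follows in the same way by exchanging edges and non-edges, turning two of the $d_2$-sized positive choices into co-neighborhood choices of sizes $n-d_2$ and $n-d_2-1$.

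The hard part will be the Type $3$ case: keeping track of eleven vertices and all the distinctness conditions that couple the two alternating cycles, and in particular checking that every invalid configuration can be over-counted by a product of strictly smaller order than the main term $d_1^4 d_2(d_2-1)(n-d_2)^2$, so that the stated lower bounds genuinely hold. The Type $1$ and Type $2$ counts are routine once the right selection order is chosen.
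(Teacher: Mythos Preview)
Your proposal is correct and follows essentially the same route as the paper: for each switching type one chooses the free vertices sequentially from neighborhoods or co-neighborhoods, obtains the upper bound by dropping the leftover constraints, and then subtracts an over-count of tuples violating the one or two remaining edge conditions. Your selection orders for Types~1, 2 and their backward versions match the paper's verbatim.

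The only place you diverge slightly is the Type~3 lower bound. The paper does not track cross-cycle vertex coincidences separately at all; it simply observes that among the $d_1^4d_2(d_2-1)(n-d_2)^2$ tuples the only leftover constraints are $X_{u_3v_3}=0$ and $X_{u_5v_5}=0$, bounds each failure family by $d_1^4d_2^2(d_2-1)(n-d_2)$, and subtracts to get $(n-d_2)^2-2d_2(n-d_2)=(n-d_2)(n-3d_2)$. So the ``$n-3d_2$'' comes directly from subtracting the two edge-condition failures, not from forcing a co-neighborhood to avoid an extra set of size $2d_2$ as you describe. Your more cautious bookkeeping of distinctness and cross-cycle coincidences is not wrong, but it is unnecessary here: those cases are either automatic from the edge/non-edge constraints or are dominated by the same over-count, and tracking them only obscures the clean $(n-d_2)-2d_2$ arithmetic. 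The same remark applies to~\eqref{eq:t3}, where the two leftover conditions are $X_{u_2v_2}=0$ and $X_{u_1v_4}=0$.
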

\begin{proof} 
  For fixed $u_1,u_2,v_1$, assume $X_{u_1v_1}=1,X_{u_2v_1}=0$. We first consider $s^{(1)}_{u_1u_2v_1}$. We bound the number of all possible Type 1 forward switching by choosing $u_3\in \N(v_1)$ with $u_3\not=u_1$, $v_2\in \N(u_2)$,  $u_4\in \overline{\N}(v_2)$, and  $v_3\in \N(u_4)$. Here we require $u_3\not=u_1$  because after the switching, $u_3,v_1$ are not connected but $u_1,v_1$ are connected (see Figure \ref{fig:my_label}). There are $d_1^2(d_2-1)(n-d_2)$ many choices in total. This gives the upper bound on  $s^{(1)}_{u_1u_2v_1}$.  
    For the lower bound, we consider the number of tuples that do not allow a valid Type 1 forward switching among the $d_1^2(d_2-1)(n-d_2)$ many choices, and  we denote it by $K$. Then
$s^{(1)}_{u_1u_2v_1}=d_1^2(d_2-1)(n-d_2)-K.$
Any tuple chosen as above is not a valid switching if and only if $v_3\in \N(u_3)$. We upper bound $K$ by choosing $v_2\in\N(u_2)$, $u_3\in \N(v_1)$ with $u_3\not=u_1$, $v_3\in \N(u_3)$ and $u_4\in \N(v_3)$, making at most $d_1^2d_2(d_2-1)$ many choices in total. Hence $K\leq d_1^2d_2(d_2-1)$ and  $s^{(1)}_{u_1u_2v_1}\geq d_1^2(d_2-1)(n-2d_2).$
The estimate of Type 2 forward switchings follows from the symmetry of $u_1$ and $u_2$. Therefore \eqref{eq:suv1}  and \eqref{eq:suv2} hold.

Now assume $X_{u_1v_1}=X_{u_2v_1}=1$.  For fixed $(u_1,u_2,v_1)$, by choosing $ u_3\in \overline{\N}(v_1)$, $v_3\in \N(u_3)$,  $u_4\in \overline{\N}(v_3)$ and $v_2\in \N(u_4)$, there are at most $d_1^2(n-d_2)^2$ many choices.  
Among those tuples, the switching will fail if and only if $v_2\in \N(u_2)$. Let 
$t_{u_1u_2v_1}^{(1)}=d_1^2(n-d_2)^2-L,$
where $L$ is the number of invalid Type 1 backward switchings among the $d_1^2(n-d_2)^2$ many choices above. We can then bound $L$ by choosing $u_3\in \overline{\N}(v_1),v_2\in \N(u_2),v_3\in \N(u_3)$ and $u_4\in \N(v_2)$, which has at most $d_1^2d_2(n-d_2)$ many choices. Hence 
$
    d_1^2(n-d_2)(n-2d_2)\leq  t_{u_1u_2v_1}^{(1)}\leq d_1^2(n-d_2)^2.
$
The estimate of $t_{u_1u_2v_1}^{(2)}$ follows in the same way.  Then \eqref{eq:t1} and \eqref{eq:t2} hold.

It remains to show \eqref{eq:s3} and \eqref{eq:t3}. Assume $X_{u_1v_1}=X_{u_2v_1}=0$. By choosing $u_3\in \N(v_1), v_2\in \N(u_2), u_4\in \overline{\N}(v_2), v_3\in \N(u_4)$, $u_5\in \N(v_1)$ with $u_5\not=u_3$, $v_4\in\N(u_1), u_6\in \overline{\N}(v_4)$ and $v_5\in N(u_6)$, we have a total number of $d_1^2d_2(d_2-1)(n-d_2)^2$ tuples.  Therefore $s_{u_1u_2v_1}^{(3)}\leq d_1^4d_2(d_2-1)(n-d_2)^2.$
Among those tuples, a tuple is not valid Type 3 forward switching if and only if one of the two cases happens: (i) $v_3\in \N(u_3)$ or (ii) $v_5\in \N(u_5)$.  
Let 
$s_{u_1u_2v_1}^{(3)}= d_1^4d_2(d_2-1)(n-d_2)^2-K.$ For the first case, by choosing $u_3\in \N(v_1), v_2\in \N(u_2),v_3\in \N(u_3),u_4\in \N(v_3), u_5\in \N(v_1)$ with $u_5\not=u_3$, $v_4\in \N(v_1)$, $u_6\in \overline{\N}(v_4)$ and $v_5\in \N(u_5)$  there are at most $d_1^4d_2^2(d_2-1)(n-d_2)$ many tuples that are not valid. 
For the second case,  by choosing $u_3\in \N(v_1), v_2\in \N(u_2),u_4\in \overline{\N}(v_2),v_3\in \N(u_4), u_5\in \N(v_1)$ with $u_5\not=u_3$, $v_4\in \N(u_1)$, $v_5\in \N(u_5)$ and $u_6\in \N(v_5)$  there are at most $d_1^4d_2^2(d_2-1)(n-d_2)$ many tuples that are not valid.  From those two cases, we have
$K\leq 2d_1^4d_2^2(d_2-1)(n-d_2)$ and 
$ s_{u_1u_2v_1}^{(3)}\geq d_1^4d_2(d_2-1)(n-d_2)(n-3d_2).$
This implies \eqref{eq:s3}.

Now assume $X_{u_1v_1}=X_{u_2v_1}=1$. For the upper bound on  $t_{u_1u_2v_1}^{(3)}$,  we choose $u_3\in \overline{\N}(v_1), v_3\in \N(u_3), u_4\in \overline{\N}(v_3), v_2\in \N(u_4), u_5\in \overline{\N}(v_1)$ with $u_5\not=u_3$, $v_5\in \N(u_5), u_6\in \overline{\N}(v_5)$ and $v_4\in \N(u_6)$. This gives a total number of $d_1^4(n-d_2)^3(n-d_2-1)$ choices. Among those choices, a tuple is  not a valid Type 3 backward switching if and only if (i) $v_2\in \N(u_2)$ or (ii) $u_4\in \N(u_1).$  
Let 
$ t_{u_1u_2v_1}^{(3)}=d_1^4(n-d_2)^3(n-d_2-1)-L.$
For the first case, we have at most $d_1^4d_2(n-d_2)^2(n-d_2-1)$ choices by choosing $u_3\in \overline{\N}(v_1), v_2\in \N(u_2),v_3\in \N(u_3), u_4\in \N(v_2),u_5\in \overline{\N}(v_1),u_5\not=u_3, v_5\in \N(u_5),u_6\in \overline{\N}(v_5)$ and $v_4\in \N(u_6)$. By a similar argument, there are at most $d_1^4d_2(n-d_2)^2(n-d_2-1)$ choices for the second case. Hence $L\leq 2d_1^4d_2(n-d_2)^2(n-d_2-1)$ and 
$t_{u_1u_2v_1}^{(3)}\geq d_1^4(n-d_2)^2(n-d_2-1)(n-3d_2).$
Therefore \eqref{eq:t3} holds.
\end{proof}

Let $\mathcal G$ be the collection of the biadjacency matrices  of all $(n,m,d_1,d_2)$-bipartite biregular graphs and $\mathcal G_{u_1u_2v_1}$ be the subset of $\mathcal G$ such that $X_{u_1v_1}=X_{u_2v_1}=1$. Fix $u_1,u_2\in [n]$ with $u_1\not=u_2$ and $v_1\in [m]$. We construct an edge-weighted bipartite graph $\mathfrak G_0$ on two vertex class $\mathcal G$ and $\mathcal G_{u_1u_2v_1}$ as follows:
\begin{itemize}
    \item If $X\in \mathcal G$ with $X_{u_1v_1}=0, X_{u_2v_1}=1$, then form an edge of weight $d_1^2d_2(n-d_2)$ between $X$ and every element of $\mathcal G_{u_1u_2v_1}$ that is a result of a valid Type 1 forward switching from $X$.
    \item If $X\in \mathcal G$ with $X_{u_1v_1}=1, X_{u_2v_1}=0$, then form an edge of weight $d_1^2d_2(n-d_2)$ between $X$ and every element of $\mathcal G_{u_1u_2v_1}$ that is a result of a valid Type 2 forward switching from $X$.
     \item If $X\in \mathcal G$ with $X_{u_1v_1}=X_{u_2v_1}=0$, then form an edge of weight $1$ between $X$ and every element of $\mathcal G_{u_1u_2v_1}$ that is a result of a valid Type 3 forward switching from $X$.
      \item If $X\in \mathcal G$ with $X_{u_1v_1}=X_{u_2v_1}=1$, then form an edge of weight $d_1^4d_2(d_2-1)(n-d_2)^2$ between $X$ and its identical copy in $G_{u_1u_2v_1}$.
\end{itemize}
By our construction of $\mathfrak G_0$, the following lemma holds. 

\begin{lemma}\label{lem:G_0coupling}
In $\mathfrak G_0$, the following holds:
\begin{enumerate}
    \item Every element in $\mathcal G$ has degree between $d_1^4d_2(d_2-1)(n-d_2)(n-3d_2)$ and  $d_1^4d_2(d_2-1)(n-d_2)^2$.
    \item Every element in $\mathcal G_{u_1u_2v_1}$ has degree between $d_1^4(n-d_2)^2(n-2d_2)(n-1)$ and $d_1^4(n-d_2)^2n(n-1)$.
    \item  $\mathfrak G_0$ can be embedded into a weighted bipartite biregular graph $\mathfrak G$ on the same vertex sets, with vertices in $\mathcal G$ having degree $d_1^4d_2(d_2-1)(n-d_2)^2$ and vertices in $\mathcal G_{u_1u_2v_1}$ having degree $d_1^4(n-d_2)^2n(n-1)$. 
\end{enumerate}  
\end{lemma}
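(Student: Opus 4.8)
The plan is to follow the proof of Lemma~\ref{lem:G_0coupling1} line by line, now using the switching counts of Lemma~\ref{lem:switchingcounts1} in place of those of Lemma~\ref{lem:switchingcounts} and bookkeeping over the three new switching types. For part~(1), I would split according to the value of $(X_{u_1v_1},X_{u_2v_1})$. If exactly one of the two entries is $1$, the degree of $X$ in $\mathfrak G_0$ equals $d_1^2 d_2(n-d_2)$ times the number of valid Type~1 (resp.\ Type~2) forward switchings from $X$, which by \eqref{eq:suv1} and \eqref{eq:suv2} lies in $\bigl[d_1^4 d_2(d_2-1)(n-d_2)(n-2d_2),\ d_1^4 d_2(d_2-1)(n-d_2)^2\bigr]$. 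If both entries are $0$, the degree is $s^{(3)}_{u_1u_2v_1}(G)$, which by \eqref{eq:s3} lies in $\bigl[d_1^4 d_2(d_2-1)(n-d_2)(n-3d_2),\ d_1^4 d_2(d_2-1)(n-d_2)^2\bigr]$. If both entries are $1$, the degree is exactly the weight $d_1^4 d_2(d_2-1)(n-d_2)^2$ of the edge to the identical copy. Since $n-3d_2\le n-2d_2\le n-d_2$, part~(1) follows.

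For part~(2), fix $X\in\mathcal G_{u_1u_2v_1}$ with underlying graph $G$. In $\mathfrak G_0$ the vertex $X$ is joined to its identical copy in $\mathcal G$ by an edge of weight $d_1^4 d_2(d_2-1)(n-d_2)^2$, to each $\mathcal G$-preimage under a Type~1 or Type~2 forward switching by an edge of weight $d_1^2 d_2(n-d_2)$, and to each $\mathcal G$-preimage under a Type~3 forward switching by an edge of weight $1$; the numbers of such preimages are $t^{(1)}_{u_1u_2v_1}(G)$, $t^{(2)}_{u_1u_2v_1}(G)$ and $t^{(3)}_{u_1u_2v_1}(G)$. Hence the degree of $X$ is
\[
 d_1^4 d_2(d_2-1)(n-d_2)^2 + d_1^2 d_2(n-d_2)\bigl(t^{(1)}_{u_1u_2v_1}(G)+t^{(2)}_{u_1u_2v_1}(G)\bigr)+t^{(3)}_{u_1u_2v_1}(G).
\]
Substituting the extreme values from \eqref{eq:t1}, \eqref{eq:t2}, \eqref{eq:t3} and factoring out $d_1^4(n-d_2)^2$, the upper bound reduces to the identity $d_2(d_2-1)+2d_2(n-d_2)+(n-d_2)(n-d_2-1)=n(n-1)$ and the lower bound to $d_2(d_2-1)+2d_2(n-2d_2)+(n-d_2-1)(n-3d_2)=(n-1)(n-2d_2)$, both of which are routine expansions. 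I expect this to be the only genuinely delicate point: the weights in the construction of $\mathfrak G_0$ were chosen precisely so that these polynomials collapse to $n(n-1)$ and $(n-1)(n-2d_2)$, and an error there would break part~(3).

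For part~(3), I would reproduce the embedding argument from Lemma~\ref{lem:G_0coupling1}. By (1) and (2) the degrees in $\mathfrak G_0$ are at most $d_1^4 d_2(d_2-1)(n-d_2)^2$ on $\mathcal G$ and at most $d_1^4(n-d_2)^2 n(n-1)$ on $\mathcal G_{u_1u_2v_1}$, so one can greedily add edges, or increase edge weights, from deficient vertices of $\mathcal G$ to deficient vertices of $\mathcal G_{u_1u_2v_1}$ until one side is saturated. From the distribution of uniform random biregular bipartite graphs, $\mathbb P(X_{u_1v_1}=X_{u_2v_1}=1)=\frac{d_2(d_2-1)}{n(n-1)}$ by exchangeability of the vertices of $V_1$, so $|\mathcal G_{u_1u_2v_1}|/|\mathcal G|=\frac{d_2(d_2-1)}{n(n-1)}$ and hence
\[
 |\mathcal G|\,d_1^4 d_2(d_2-1)(n-d_2)^2=|\mathcal G_{u_1u_2v_1}|\,d_1^4(n-d_2)^2 n(n-1).
\]
The two sides thus carry the same total target degree, which forces the greedy procedure to saturate $\mathcal G$ and $\mathcal G_{u_1u_2v_1}$ at the same time, so the resulting $\mathfrak G$ is a weighted bipartite biregular graph with the claimed degrees. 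Beyond the two telescoping identities in (2), the argument is just careful accounting over the four degree cases and the three backward-switching types.
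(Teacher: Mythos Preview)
Your proposal is correct and follows essentially the same approach as the paper: the same case split on $(X_{u_1v_1},X_{u_2v_1})$ for part~(1), the same weighted sum over backward switchings plus the identical-copy edge for part~(2), and the same greedy embedding argument together with the ratio $|\mathcal G_{u_1u_2v_1}|/|\mathcal G|=d_2(d_2-1)/(n(n-1))$ for part~(3). In fact you spell out the lower-bound identity in~(2) more explicitly than the paper, which only states the result and says ``similarly.''
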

\begin{proof}
From Lemma \ref{lem:switchingcounts1}, with the weight we assign to each edge, we have the following:
\begin{itemize}
    \item If $X_{u_1,v_1}=0, X_{u_2,v_2}=1$ or  $X_{u_1,v_1}=1, X_{u_2,v_2}=0$, then the degree of $X$ in $\mathfrak G_0$ is between $d_1^4d_2(d_2-1)(n-d_2)(n-2d_2)$ and $d_1^4d_2(d_2-1)(n-d_2)^2$.
    \item If $X_{u_1,v_1}=X_{u_2,v_2}=0$, from \eqref{eq:s3}, the degree of $X$ in $\mathfrak G_0$ is between $d_1^4d_2(d_2-1)(n-d_2)(n-3d_2)$ and $d_1^4d_2(d_2-1)(n-d_2)^2$.
    \item If $X_{u_1,v_1}=X_{u_2,v_2}=1$, then the degree of $X$ in $\mathfrak G_0$ is exactly $d_1^4d_2(d_2-1)(n-d_2)^2$.
\end{itemize}
Therefore the first claim of Lemma \ref{lem:G_0coupling} holds.
 Now we turn to elements in $G_{u_1u_2v_1}$. For any $X\in \mathcal G_{u_1u_2v_1}$, $X$ can be adjacent to elements $X'$ such that
\begin{itemize}
    \item  $X'$ is a result of a valid Type 1 or Type 2 backward switching from $X$, with edge weight $d_1^2d_2(n-d_2)$.
    \item $X'$ is a result of a  valid Type 3 backward switching from $X$, with edge weight $1$.
    \item $X'$ is an identical copy of $X$ in $\mathcal G$, with edge weight $d_1^4d_2(d_2-1)(n-d_2)^2$.
\end{itemize}
Then for each $X\in \mathcal G_{u_1u_2v_1}$, combining all the weighted edges from valid Type 1,2,3 backward switchings and the weighted edge from its identical copy, from \eqref{eq:t1}, \eqref{eq:t2} and \eqref{eq:t3}, the degree is at most
\begin{align*}
     &2d_1^2d_2(n-d_2)\cdot d_1^2(n-d_2)^2+d_1^4(n-d_2)^3(n-d_2-1)+d_1^4d_2(d_2-1)(n-d_2)^2\\
    =& d_1^4(n-d_2)^2n(n-1).
\end{align*}
And similarly, its degree is at least $d_1^4(n-d_2)^2(n-2d_2)(n-1)$.

Then the second claim in Lemma \ref{lem:G_0coupling} holds.  It remains to prove the third claim. Since $X$ is uniformly distributed, we have
\begin{align*}
    \mathbb P(X_{u_1v_1}=X_{u_2v_1}=1)=\frac{nd_1(d_2-1)}{n(n-1)m}=\frac{d_2(d_2-1)}{n(n-1)}
\end{align*}
and
\begin{align}\label{eq:ratio}
    \frac{|\mathcal G_{u_1u_2v_1}|}{|\mathcal G|}=\frac{d_2(d_2-1)}{n(n-1)}.
\end{align}
To construct $\mathfrak G$, we start with $\mathfrak G_0$ and add edges as follows. Go through the vertices in $\mathcal G$ and for each vertex with degree less than $d_1^4d_2(d_2-1)(n-d_2)^2$, arbitrarily make edges  from the vertex to vertices in $\mathcal G_{u_1,u_2,v_1}$ with degree less than $d_1^4(n-d_2)^2n(n-1)$. Continue this procedure until either all vertices in $\mathcal G$ have degree $d_1^4d_2(d_2-1)(n-d_2)^2$ or all vertices in $\mathcal G_{u_1,u_2,v_1}$ have degree $d_1^4(n-d_2)^2n(n-1)$. Similar to the proof of Lemma \ref{lem:G_0coupling1},  $\mathfrak G$ is bipartite biregular. 
Therefore we have embedded $\mathfrak G_0$ into  $\mathfrak G$.\end{proof}

Now we are able to construct a coupling with the desired property.
In the graph $\mathfrak G$,
Choose a uniform element $X$ in $\mathcal G$ and consider $X^{(u_1u_2v_1)}$ to be the element in $\mathcal G_{u_1u_2v_1}$ given by walking from $X$ along an edge with probability proportional to its weight. Then $X^{(u_1u_2v_1)}$ is uniformly distributed in the vertex set $\mathcal G_{u_1u_2v_1}$.
Lemma \ref{lem:G_0coupling} yields a coupling of $(X, X^{(u_1u_2v_1)})$ that satisfies
\begin{align}
    &\mathbb P(X,X^{(u_1u_2v_1)} \text{ are identical or differ by a switching} \mid X^{(u_1u_2v_1)})\geq 1- \frac{2d_2}{n},\label{eq:lowerboundp111}\\
    & \mathbb P(X,X^{(u_1u_2v_1)} \text{ are identical or differ by a switching} \mid X)\geq 1- \frac{2d_2}{n-d_2}.\label{eq:lowerboundpp111}
\end{align}

\section{Concentration for linear functions of $XX^{\top}-d_1I$}\label{sec:highconcentrate}

 For a given $n\times n$ symmetric matrix $Q$, we define the following linear function for $X$:
\begin{align}
    g_{Q}(X)&:
    =\sum_{u_1,u_2\in [n]}Q_{u_1u_2}(XX^{\top}-d_1I)_{u_1u_2}=\sum_{\substack{u_1,u_2\in [n],u_1\not=u_2}} \sum_{v_1\in [m]}Q_{u_1u_2}  (X_{u_1v_1}X_{u_2v_1}).\label{eq:fQm}
\end{align}

In this section, we  analyze $g_Q(X)$ by the size biased coupling and the switching operations we introduced in Section \ref{sec:highswitching}.
 To quantify the change of $g_Q(X)$ after a valid forward switching, we first introduce the notion of \textit{codegrees}.
   
 \begin{definition}[codgeree]\label{def:codeg} Let $G$ be a bipartite graph with vertex sets $V_1=[n],V_2=[m]$ and  biadjacency matrix  $X$.
 Define the \textit{codegree} of two vertices $i,j\in [n]$ in $G$ as 
 \begin{align*}
     \textnormal{codeg}(X,i,j)=| \{v\in [m]: (i,v)\in E(G) \text{ and } (j,v)\in E(G)  \}|.
 \end{align*}
 Equivalently, we have
 $
     \textnormal{codeg}(X,i,j)=\sum_{v\in [m]}X_{iv}X_{jv}.
 $
 \end{definition}
 
 From Definition \ref{def:codeg}, Equation \eqref{eq:fQm} can also be written as 
 \begin{align}\label{eq:gcodeg}
     g_Q(X)=\sum_{\substack{u_1,u_2\in [n],u_1\not=u_2}} Q_{u_1u_2}\cdot \codeg(X,u_1,u_2).
 \end{align}
 For any $u_1,u_2\in [n],u_1\not=u_2, v_1\in [m]$, since $X$ is uniformly distributed, we have
$\mathbb EX_{u_1v_1}X_{u_2v_1}=\frac{d_1(d_2-1)}{n-1}.$
Denote 
\begin{align}\label{eq:defparameter}
    \mu:&=\mathbb Eg_Q(X)=\frac{d_1(d_2-1)}{n-1} \sum_{u_1\not=u_2}Q_{u_1u_2}, \quad  \tilde{\sigma}^2:=\frac{d_1(d_2-1)}{n-1}\sum_{u_1\not=u_2}Q_{u_1u_2}^2.
\end{align}

\begin{theorem} \label{thm:fQX}
 Let $X$ be the biadjacency matrix of a uniform random $(n,m,d_1,d_2)$-bipartite biregular graph and $M=XX^{\top}-d_1I$. Let $Q$ be an $n\times n$ symmetric matrix with all entries in $[0,a]$ and $Q_{ii}=0$ for $i\in [n]$. Let  $\tilde{\sigma}^2,\mu$ be the parameters defined in \eqref{eq:defparameter}. Denote  $p=1-\frac{2d_2}{n}$, $p'=1-\frac{2d_2}{n-d_2}$.
 Then for all $t\geq 0$,
 \begin{align}\label{eq:Xupper}
     &\mathbb P\left( g_Q(X)-\frac{\mu}{p}\geq t \right)\leq \exp\left(-\frac{\tilde{\sigma}^2}{6d_2pa^2}h\left( \frac{pat}{2\tilde{\sigma}^2}\right)\right),
     \end{align}
 and
     \begin{align}\label{eq:Xlower}
     &\mathbb P\left( g_Q(X)-p'\mu \leq - t \right)\leq \exp\left(-\frac{\tilde{\sigma}^2}{6d_2a^2}h\left( \frac{at}{2\tilde{\sigma}^2}\right)\right).
 \end{align}
\end{theorem}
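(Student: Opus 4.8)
The plan is to mimic the proof of Theorem~\ref{thm:fQX1} one level higher, replacing the single switching of Section~\ref{sec:switching} by the Type~$1$, $2$ and $3$ switchings of Section~\ref{sec:highswitching}. From \eqref{eq:fQm}, $g_Q(X)=\sum_{u_1\neq u_2}\sum_{v_1\in[m]}Q_{u_1u_2}\,X_{u_1v_1}X_{u_2v_1}$ is a linear combination, with nonnegative coefficients $Q_{u_1u_2}$, of the $\{0,1\}$-valued variables $X_{u_1v_1}X_{u_2v_1}$ indexed by triples $(u_1,u_2,v_1)$ with $u_1\neq u_2$. Since $\mathbb E\!\left[X_{u_1v_1}X_{u_2v_1}\right]$ is the same for every such triple, Lemma~\ref{lem:biasindicator} applies: independently of $X$ I would pick a random triple $(U_1,U_2,V_1)$ with $\mathbb P(U_1=u_1,U_2=u_2,V_1=v_1)=Q_{u_1u_2}/\bigl(m\sum_{u\neq u'}Q_{uu'}\bigr)$, then set $X':=X^{(U_1U_2V_1)}$ using the random walk on the weighted biregular bipartite graph $\mathfrak G$ from Lemma~\ref{lem:G_0coupling}; the pair $(g_Q(X),g_Q(X'))$ is then a size biased coupling for $g_Q(X)$.

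I would then apply Lemma~\ref{thm:tailestimate1}. Let $\mathcal B$ be the event that the step from $X$ to $X'$ in $\mathfrak G$ traverses an edge of $\mathfrak G_0$, i.e.\ that $X$ and $X'$ are identical or differ by one valid switching, whose type (1, 2 or 3) is determined by the pattern of $(X_{U_1V_1},X_{U_2V_1})$. By \eqref{eq:lowerboundp111}--\eqref{eq:lowerboundpp111}, $\mathbb P(\mathcal B\mid X')\geq p$ and $\mathbb P(\mathcal B\mid X)\geq p'$. For the one-step increment on $\mathcal B$, observe from \eqref{eq:gcodeg} that $g_Q$ changes only through codegrees, and that a valid switching modifies the neighbourhood of only a bounded number of left-vertices, each exchanging one neighbour $v$ for a new neighbour $v'$. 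For such a vertex $u$, $\codeg(X,u,w)$ shifts by $X_{wv'}-X_{wv}\in\{-1,0,1\}$, which is nonzero for at most $2d_2$ vertices $w$, while codegrees between two affected vertices move by at most $2$; weighting each change by $Q_{uw}+Q_{wu}\leq 2a$ (and using $Q_{ii}=0$) and summing over the $O(1)$ affected vertices gives $|g_Q(X')-g_Q(X)|\leq c$ on $\mathcal B$ for some $c=O(d_2a)$.

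The heart of the argument is the bound $\mathbb E[D\mathbf 1_{\mathcal B}\mid X]\leq \tau^2/\mu$ with $D=(g_Q(X')-g_Q(X))_+$. Conditioning on $X,U_1,U_2,V_1$, and using that each vertex of $\mathcal G$ has degree $d_1^4d_2(d_2-1)(n-d_2)^2$ in $\mathfrak G$ (Lemma~\ref{lem:G_0coupling}), the law of $X'$ on $\mathcal B$ assigns an explicit weight-proportional probability to each outcome of a valid forward switching of the appropriate type. I would bound $D$ by the positive part of the codegree increment, a sum of terms $Q_{u_1u_2}$ and $\sum_{w\in\N(v')}Q_{uw}$, then promote the inner indices $w$ to extra free coordinates of the switching tuple and enlarge the set of valid tuples to the product-structured superset obtained by dropping the $\overline{\N}$-constraints --- exactly as $\mathcal S\subset\overline{\mathcal S}$ in the proof of Theorem~\ref{thm:fQX1}. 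Summing over this enlarged set, averaging over $(U_1,U_2,V_1)$ with the weights above, and applying Cauchy--Schwarz to pair the index weight $Q_{u_1u_2}$ with each remaining $Q$-factor reduces every resulting sum to a constant multiple of $\tilde\sigma^2=\frac{d_1(d_2-1)}{n-1}\sum_{u\neq u'}Q_{uu'}^2$; the $\Theta(d_2)$ sensitivity of a single codegree shows up as an extra factor $d_2$, so that $\mathbb E[D\mathbf 1_{\mathcal B}\mid X]\leq Cd_2\tilde\sigma^2/\mu$. Feeding $c=O(d_2a)$ and $\tau^2=O(d_2\tilde\sigma^2)$ into parts (1) and (2) of Lemma~\ref{thm:tailestimate1} and adjusting the implicit constants gives \eqref{eq:Xupper} and \eqref{eq:Xlower}.

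I expect the main obstacle to be this last step. The codegree bookkeeping has to be carried out uniformly over all three switching types, including the six-vertex Type~$3$ switchings; and one must check that after introducing the extra $w$-coordinates and enlarging to product sets, Cauchy--Schwarz genuinely collapses each cross term to $\tilde\sigma^2$ rather than to a larger row-sum quantity such as $\sum_u\bigl(\sum_{u'}Q_{uu'}\bigr)^2$, and that the $d_2$-powers and numerical constants match the form stated. Everything else --- the construction of the coupling, the verification that $(g_Q(X),g_Q(X'))$ is size biased, and the bounds $\mathbb P(\mathcal B\mid X)\geq p'$, $\mathbb P(\mathcal B\mid X')\geq p$ --- is already in place from Sections~\ref{sec:Sizecoupling} and~\ref{sec:highswitching}.
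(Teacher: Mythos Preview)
Your proposal is correct and follows essentially the same route as the paper: you construct the size biased coupling for $g_Q(X)$ via Lemma~\ref{lem:biasindicator} and the random walk on $\mathfrak G$ from Lemma~\ref{lem:G_0coupling}, take $\mathcal B$ to be the event that the step lies in $\mathfrak G_0$ (with the probability bounds \eqref{eq:lowerboundp111}--\eqref{eq:lowerboundpp111}), bound the increment on $\mathcal B$ by $O(d_2a)$ via codegree bookkeeping, and then control $\mathbb E[D\mathbf 1_{\mathcal B}\mid X]$ by enlarging each $\mathcal S_i$ to a product set $\overline{\mathcal S_i}$ and applying Cauchy--Schwarz term by term to reach $O(d_2\tilde\sigma^2/\mu)$. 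The paper carries out exactly this program, with the explicit constants $c=12d_2a$ and $\tau^2=24d_2\tilde\sigma^2$; your worry about cross terms collapsing to $\tilde\sigma^2$ rather than a row-sum quantity is the right thing to flag, and the paper handles it by counting, for each fixed pair of indices appearing in the $Q$-factor, how many completions lie in $\overline{\mathcal S_i}$.
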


\begin{proof}
 We now construct a size biased coupling for $g_Q(X)$ based the analysis of switchings in Section \ref{sec:highswitching}. Choose $X\in \mathcal G$ uniformly and walk through an edge with probability proportional to its weight. We then obtain a uniform random element $X^{(u_1u_2v_1)}$ in $\mathcal G_{u_1u_2v_1}$. The matrix $X^{(u_1u_2v_1)}$ is distributed as $X$ conditioned on $X_{u_1v_1}=X_{u_2v_1}=1.$ Independently of $X$, we choose $(U_1,U_2)=(u_1,u_2)$ with probability  
 \begin{align} \label{eq:PUV11}
 \mathbb P(U_1=u_1, U_2=u_2)=\frac{Q_{u_1u_2}}{\sum_{u\not=v}Q_{uv}}
 \end{align}for all $u_1\not=u_2$, and independently of everything  choose $V_1\in [m]$ uniformly. Set $X'=X^{(U_1,U_2,V_1)}$. By Lemma \ref{lem:biasindicator}, and \eqref{eq:fQm}, the pair $(g_{Q}(X),g_{Q}(X'))$ is a size biased coupling. 
 Let $\mathcal S_i(u_1,u_2,v_1)$ be the set of all tuples $(u_2,u_3,u_4,v_2,v_3)$ such that $(u_1,u_2,u_3,u_4,v_1,v_2,v_3)$ is a valid Type $i$ forward switching for $i=1,2$ and let $\mathcal S_3(u_1,u_2,v_1)$ be the set of all tuples $(u_2,\dots,u_6,v_2,\dots,v_5)$ such that $(u_1,\dots, u_6,v_1,\dots, v_5)$ is a valid Type $3$ forward switching.
Let $X^{(i)}(u_1,\dots,u_4,v_1,v_2,v_3)$ be the matrix obtained from  $X$ by a valid forward Type $i$ switching $(u_1,\dots,u_4,v_1,v_2,v_3)$ for $i=1,2$. Similarly, let $X^{(3)}(u_1,\dots,u_6,v_1,\dots,v_5)$ be the matrix obtained from  $X$ by a valid forward Type $3$ switching $(u_1,\dots,u_6,v_1,\dots,v_5)$.
From Lemma \ref{lem:G_0coupling}, the coupling of $(X,X')$ can be described as follows. 
Assuming the product $X_{U_1V_1}X_{U_2V_1}=0$ and conditioned on $X, U_1,U_2,V_1$, there is exactly one non-empty set among $\{\mathcal S_i(U_1,U_2,V_1)\}_{1\leq i\leq 3}$. The matrix $X'$ takes the value $X^{(i)}(u_1,u_2,u_3,u_4,v_1,v_2,v_3)$ with probability \[\frac{d_1^2d_2(n-d_2)}{d_1^4d_2(d_2-1)(n-d_2)^2}=\frac{1}{d_1^2(d_2-1)(n-d_2)}\] 
   for each $(u_1,\dots,u_4,v_1,v_2,v_3)\in \mathcal S_i(U_1,U_2,V_1)$ and $i=1,2$. And it takes the value $X^{(3)}(u_1,\dots,u_6,v_1,\dots,v_5)$ with probability $ \frac{1}{d_1^3d_2(d_2-1)(n-d_2)^2}$ for each $(u_1,\dots, u_6,v_1,\dots,v_5)\in \mathcal S_3(u_1,u_2,v_1)$. 
      For any valid Type 1 forward switching $(u_1,\dots,u_4,v_1,v_2,v_3)$, denote $\tilde{X}:=X^{(1)}(u_1,\dots,u_4,v_1,v_2,v_3)$. From \eqref{eq:gcodeg}, we have  
 \begin{align}  &g_Q(X^{(1)}(u_1,\dots,u_4,v_1,v_2,v_3))-g_Q(X)
 =\sum_{i\not=j, i,j\in [n]} Q_{ij} \left(\textnormal{codeg}(\tilde{X},i,j)-\textnormal{codeg}(X,i,j) \right) \notag\\
 \leq & \sum_{i\not=j, i,j\in [n]} Q_{ij} \left(\textnormal{codeg}(\tilde{X},i,j)-\textnormal{codeg}(X,i,j) \right)_+.\label{eq:gqplus}
 \end{align}
Then to have an upper bound on $g_Q(\tilde{X})-g_Q(X)$, it suffices to count the number of pairs $(i,j)$ with increased codegrees after applying the switching.
  From Figure \ref{fig:my_label}, the only new edges created in the valid Type 1 forward switching $(u_1,u_2,u_3,u_4,v_1,v_2,v_3)$ are $u_2v_1, u_3v_3$ and $u_4v_2$. When the edge $u_2v_1$ is added, for any $u\in \N(v_1),$ the codegree of $u$ and $u_2$ is increased by $1$. Similarly, when the edge $u_3v_3$ is added, for any $u\in \N(v_2)$, the codegree of $u$ and $u_4$ is increased by 1.  When the edge $u_4v_2$ is added, for any $u\in \N(v_3)$, the codegree of $u$ and $u_3$ is increased by 1. Therefore
\eqref{eq:gqplus} is bounded by 
 \begin{align}
&2\sum_{\substack{u\in \N(v_1)}} Q_{uu_2}+2\sum_{\substack{u\in \N(v_2)}} Q_{uu_4} +2\sum_{\substack{u\in \N(v_3)}} Q_{uu_3}\leq 6d_2a,\label{eq:g1}
 \end{align}
 where the factor $2$ comes from the symmetry of indices $i,j$ in \eqref{eq:gqplus}.  Using the same argument, for any valid Type 2 forward switching $(u_1,\dots,u_4,v_1,v_2,v_3)$,
  \begin{align}
g_Q(X^{(2)}(u_1,\dots,u_4,v_1,v_2,v_3))-g_Q(X) 
\leq  2\sum_{\substack{u\in \N(v_1) }} Q_{uu_1}+2\sum_{\substack{u\in \N(v_2) }} Q_{uu_4} +2\sum_{\substack{u\in \N(v_3) }} Q_{uu_3}\leq 6d_2a.\label{eq:g2}
 \end{align}
As can be seen from Figure \ref{fig:my_label12}, there are $6$ new edges created in a valid Type 3 forward switching. We also have for any valid Type 3 forward switching $(u_1,\dots,u_6,v_1,\dots,v_5)$,
  \begin{align}
 &g_Q(X^{(3)}(u_1,\dots,u_6,v_1,\dots,v_5))-g_Q(X) \notag\\
\leq &2\left(\sum_{\substack{u\in \N(v_1)}} Q_{uu_1}+\sum_{\substack{u\in \N(v_1)}} Q_{uu_2}+\sum_{\substack{u\in \N(v_2)}} Q_{uu_4} +\sum_{\substack{u\in \N(v_3)}} Q_{uu_3}+\sum_{\substack{u\in \N(v_4)}} Q_{uu_6}   +\sum_{\substack{u\in \N(v_5)}} Q_{uu_5}\right) \notag\\
\leq &  12d_2a.  \label{eq:g3}
 \end{align}

 If $X_{u_1v_1}=1, X_{u_2v_1}=0$, let $\overline{ \mathcal S_1}(u_1,u_2,v_1)$ be the set of tuples $(u_3,u_4,v_2,v_3)$  such that  $v_2\in \N(u_2)$, $u_3\in \N(v_1)$ with $u_3\not=u_1$, $u_4\in \overline{\N}(v_2)$, and  $v_3\in \N(u_4)$.  From the proof of Lemma \ref{lem:switchingcounts1}, 
there are $d_1^2(d_2-1)(n-d_2)$ many choices in total, hence 
$|\overline{ \mathcal  S_1}(u_1,u_2,v_1)|=d_1^2(d_2-1)(n-d_2)$ and we have $ \mathcal S_1(u_1,u_2,v_1) \subset \overline{\mathcal S_1}(u_1,u_2,v_1)$.  If the condition $X_{u_1v_1}=1, X_{u_2v_1}=0$ does not hold, set $\overline{ \mathcal S_1}(u_1,u_2,v_1)=\emptyset$.

If $X_{u_1v_1}=0, X_{u_2v_1}=1$, define $\overline{ \mathcal S_2}(u_1,u_2,v_1)$ to be the set of tuples $(u_3,u_4,v_2,v_3)$  such that  $v_2\in \N(u_1)$, $u_3\in \N(v_1)$ with $u_3\not=u_1$, $u_4\in \overline{\N}(v_2)$, and  $v_3\in \N(u_4)$.  We have  $|\overline{ \mathcal  S_2}(u_1,u_2,v_1)|=d_1^2(d_2-1)(n-d_2) $  
 and  $\mathcal S_2(u_1,u_2,v_1) \subset \overline{ \mathcal S_2}(u_1,u_2,v_1).$
If the condition  does not hold, set $\overline{ \mathcal S_2}(u_1,u_2,v_1)=\emptyset$.

If $X_{u_1v_1}=X_{u_2v_1}=0$, let $\overline{ \mathcal S_3}(u_1,u_2,v_1)$ be the set of tuples $(u_3,u_4,u_5,u_6,v_2,v_3,v_4)$  such that  $v_2\in \N(u_2)$, $u_3\in \N(v_1)$ with $u_3\not=u_1$, $u_4\in \overline{\N}(v_2)$,   $v_3\in \N(u_4)$, $u_6\in \overline{\N}(v_4), v_5\in \N(u_6)$.  It satisfies  $|\overline{\mathcal S_3}(u_1,u_2,v_1)|=d_1^4d_2(d_2-1)(n-d_2)^2 $
 and $\mathcal S_3(u_1,u_2,v_1) \subset \overline{\mathcal S_3}(u_1,u_2,v_1).$
If the condition does not hold, set $\overline{ \mathcal S_3}(u_1,u_2,v_1)=\emptyset.$

Denote $D:=(g_Q(X')-g_Q(X))_+$. We can write $\mathbb E[D\mathbf{1}_{\mathcal B}\mid X, U_1 ,U_2, V_1]$ as
 \begin{align}\label{eq:EDb}
 &\frac{1}{d_1^2(d_2-1)(n-d_2)}\sum_{(u_3,u_4,v_2,v_3)\in \mathcal S_1(u_1,u_2,v_1)}(g_Q(X^{(1)}(U_1, U_2,\dots,V_1,v_2,v_3))-g_Q(X))_+   \notag\\ 
     &+\frac{1}{d_1^2(d_2-1)(n-d_2)}\sum_{(u_3,u_4,v_2,v_3)\in \mathcal S_2(u_1,u_2,v_1)}(g_Q(X^{(2)}(U_1, U_2,\dots,V_1,v_2,v_3))-g_Q(X))_+  \notag\\
     &+\frac{1}{d_1^4d_2(d_2-1)(n-d_2)^2}\sum_{(u_3,\dots,u_6,v_2,v_3,v_4)\in \mathcal S_3(u_1,u_2,v_1)}(g_Q(X^{(3)}(U_1,U_2,\dots,V_1,\dots,v_5))-g_Q(X))_+ .  
\end{align}
Inequalities \eqref{eq:g1}, \eqref{eq:g2} and  \eqref{eq:g3} imply that \eqref{eq:EDb} is bounded by
\begin{align*}
      &\frac{2}{d_1^2(d_2-1)(n-d_2)}\sum_{(u_3,u_4,v_2,v_3)\in  \overline{\mathcal S_1}(U_1,U_2,V_1)}\left(\sum_{\substack{u\in \N(V_1)}} Q_{uU_2}+\sum_{\substack{u\in \N(v_2)}} Q_{uu_4} +\sum_{\substack{u\in \N(v_3)}} Q_{uu_3} \right)  \notag\\ 
     &+\frac{2}{d_1^2(d_2-1)(n-d_2)}\sum_{(u_3,u_4,v_2,v_3)\in  \overline{\mathcal S_2}(U_1,U_2,V_1)}\left(\sum_{\substack{u\in \N(V_1)}} Q_{uU_1}+
    \sum_{\substack{u\in \N(v_2)}} Q_{uu_4} +\sum_{\substack{u\in \N(v_3)}} Q_{uu_3}\right)\\
     &+\frac{2}{d_1^4d_2(d_2-1)(n-d_2)^2}
     \sum_{(u_3,\dots, u_6,v_2,v_3,v_4)\in  \overline{\mathcal S_3}(U_1,U_2,V_1)}\\
     &\left(\sum_{\substack{u\in \N(V_1)}} Q_{uU_1}+\sum_{\substack{u\in \N(V_1)}} Q_{uU_2}+\sum_{\substack{u\in \N(v_2)}} Q_{uu_4} +\sum_{\substack{u\in \N(v_3)}} Q_{uu_3}+\sum_{\substack{u\in \N(v_4)}} Q_{uu_6}   +\sum_{\substack{u\in \N(v_5)}} Q_{uu_5}\right).
 \end{align*}
 Recall the distribution of $U_1,U_2,V_1$ in \eqref{eq:PUV11},  and $\mu=\frac{d_1(d_2-1)}{n-1} \sum_{u_1\not=u_2}Q_{u_1u_2}.$ Taking expectation over $U_1,U_2,V_1$,  $\mathbb E[D\mathbf{1}_{\mathcal B}\mid X]$ is then upper bounded by the sum of the following three terms:
\begin{align}
\frac{2d_2}{\mu d_1^2n(n-1)(n-d_2)}\sum_{\substack{u_1\in [n]\\ v_1\in \N(u_1)\\ u_2\in \overline{\N}(v_1)}}\sum_{(u_3,u_4,v_2,v_3)\in  \overline{\mathcal S_1}(u_1,u_2,v_1)}Q_{u_1u_2}\left(\sum_{\substack{u\in \N(v_1)}} Q_{uu_2}+\sum_{\substack{u\in \N(v_2)}} Q_{uu_4} +\sum_{\substack{u\in \N(v_3)}} Q_{uu_3} \right), \label{eq:77}
    \end{align}

\begin{align}
 \frac{2d_2}{\mu d_1^2n(n-1)(n-d_2)}\sum_{\substack{u_1\in [n]\\ v_1\in \overline{\N}(u_1)\\ u_2\in \N(v_1)}}\sum_{(u_3,u_4,v_2,v_3)\in \overline {\mathcal S_2}(u_1,u_2,v_1)}Q_{u_1u_2}\left(\sum_{\substack{u\in \N(v_1)}} Q_{uu_1}+
    \sum_{\substack{u\in \N(v_2)}} Q_{uu_4} +\sum_{\substack{u\in \N(v_3)}} Q_{uu_3}\right),\label{eq:78}
    \end{align}
    and
\begin{align}
    & \frac{2}{\mu d_1^4 n(n-1)(n-d_2)^2}\sum_{\substack{u_1\in [n], v_1\in \overline{\N}(u_1), u_2\in \overline{\N}(v_1)}}
     \sum_{(u_3,\dots,v_4)\in \overline {\mathcal S_3}(u_1,u_2,v_1)} \label{eq:79} \\
     &Q_{u_1u_2}\left(\sum_{\substack{u\in \N(v_1)}} Q_{uu_1}+\sum_{\substack{u\in \N(v_1)}} Q_{uu_2}+\sum_{\substack{u\in \N(v_2)}} Q_{uu_4} +\sum_{\substack{u\in \N(v_3)}} Q_{uu_3}+\sum_{\substack{u\in \N(v_4)}} Q_{uu_6}   +\sum_{\substack{u\in \N(v_5)}} Q_{uu_5}\right) \notag\\
     &:=S_{31}+S_{32}+S_{33}+S_{34}+S_{35}+ S_{36}. \notag
\end{align}
In the following proof, we estimate the three terms \eqref{eq:77}, \eqref{eq:78} and \eqref{eq:79} separately.  Write the sum in \eqref{eq:77} as
\begin{align}\label{eq:Q1Q2Q3}
    &\sum_{\substack{u_1\in [n]\\ v_1\in \N(u_1), u_2\in \overline{\N}(v_1)}}\sum_{(u_3,u_4,v_2,v_3)\in  \overline{\mathcal S}_1(u_1,u_2,v_1)}\left(\sum_{\substack{u\in \N(v_1)}} Q_{u_1u_2}Q_{uu_2}+\sum_{\substack{u\in \N(v_2)}} Q_{u_1u_2}Q_{uu_4} +\sum_{\substack{u\in \N(v_3)}} Q_{u_1u_2}Q_{uu_3} \right)\\
    &:=S_{11}+S_{12}+S_{13}. \notag
\end{align}
By Cauchy's inequality, $S_{11}$ is bounded by
\begin{align}
    \left(\sum_{\substack{u_1\in [n]\\ v_1\in \N(u_1)\\ u_2\in \overline{\N}(v_1)}}\sum_{(u_3,u_4,v_2,v_3)\in  \overline{\mathcal S}_1(u_1,u_2,v_1)}\sum_{\substack{u\in \N(v_1) }} Q_{u_1u_2}^2\right)^{1/2}\left(\sum_{\substack{u_1\in [n]\\ v_1\in \N(u_1)\\ u_2\in \overline{\N}(v_1)}}\sum_{(u_3,u_4,v_2,v_3)\in  \overline{\mathcal S}_1(u_1,u_2,v_1)}\sum_{\substack{u\in \N(v_1)}} Q_{uu_2}^2\right)^{1/2}. \label{eq:QQQQQQ}
\end{align}
Since
$|\overline{ \mathcal  S_1}(u_1,u_2,v_1)|=d_1^2(d_2-1)(n-d_2)$ and $\tilde{\sigma}^2=\frac{d_1(d_2-1)}{n-1}\sum_{u_1\not=u_2}Q_{u_1u_2}^2$, the first factor in the product of \eqref{eq:QQQQQQ} satisfies
\begin{align}
    &\sum_{\substack{u_1\in [n]\\ v_1\in \N(u_1), u_2\in \overline{\N}(v_1)}}\sum_{(u_3,u_4,v_2,v_3)\in  \overline{\mathcal S}_1(u_1,u_2,v_1)}\sum_{\substack{u\in \N(v_1)}} Q_{u_1u_2}^2 
    =  d_1^2d_2(d_2-1)(n-d_2)\sum_{\substack{u_1\in [n]\\ v_1\in \N(u_1), u_2\in \overline{\N}(v_1)}}  Q_{u_1u_2}^2.  \label{eq:S1X}
    \end{align}
  Since
    \[\sum_{\substack{u_1\in [n], v_1\in \N(u_1), u_2\in \overline{\N}(v_1)}}  Q_{u_1u_2}^2\leq \sum_{u_1,u_2\in [n],v_1\in \N(u_1)}Q_{u_1u_2}^2=d_1\sum_{u_1\not=u_2}Q_{u_1u_2}^2, \]
\eqref{eq:S1X} is then bounded by    
\begin{align}\label{eq:SIXX}
d_1^3d_2(d_2-1)(n-d_2) \sum_{u_1\not=u_2}Q_{u_1u_2}^2
    =d_1^2d_2(n-1)(n-d_2)\tilde{\sigma}^2.
    \end{align}
We also have
\begin{align*}
 \sum_{\substack{u_1\in [n], v_1\in \N(u_1), u_2\in \overline{\N}(v_1)}}\sum_{\substack{u\in \N(v_1)}} Q_{uu_2}^2&
 \leq \sum_{u_2\in [n]} \sum_{\substack{u_1\in [n] ,
 v_1\in \N(u_1)}}\sum_{u\in \N(v_1)} Q_{uu_2}^2 \leq  d_1d_2\sum_{u\not=u_2}Q_{uu_2}^2=\frac{d_2(n-1)\tilde{\sigma}^2}{d_2-1},  
\end{align*}
where the second inequality comes from the fact that each $u\in [n]$ is counted $d_1d_2$ times in the sum. Then the second factor in \eqref{eq:QQQQQQ} is bounded by 
\begin{align}
 d_1^2(d_2-1)(n-d_2)\cdot \frac{d_2(n-1)\tilde{\sigma}^2}{d_2-1} 
    &= d_1^2d_2(n-1)(n-d_2)\tilde{\sigma}^2.\label{eq:notethat}
\end{align}
From \eqref{eq:QQQQQQ}, \eqref{eq:SIXX} and \eqref{eq:notethat},  we have 
\begin{align}
  &\frac{2d_2}{\mu d_1^2n(n-1)(n-d_2)} S_{11}
  \leq  \frac{2d_2}{\mu d_1^2n(n-1)(n-d_2)} \cdot  d_1^2d_2(n-1)(n-d_2)\tilde{\sigma}^2 =  \frac{2\tilde{\sigma}^2}{\mu} \cdot \frac{d_2^2}{n}. \label{eq:newbound1}
\end{align} Similarly,  by Cauchy's inequality, 
\begin{align}
   S_{12}
    \leq &[d_1^2d_2(n-1)(n-d_2)\tilde{\sigma}^2]^{1/2} \left( \sum_{\substack{u_1\in [n], v_1\in \N(u_1), u_2\in \overline{\N}(v_1)}}\sum_{(u_3,u_4,v_2,v_3)\in  \overline{\mathcal S}_1(u_1,u_2,v_1)}~\sum_{\substack{u\in \N(v_2)}} Q_{uu_4}^2\right)^{1/2}. \notag
\end{align}
For a given $(v_2,u_4)$, by taking $v_3\in \N(u_4),u_2\in \N(v_2), v_1\in \overline{\N}(u_2), u_1\in \N(v_1), u_3\in \N(v_1), u_3\not=u_1$, there are at most 
$(m-d_1)d_1d_2^2(d_2-1)$ many tuples $(u_1,u_2,u_3,u_4,v_1,v_3)$ such that $(u_1,u_2,u_3,u_4,v_1,v_2, v_3)\in  \overline{\mathcal S_1}(u_1,u_2,v_1).$
Hence 
\begin{align*}
  & \sum_{\substack{u_1\in [n]\\v_1\in \N(u_1), u_2\in \overline{\N}(v_1)}}\sum_{(u_3,u_4,v_2,v_3)\in  \overline{\mathcal S}_1(u_1,u_2,v_1)}~\sum_{\substack{u\in \N(v_2)}} Q_{uu_4}^2
  \leq (m-d_1)d_1 d_2^2(d_2-1) \sum_{v_2\in [m], u_4\in [n], u\in \N(v_2)} Q_{uu_4}^2\\
  \leq & (m-d_1)d_1^2d_2^2(d_2-1)\sum_{u\not=u_4}Q_{uu_4}^2
  =(m-d_1)(n-1)d_1d_2^2\tilde{\sigma}^2=d_1^2d_2(n-1)(n-d_2)\tilde{\sigma}^2.
\end{align*}
Then 
\begin{align}
  & \frac{2d_2}{\mu d_1^2n(n-1)(n-d_2)} S_{12}
\leq  \frac{2\tilde{\sigma}^2}{\mu} \cdot \frac{d_2^2}{n}.\label{eq:newbound2}
\end{align}
For the third term in \eqref{eq:Q1Q2Q3}, we have 
\begin{align}
   S_{13}
    \leq &[d_1^2(d_2-1)(n-1)(n-d_2)\tilde{\sigma}^2]^{1/2} \left( \sum_{\substack{u_1\in [n]\\ v_1\in \N(u_1), u_2\in \overline{\N}(v_1)}}\sum_{(u_3,u_4,v_2,v_3)\in  \overline{\mathcal S}_1(u_1,u_2,v_1)}~\sum_{\substack{u\in \N(v_3)}} Q_{uu_3}^2\right)^{1/2}. \notag
\end{align}
For fixed $v_3,u_3$, by choosing $v_1\in \N(u_3), u_1\in \N(v_1), u_1\not=u_3, u_2\in \overline{\N}(v_1),v_2\in \N(u_2),u_4\in \N(u_3),$ there are at most $d_1^2d_2(d_2-1)(n-d_2)$ many tuples $(u_1,u_2,u_3,u_4,v_1,v_2, v_3)$ such that $(u_1,u_2,u_3,u_4,v_1,v_2, v_3)\in  \overline{\mathcal S}_1(u_1,u_2,v_1).$
Therefore by the same argument, we have
\begin{align*}
  &\sum_{\substack{u_1\in [n], v_1\in \N(u_1), u_2\in \overline{\N}(v_1)}}\sum_{(u_3,u_4,v_2,v_3)\in \overline{  S}_1(u_1,u_2,v_1)} \sum_{u\in \N(v_3)} Q_{uu_3}^2
  \leq d_1^2d_2(n-d_2)(n-1)\tilde{\sigma}^2,
\end{align*}
and 
\begin{align}
   \frac{2d_2}{\mu d_1^2n(n-1)(n-d_2)} S_{13}\leq \frac{2\tilde{\sigma}^2}{\mu} \cdot \frac{d_2^2}{n}. \label{eq:newbound3}
\end{align}
From \eqref{eq:newbound1}, \eqref{eq:newbound2} and \eqref{eq:newbound3}, the term \eqref{eq:77} is bounded by
$ \frac{6\tilde{\sigma}_2^2d_2^2}{\mu n}$.
 The bound on \eqref{eq:77} also holds for \eqref{eq:78} by the symmetric role of the two vertices $u_1,u_2$.

  Now it remains to estimate \eqref{eq:79}.  
Recall  $| \overline{\mathcal S}_3(u_1,u_2,v_1)|=d_1^4d_2(d_2-1)(n-d_2)^2$. By Cauchy's inequality, 
\begin{align}\label{eq:Quu11}
S_{31}\leq &\left(\sum_{\substack{u_1\not=u_2\in [n]\\ v_1\in [m]}}\sum_{(u_3,\dots,v_4)\in \overline{ \mathcal  S_3}(u_1,u_2,v_1)}\sum_{u\in \N(v_1)}Q_{u_1u_2}^2\right)^{1/2}\left(\sum_{\substack{u_1\not=u_2\in [n]\\ v_1\in [m]}}\sum_{(u_3,\dots,v_4)\in \overline{  \mathcal S_3}(u_1,u_2,v_1)}\sum_{\substack{u\in \N(v_1)}}Q_{uu_1}^2\right)^{1/2} \\
=&[d_1^4d_2(n-d_2)^2n(n-1)\tilde{\sigma}^2]^{1/2}\left(\sum_{\substack{u_1\not=u_2\in [n], v_1\in [m]}}\sum_{(u_3,\dots,v_4)\in \overline{ \mathcal S_3}(u_1,u_2,v_1)}\sum_{\substack{u\in \N(v_1)}}Q_{uu_1}^2\right)^{1/2}.\notag
\end{align}
  For fixed $v_1,u_1$, by taking $u_2\in \overline{\N}(v_1), u_3\in \N(v_1), v_2\in \N(u_2), u_4\in \overline{\N}(v_2), v_3\in \N(u_4),u_5\in \N(v_1),u_5\not=u_3, v_4\in \N(u_1), u_6\in \overline{\N}(v_4)$ and $v_5\in \N(u_6),$ there are at least $d_1^4d_2(d_2-1)(n-d_2)^3$ many choices of $(u_2,u_3,u_4,u_5,u_6,v_2,v_3,v_4,v_5)$ such that
 $(u_3,u_4,u_5,u_6,v_2,v_3,v_4,v_5)\in \overline{ \mathcal S_3}(u_1,u_2,v_1). $
  Then
 \begin{align}
    & \sum_{\substack{u_1\not=u_2\in [n], v_1\in [m]}}\sum_{(u_3,\dots,v_4)\in \overline{  \mathcal S_3}(u_1,u_2,v_1)}\sum_{\substack{u\in \N(v_1)}}Q_{uu_1}^2  \notag\\
     \leq &  d_1^4d_2(d_2-1)(n-d_2)^3\sum_{u_1\in [n],v_1\in [m]}\sum_{u\in \N (v_1)}Q_{uu_1}^2    \notag \\
     =& d_1^5d_2(d_2-1)(n-d_2)^3\sum_{u\not=u_1}Q_{uu_1}^2 
     =d_1^4d_2(n-d_2)^3(n-1)\tilde{\sigma}^2.\label{eq:Quu1}
 \end{align}
 Hence  
 \begin{align}
     \frac{2}{\mu d_1^4n(n-1)(n-d_2)^2} S_{31}
     \leq &\frac{2\tilde{\sigma}^2}{\mu}\frac{d_2\sqrt{n-d_2}}{\sqrt n}\leq \frac{2\tilde{\sigma}^2d_2}{\mu}.\label{eq:checkother}
 \end{align}
 Similarly, by the symmetry of $u_1$ and $u_2$, we obtain
$
     \frac{2}{\mu d_1^4n(n-1)(n-d_2)^2}S_{32}
     \leq \frac{2\tilde{\sigma}^2d_2}{\mu}.
$
   Using the similar argument for the proof of \eqref{eq:Quu1}, 
 the upper bound in \eqref{eq:checkother} holds for all other $4$ terms in \eqref{eq:79} as well. 
Therefore \eqref{eq:79} is bounded by
$ \frac{12\tilde{\sigma}^2d_2}{\mu}.$
Combining the estimates for \eqref{eq:77}, \eqref{eq:78} and \eqref{eq:79}, since $d_2\leq n$, we then obtain
 \begin{align}\label{eq:rmk}
     \mathbb E[D \mathbf{1}_{\mathcal B}\mid X]\leq \frac{12\tilde{\sigma}^2d_2^2}{\mu n} +\frac{12\tilde{\sigma}^2d_2}{\mu}\leq \frac{24\tilde{\sigma}^2d_2}{\mu}.
 \end{align}
By taking $\tau^2=24\tilde{\sigma}^2d_2, c=12d_2a$ in Theorem \ref{thm:tailestimate1}, the result follows.
\end{proof}

Similar to Corollary \ref{cor:concentration}, we obtain the following concentration inequalities from Theorem \ref{thm:fQX}.
\begin{corollary}\label{cor:cornew}
Let $\gamma_0=\frac{1}{p}-1=\frac{2d_2}{n-2d_2}, c_0=\frac{p}{6}=\frac{1}{6}(1-\frac{2d_2}{n}).$ We have 
\begin{align}
  \mathbb P (g_Q(X)\geq (1+\gamma_0)\mu+t)&\leq \exp\left(-c_0\frac{\tilde{\sigma}^2}{d_2a^2}h\left( \frac{at}{2\tilde{\sigma}^2}\right)\right),\label{eq:ineq1} \\
  \mathbb P (|g_Q(X)-\mu|\geq \gamma_0\mu+t)& \leq 2\exp \left(-\frac{c_0t^2}{8d_2(\tilde{\sigma}^2+\frac{1}{6}at)}\right). \label{eq:ineq3} 
\end{align}
\end{corollary}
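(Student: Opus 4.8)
The plan is to obtain Corollary~\ref{cor:cornew} from Theorem~\ref{thm:fQX} by repeating, essentially verbatim, the manipulations that led from Theorem~\ref{thm:fQX1} to Corollary~\ref{cor:concentration}. The first step is to record the elementary relations among the parameters. Since $p=1-\tfrac{2d_2}{n}$ we have $\tfrac{\mu}{p}=(1+\gamma_0)\mu$ with $\gamma_0=\tfrac1p-1=\tfrac{2d_2}{n-2d_2}$, and by definition $c_0=\tfrac p6$. Moreover $\mu\ge 0$ because $Q$ has nonnegative entries, and since $n-2d_2<n-d_2$ one checks $p'+\gamma_0=1+2d_2\big(\tfrac{1}{n-2d_2}-\tfrac{1}{n-d_2}\big)>1$, hence $p'\mu\ge(1-\gamma_0)\mu$.

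For the upper tail \eqref{eq:ineq1}, substituting $\tfrac\mu p=(1+\gamma_0)\mu$ into \eqref{eq:Xupper} yields, for all $t\ge 0$,
\[
\mathbb P\big(g_Q(X)-(1+\gamma_0)\mu\ge t\big)\le \exp\!\left(-\frac{\tilde\sigma^2}{6d_2pa^2}\,h\!\left(\frac{pat}{2\tilde\sigma^2}\right)\right).
\]
I would then invoke the inequality $p^{-1}h(px)\ge p\,h(x)$, valid for $p\in[0,1]$ and $x\ge0$ (the same one used in Corollary~\ref{cor:concentration}, from the proof of Proposition~2.3(c) in \cite{cook2018size}), applied with $x=\tfrac{at}{2\tilde\sigma^2}$, to bound the exponent below by $\tfrac{p}{6}\cdot\tfrac{\tilde\sigma^2}{d_2a^2}h\!\big(\tfrac{at}{2\tilde\sigma^2}\big)=c_0\tfrac{\tilde\sigma^2}{d_2a^2}h\!\big(\tfrac{at}{2\tilde\sigma^2}\big)$, which is \eqref{eq:ineq1}. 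For the matching lower tail, the inclusion $\{g_Q(X)\le(1-\gamma_0)\mu-t\}\subseteq\{g_Q(X)\le p'\mu-t\}$ together with \eqref{eq:Xlower} gives $\mathbb P(g_Q(X)-\mu\le-\gamma_0\mu-t)\le\exp(-\tfrac{\tilde\sigma^2}{6d_2a^2}h(\tfrac{at}{2\tilde\sigma^2}))$, and since $p\le1$ this is at most the same quantity appearing in \eqref{eq:ineq1}. Adding the two tail bounds gives $\mathbb P(|g_Q(X)-\mu|\ge\gamma_0\mu+t)\le 2\exp(-c_0\tfrac{\tilde\sigma^2}{d_2a^2}h(\tfrac{at}{2\tilde\sigma^2}))$.

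The last step is to pass to the sub-Gaussian form \eqref{eq:ineq3} by applying the standard estimate $h(x)\ge\tfrac{x^2}{2(1+x/3)}$ for $x\ge0$ with $x=\tfrac{at}{2\tilde\sigma^2}$; a one-line computation turns the exponent into $\tfrac{c_0t^2}{8d_2(\tilde\sigma^2+at/6)}$, completing the proof. There is no real obstacle here — the corollary is pure bookkeeping on top of Theorem~\ref{thm:fQX}. The only point requiring care, and the place where the argument differs cosmetically from the $d$-regular case of \cite{cook2018size}, is that the exact identity $p'+\gamma_0=1$ available in the regular setting is now only the inequality $p'+\gamma_0>1$, which is still all that the inclusion above needs; beyond that, one simply has to carry the extra factor $d_2$ and the $\tfrac12$ inside $h$ correctly through the two elementary inequalities for $h$.
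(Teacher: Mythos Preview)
Your proposal is correct and follows exactly the approach the paper intends: the paper simply states that the corollary is obtained ``similar to Corollary~\ref{cor:concentration}'' from Theorem~\ref{thm:fQX}, and you have faithfully carried out those same manipulations with the new parameters. Your observation that the exact identity $p'+\gamma_0=1$ from the earlier setting becomes only the inequality $p'+\gamma_0>1$ here, and that this still suffices for the inclusion $\{g_Q(X)\le(1-\gamma_0)\mu-t\}\subseteq\{g_Q(X)\le p'\mu-t\}$, is the one genuinely new wrinkle and you handled it correctly.
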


\section{Proof of Theorem \ref{thm:main2}}\label{sec:finalsec}
 The remaining analysis is similar to the analysis in  Section \ref{sec:KSargument} for the proof of Theorem \ref{thm:main1}. We will only address the main differences.  Let $M=XX^T-d_1 I$. For convenience we now write \eqref{eq:fQm} as
\[g_Q(X)=f_Q(M)=\sum_{u_1,u_2\in [n]}Q_{u_1u_2}M_{u_1u_2}.\]
Recall
$
    \lambda(M)=\sup_{x\in S_0^{n-1}} |\langle x,Mx\rangle |.$
For fixed $x\in S^{n-1}_0$, we split the sum into light and heavy parts. Define \textit{light and heavy couples} by
\begin{align*}
    \mathcal L(x)&=\{u,v\in [n]: |x_ux_v|\leq \sqrt{d_1(d_2-1)}/n \}, \quad 
     \mathcal H(x)=\{u,v\in [n]: |x_ux_v|> \sqrt{d_1(d_2-1)}/n \}.
\end{align*}
We can decompose the linear form $f_Q(M)$ as 
\[ f_{xx^{\top}}(M)=\langle x, Mx\rangle =f_{\mathcal L(x)}(M)+f_{\mathcal H(x)}(M),\]
where 
\begin{align}
 &f_{\mathcal L(x)}(M)=\sum_{(u,v)\in \mathcal L(x)}x_ux_vM_{uv},\quad f_{\mathcal H(x)}(M)=\sum_{(u,v)\in \mathcal H(x)}x_ux_vM_{uv}.  
\end{align}
% Then we treat the contribution from light and heavy couples in different ways.

\begin{lemma}\label{eq:b+3}
For any fixed $x\in S_0^{n-1}$,  $\beta \geq 4\gamma_0\sqrt{d_1(d_2-1)}$, and $n\geq 2$,
\begin{align}\label{eq:lightinequality}
    \mathbb P\left(|f_{\mathcal L(x)}(M)|\geq (\beta+3)\sqrt{d_1(d_2-1)}\right)\leq 4 \exp \left( -\frac{3c_0\beta^2n}{8d_2(24+\beta)}\right).
\end{align}
\end{lemma}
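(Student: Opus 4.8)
The plan is to mimic the proof of Lemma~\ref{lem:52}, with $g_Q(X)=f_Q(M)$ in place of $f_Q(X)$, the concentration inequality \eqref{eq:ineq3} of Corollary~\ref{cor:cornew} in place of \eqref{eq:twosided}, and the observation that $(XX^{\top})_{uu}=d_1$, so that $M$ has zero diagonal and the diagonal couples contribute nothing to $f_{\mathcal L(x)}(M)$. First I would split $\mathcal L(x)=\mathcal L_+(x)\cup\mathcal L_-(x)$ according to the sign of $x_ux_v$, exactly as just above, so that $f_{\mathcal L(x)}(M)=f_{\mathcal L_+(x)}(M)+f_{\mathcal L_-(x)}(M)$; this split is forced, because Corollary~\ref{cor:cornew} applies only to matrices with nonnegative entries. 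Since $M_{uu}=0$, I can take $Q^+$ to be the symmetric matrix with $Q^+_{uv}=x_ux_v$ for $(u,v)\in\mathcal L_+(x)$, $u\ne v$ (and $0$ elsewhere), and $Q^-$ with $Q^-_{uv}=|x_ux_v|$ for $(u,v)\in\mathcal L_-(x)$, $u\ne v$ (and $0$ elsewhere); each then has zero diagonal, entries in $[0,a]$ with $a:=\sqrt{d_1(d_2-1)}/n$, and $f_{\mathcal L_+(x)}(M)=g_{Q^+}(X)$, $f_{\mathcal L_-(x)}(M)=-g_{Q^-}(X)$, so Corollary~\ref{cor:cornew} applies to each half.

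Next I would estimate, for $Q\in\{Q^+,Q^-\}$, the parameters in \eqref{eq:defparameter}. Because the nonzero entries of $Q$ sit on couples of a single sign, Cauchy--Schwarz gives $\sum_{u\ne v}Q_{uv}\le\tfrac12\|x\|_1^2\le\tfrac n2$ (using $\sum_u x_u=0$) and $\sum_{u\ne v}Q_{uv}^2\le\|x\|_2^4=1$, so for $n\ge 2$
\[
\mu:=\mathbb{E}\,g_Q(X)\le\frac{d_1(d_2-1)}{n-1}\cdot\frac n2\le d_1(d_2-1),
\qquad
\tilde\sigma^2\le\frac{d_1(d_2-1)}{n-1}\le\frac{2d_1(d_2-1)}{n}.
\]
The hypothesis $\beta\ge 4\gamma_0\sqrt{d_1(d_2-1)}$ then yields $\gamma_0\mu\le\gamma_0 d_1(d_2-1)\le\tfrac{\beta}{4}\sqrt{d_1(d_2-1)}$, so choosing the threshold $t$ in \eqref{eq:ineq3} of order $\beta\sqrt{d_1(d_2-1)}$ (so that $\gamma_0\mu+t$ is a fixed multiple of $\beta\sqrt{d_1(d_2-1)}$), substituting the bounds above on $\tilde\sigma^2$ and $a$, and using that $s\mapsto s^2/(c_1+c_2 s)$ is increasing for $s\ge0$, one gets, after simplification, a bound of the form $2\exp\bigl(-3c_0\beta^2 n/(8d_2(24+\beta))\bigr)$ for $\mathbb{P}(|g_{Q^\pm}(X)-\mathbb{E}\,g_{Q^\pm}(X)|\ge \mathrm{const}\cdot\beta\sqrt{d_1(d_2-1)})$. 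Writing $f_{\mathcal L(x)}(M)-\mathbb{E}\,f_{\mathcal L(x)}(M)=(g_{Q^+}(X)-\mu^+)-(g_{Q^-}(X)-\mu^-)$ and taking a union bound over the two sign classes then controls $|f_{\mathcal L(x)}(M)-\mathbb{E}\,f_{\mathcal L(x)}(M)|$ by $\beta\sqrt{d_1(d_2-1)}$ off an event of probability $\le 4\exp\bigl(-3c_0\beta^2 n/(8d_2(24+\beta))\bigr)$.

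Finally, I would control the mean of the light part, the analogue of Lemma~\ref{lem:E}. Since $x\in S_0^{n-1}$, $\sum_{u\ne v}x_ux_v=(\sum_u x_u)^2-\|x\|_2^2=-1$, so
\[
\bigl|\mathbb{E}\,f_{\mathcal L(x)}(M)\bigr|
=\frac{d_1(d_2-1)}{n-1}\Bigl|\sum_{(u,v)\in\mathcal L(x),\,u\ne v}x_ux_v\Bigr|
\le\frac{d_1(d_2-1)}{n-1}\Bigl(1+\sum_{(u,v)\in\mathcal H(x)}|x_ux_v|\Bigr),
\]
and on $\mathcal H(x)$ each $|x_ux_v|>a$, whence $\sum_{(u,v)\in\mathcal H(x)}|x_ux_v|\le a^{-1}\sum_{(u,v)\in\mathcal H(x)}|x_ux_v|^2\le a^{-1}=n/\sqrt{d_1(d_2-1)}$; this bounds $|\mathbb{E}\,f_{\mathcal L(x)}(M)|$ by $3\sqrt{d_1(d_2-1)}$. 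Combining, on the complement of the two bad events $|f_{\mathcal L(x)}(M)|\le|f_{\mathcal L(x)}(M)-\mathbb{E}\,f_{\mathcal L(x)}(M)|+|\mathbb{E}\,f_{\mathcal L(x)}(M)|\le(\beta+3)\sqrt{d_1(d_2-1)}$, which is \eqref{eq:lightinequality}. The step I expect to be most delicate is the constant bookkeeping: one must distribute the allowance $\beta\sqrt{d_1(d_2-1)}$ between the two sign classes and against the terms $\gamma_0\mu^{\pm}$ so that exactly the stated hypothesis $\beta\ge 4\gamma_0\sqrt{d_1(d_2-1)}$ suffices, and one must verify that the light-part mean contributes only the additive $3\sqrt{d_1(d_2-1)}$ — which relies essentially on the cancellation $\mathbf 1^{\top}x=0$, since $\mu^{\pm}$ itself is of the larger order $d_1(d_2-1)$ rather than $\sqrt{d_1(d_2-1)}$.
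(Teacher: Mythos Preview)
Your proposal is correct and follows essentially the same route as the paper: split $\mathcal L(x)$ by sign, apply Corollary~\ref{cor:cornew} (inequality~\eqref{eq:ineq3}) to each half with $a=\sqrt{d_1(d_2-1)}/n$, and then add a separate bound $|\mathbb E f_{\mathcal L(x)}(M)|\le 3\sqrt{d_1(d_2-1)}$. The only notable difference is in this last step: the paper writes $|\mathbb E f_{\mathcal L(x)}(M)|\le |\mathbb E\langle x,Mx\rangle|+|\mathbb E f_{\mathcal H(x)}(M)|$ and controls the first term by $\bigl\|\mathbb E M-\tfrac{d_1(d_2-1)}{n}\mathbf 1_n\mathbf 1_n^{\top}\bigr\|_F=\tfrac{d_1(d_2-1)}{\sqrt{n-1}}$, whereas you use the identity $\sum_{u\ne v}x_ux_v=-1$ directly, which is slightly simpler and yields the sharper first term $\tfrac{d_1(d_2-1)}{n-1}$.
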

\begin{proof}
Recall for $u\not=v$, $\mathbb EM_{uv}=\frac{d_1(d_2-1)}{n-1}.$ For any fixed $x\in S_0^{n-1}$,
\begin{align*}
    |\mathbb Ef_{\mathcal L(x)}(M)|&\leq |\mathbb E\langle x,Mx\rangle |+|\mathbb Ef_{\mathcal H(x)}(M)|\\
    &\leq \left|x^{\top}\left(\mathbb EM-\frac{d_1(d_2-1)}{n}\mathbf{1}_n\mathbf{1}_n^{\top}\right) x\right|+\frac{d_1(d_2-1)}{n-1}\sum_{(u,v)\in \mathcal H(x)}|x_ux_v|\\
    &\leq \left\| \mathbb EM-\frac{d_1(d_2-1)}{n}\mathbf{1}_n\mathbf{1}_n^{\top}\right\|_{F}+\frac{d_1(d_2-1)}{n-1}\sum_{(u,v)\in \mathcal H(x)}\frac{|x_ux_v|^2}{\sqrt{d_1(d_2-1)}/n}\\
    &\leq \frac{d_1(d_2-1)}{\sqrt{n-1}}+\sqrt{d_1(d_2-1)}\frac{n}{n-1}\leq 3\sqrt{d_1(d_2-1)}.\label{eq:twopart2}
\end{align*}
We split the set $\mathcal L(x)$  into two parts as  $\mathcal L(x)=\mathcal L_+(x)\cup \mathcal L_-(x)$ where  
\[\mathcal L_+(x)=\{u,v\in [n]: 0\leq x_ux_v\leq \sqrt{d_1(d_2-1)}/n \}, \quad \mathcal L_-(x)=\mathcal L\setminus \mathcal L_+(x).\]
Then 
\begin{align}
      | f_{\mathcal L(x)}(M)-\mathbb Ef_{\mathcal L(x)}(M)| 
     \leq & | f_{\mathcal L_+(x)}(M)-\mathbb Ef_{\mathcal L_+(x)}(M)|+| f_{\mathcal L_-(x)}(M)-\mathbb Ef_{\mathcal L_-(x)}(M)|.\label{eq:splitterm1}
\end{align}
Consider the first term in the right hand side of \eqref{eq:splitterm1}. By Cauchy's inequality,
\begin{align*}
    \mu=\mathbb Ef_{\mathcal L_+(x)}(M)\leq \frac{d_1(d_2-1)}{n-1}\sum_{uv}|x_ux_v|\leq d_1(d_2-1)\frac{n}{n-1}\leq 2d_1(d_2-1).  
\end{align*}
Also we have
$
        \tilde{\sigma}^2=\sum_{uv\in \mathcal L_+(x)}|x_ux_v|^2\mathbb EM_{uv}\leq \frac{d_1(d_2-1)}{n-1}.
$
Then by \eqref{eq:ineq3} with $a=\frac{\sqrt{d_1(d_2-1)}}{n}$, for any $\beta>4\gamma_0\sqrt{d_1(d_2-1)},$  we have
\begin{align*}
    &\mathbb P\left( | f_{\mathcal L_+(x)}(M)|-\mathbb Ef_{\mathcal L_+(x)}(M)|\geq (\beta/2)\sqrt{d_1(d_2-1)}\right)
\leq  2\exp \left( -\frac{3c_0\beta^2n}{8d_2(24+\beta)}\right).
\end{align*}
The same bound holds for the second term in \eqref{eq:splitterm1}.
Then  with probability at least $1-4\exp \left( -\frac{3c_0\beta^2n}{8d_2(24+\beta)}\right)$,  
\begin{align*}
    |f_{\mathcal L(x)}(M)|&\leq |\mathbb Ef_{\mathcal L(x)}(M)|+ | f_{\mathcal L(x)}(M)-\mathbb Ef_{\mathcal L(x)}(M)|\leq 
    (3+\beta)\sqrt{d_1(d_2-1)}.
\end{align*}
This completes the proof.
\end{proof}

Similar to Lemma \ref{lem:DPproperty},  the following lemma shows that the discrepancy property for $M=XX^{\top}-d_1I$ holds for a random bipartite biregular graph with high probability. The proof is similar, and we skip the details.
\begin{lemma}\label{lem:DPproperty11}
Let $M=XX^{\top}-d_1I$ where $X$ is the biadjacency matrix of a uniform random $(n,m,d_1,d_2)$-bipartite biregular graph. For any $K\geq 0$,
With probability at least $1-n^{-K}$, $\textnormal{DP}\left(\delta,\kappa_1,\kappa_2\right)$ holds for $M$ with $\delta=\frac{d_1(d_2-1)}{n-1}$,
$\kappa_1=e^2(1+\gamma_0)^2$,and $ \kappa_2=\frac{8d_2}{c_0}(1+\gamma_0)(K+4)$.
\end{lemma}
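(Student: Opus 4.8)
The plan is to run the argument of Lemma~\ref{lem:DPproperty} essentially verbatim, with $X$ and Corollary~\ref{cor:concentration} replaced by $M=XX^{\top}-d_1I$ and Corollary~\ref{cor:cornew}. Fix non-empty $S,T\subseteq[n]$. Since $M$ is symmetric and $M_{ii}=(XX^{\top})_{ii}-d_1=0$, I would take $Q=\tfrac12\big(\mathbf 1_S\mathbf 1_T^{\top}+\mathbf 1_T\mathbf 1_S^{\top}\big)$ with all diagonal entries reset to $0$; then $Q$ is symmetric with $Q_{ii}=0$, has entries in $[0,1]$ (so $a=1$), and $g_Q(X)=f_Q(M)=e(S,T)$ because $M$ is symmetric. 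With $\delta=\frac{d_1(d_2-1)}{n-1}$ one has $\mu(S,T):=\mathbb E\,e(S,T)=\delta\big(|S||T|-|S\cap T|\big)\le\delta|S||T|$, and since the entries of $Q$ lie in $[0,1]$ the variance proxy of \eqref{eq:defparameter} obeys $\tilde\sigma^2\le\mu(S,T)$. (Here I use $en/(|S|\vee|T|)$ in place of $em/(|S|\vee|T|)$ throughout; since $n\le m$ this only strengthens the conclusion.)

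Following \cite{cook2018size}, set $\gamma_1=e^2(1+\gamma_0)^2-1$ and let $\gamma^{*}=\gamma^{*}(S,T)\ge\gamma_0$ be the unique solution of
\[
\frac{c_0}{d_2}\,h\!\left(\frac{x-\gamma_0}{2}\right)\mu(S,T)=(K+4)\,(|S|\vee|T|)\,\log\!\left(\frac{en}{|S|\vee|T|}\right),
\]
and put $\gamma=\gamma(S,T)=\max(\gamma^{*},\gamma_1)$. Choosing $t=(\gamma-\gamma_0)\mu(S,T)$ in \eqref{eq:ineq1} (with $a=1$) and using that $s\mapsto s\,h(c/s)$ is nonincreasing (so that $\tilde\sigma^2$ may be replaced by the larger $\mu(S,T)$), I get
\[
\mathbb P\big(e(S,T)\ge(1+\gamma)\mu(S,T)\big)\le\exp\!\left(-\frac{c_0}{d_2}\,\mu(S,T)\,h\!\left(\frac{\gamma-\gamma_0}{2}\right)\right)\le\exp\!\left(-(K+4)(|S|\vee|T|)\log\frac{en}{|S|\vee|T|}\right).
\]
Union bounding over all $S,T$ of fixed sizes via $\binom ns\binom nt\le\exp\!\big(2(|S|\vee|T|)\log\tfrac{en}{|S|\vee|T|}\big)$ and then over all sizes, exactly as in Lemma~\ref{lem:DPproperty}, gives that with probability at least $1-n^{-K}$ the bound $e(S,T)\le(1+\gamma(S,T))\mu(S,T)$ holds for every non-empty $S,T\subseteq[n]$.

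On this event I would split into the two cases of the discrepancy property. If $\gamma(S,T)=\gamma_1$, then $e(S,T)\le(1+\gamma_1)\mu(S,T)\le e^2(1+\gamma_0)^2\,\delta|S||T|$, which is Case~\eqref{case1} with $\kappa_1=e^2(1+\gamma_0)^2$. If $\gamma(S,T)=\gamma^{*}\ge\gamma_1$, then combining $e(S,T)\le(1+\gamma^{*})\mu(S,T)$ with the defining equation for $\gamma^{*}$ yields
\[
\frac{c_0}{d_2}\cdot\frac{h\!\left(\tfrac{\gamma^{*}-\gamma_0}{2}\right)}{1+\gamma^{*}}\,e(S,T)\le(K+4)(|S|\vee|T|)\log\!\left(\frac{en}{|S|\vee|T|}\right),
\]
and Case~\eqref{case2} with $\kappa_2=\frac{8d_2}{c_0}(1+\gamma_0)(K+4)$ then follows once one knows
\[
\frac{h\!\left(\tfrac{\gamma^{*}-\gamma_0}{2}\right)}{1+\gamma^{*}}\ \ge\ \frac{1}{8(1+\gamma_0)}\,\log\frac{e(S,T)}{\delta|S||T|}\qquad\text{for }\gamma^{*}\ge\gamma_1 .
\]

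The hard part is precisely this last elementary inequality. In \cite{cook2018size} (proof of Lemma~6.4) it is shown that $\frac{h(\gamma^{*}-\gamma_0)}{1+\gamma^{*}}\ge\frac{1}{2(1+\gamma_0)}\log\frac{e(S,T)}{\mu(S,T)}$ when $\gamma^{*}\ge\gamma_1$; the extra factor $\tfrac12$ inside $h$ that \eqref{eq:ineq1} carries (compared with \eqref{eq:oneside}) costs one more constant, via the pointwise bound $h(x/2)\ge\tfrac14 h(x)$ valid for all $x\ge0$ (checked by comparing the $x^2/2$ and $x\log x$ regimes of $h$), together with $\mu(S,T)\le\delta|S||T|$. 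This extra factor $4$, combined with the $1/d_2$ in front of $h$ in \eqref{eq:ineq1}, is exactly what turns the constant $\frac{2}{c_0}$ of Lemma~\ref{lem:DPproperty} into $\frac{8d_2}{c_0}$ here. Everything else---the choice of $Q$, the mean/variance bookkeeping, and the two union bounds---is routine and structurally identical to the proof of Lemma~\ref{lem:DPproperty}.
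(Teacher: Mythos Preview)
Your proposal is correct and is exactly the argument the paper has in mind: the paper omits the proof entirely, saying only that it is ``similar'' to Lemma~\ref{lem:DPproperty}, and you have supplied precisely those details---the symmetrized choice $Q=\tfrac12(\mathbf 1_S\mathbf 1_T^{\top}+\mathbf 1_T\mathbf 1_S^{\top})$ with zero diagonal, the replacement of \eqref{eq:oneside} by \eqref{eq:ineq1}, and the bookkeeping that turns $\tfrac{2}{c_0}$ into $\tfrac{8d_2}{c_0}$. One small remark: your pointwise bound $h(x/2)\ge\tfrac14 h(x)$ does not need a separate ``$x^2/2$ versus $x\log x$'' check; it is the case $p=\tfrac12$ of the inequality $p^{-1}h(px)\ge p\,h(x)$ already invoked in the proof of Corollary~\ref{cor:concentration}.
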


\begin{lemma}\label{lem:a0}
Let $M=XX^{\top}-d_1I$ where $X$ is  the biadjacency matrix of a $(n,m,d_1,d_2)$-bipartite biregular graph. Suppose $M$ has $\textnormal{DP}(\delta,\kappa_1,\kappa_2)$ with $\delta,\kappa_1,\kappa_2$ given in Lemma \ref{lem:DPproperty11}. Then there exists a constant $\alpha_0$ depending on $\kappa_1,\kappa_2$ such that
$ f_{\mathcal H(x)}(M)\leq \alpha_0\sqrt{d_1(d_2-1)},$
where  $\alpha_0=16+64(\kappa_1+1)+64\kappa_2\left(1+\frac{2}{\kappa_1\log\kappa_1}\right).$
\end{lemma}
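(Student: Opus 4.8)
The plan is to carry out the heavy-couple half of the Kahn--Szemer\'{e}di argument for $M=XX^\top-d_1I$, following the proof of Lemma~6.6 in \cite{cook2018size} with $X$ replaced by $M$, the density $\delta=\frac{d_1(d_2-1)}{n-1}$, and the heaviness threshold $\theta:=\sqrt{d_1(d_2-1)}/n$ playing the role of $\sqrt{d_1}/m$. First note that, since $M_{uv}=\codeg(X,u,v)\ge 0$ for $u\ne v$ and $M_{uu}=0$, one has $|f_{\mathcal H(x)}(M)|\le\sum_{(u,v)\in\mathcal H(x)}|x_u|\,|x_v|\,M_{uv}$, so we may pass to the entrywise absolute value of $x$ (still of unit $\ell_2$ norm; the zero-sum condition plays no role for the heavy part) and bound $\sum_{(u,v)\in\mathcal H(x)}x_ux_vM_{uv}$ for $x\ge 0$. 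Then I would dyadically decompose the coordinates: for integers $i$ in a suitable finite range set $I_i=\{u:x_u\in(2^{i-1}\sqrt\theta,\,2^i\sqrt\theta]\}$ and $\sigma_i:=\sum_{u\in I_i}x_u^2$, so that $\sum_i\sigma_i=1$, $4^{i-1}\theta|I_i|\le\sigma_i\le 4^i\theta|I_i|$, and a couple in $I_i\times I_j$ can be heavy only when $2^{i+j}\ge 1$. Coordinates outside this range, and couples meeting a block of size at most $1$, are handled directly by the trivial estimates $0\le M_{uv}\le d_1$ and $\sum_{v}M_{uv}=d_1(d_2-1)$ together with $\sum_u x_u^2=1$; these account for the additive constant in $\alpha_0$.

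It then remains to bound $\sum_{(i,j)}2^{i+j}\theta\,e(I_i,I_j)$ over the surviving block pairs, where $e(S,T):=\sum_{u\in S,v\in T}M_{uv}$, by splitting each pair according to the dichotomy of $\textnormal{DP}(\delta,\kappa_1,\kappa_2)$. For a pair in Case~\ref{case1}, $e(I_i,I_j)\le\kappa_1\delta|I_i||I_j|$; combining this with $|I_i||I_j|\le 16\,\sigma_i\sigma_j/(4^{i+j}\theta^2)$ and $2^{i+j}\ge 1$ gives $2^{i+j}\theta\,e(I_i,I_j)\le 16\kappa_1\frac{\delta}{\theta}\sigma_i\sigma_j$, and summing over $i,j$ with $\sum_i\sigma_i=1$ and $\frac{\delta}{\theta}=\frac{n}{n-1}\sqrt{d_1(d_2-1)}\le 2\sqrt{d_1(d_2-1)}$ bounds the total Case~\ref{case1} contribution by a constant multiple of $\kappa_1\sqrt{d_1(d_2-1)}$, producing the $64(\kappa_1+1)$ term in $\alpha_0$.

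The main obstacle is the Case~\ref{case2} contribution, which needs the entropy estimate. For those pairs one only knows $e(I_i,I_j)\log\!\big(e(I_i,I_j)/(\delta|I_i||I_j|)\big)\le\kappa_2(|I_i|\vee|I_j|)\log\!\big(en/(|I_i|\vee|I_j|)\big)$; since a Case~\ref{case2} pair also satisfies $e(I_i,I_j)>\kappa_1\delta|I_i||I_j|\ge e^2\delta|I_i||I_j|$ (as $\kappa_1\ge e^2$), the logarithm on the left is at least $2$, and in general one rearranges to $e(I_i,I_j)\le\frac{\kappa_2}{\log\!\big(e(I_i,I_j)/(\delta|I_i||I_j|)\big)}(|I_i|\vee|I_j|)\log\!\big(en/(|I_i|\vee|I_j|)\big)$ before multiplying by $2^{i+j}\theta$. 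The delicate step is organising the resulting double sum: one groups the pairs by which of $I_i,I_j$ is larger and by the dyadic size of the ratio $e(I_i,I_j)/(\delta|I_i||I_j|)$, uses the heaviness constraint $2^{i+j}\ge 1$, the monotonicity of $x\mapsto x\log(e/x)$ on $(0,1]$, and $\sum_i\sigma_i=1$, and then sums a geometric-type series in the logarithmic variable whose tail produces precisely the factor $\tfrac{1}{\kappa_1\log\kappa_1}$. Reproducing this computation exactly as in \cite[Lemma~6.6]{cook2018size}, but with $m$ replaced by $n$ and $\delta$ as above, bounds the Case~\ref{case2} total by $64\kappa_2\big(1+\tfrac{2}{\kappa_1\log\kappa_1}\big)\sqrt{d_1(d_2-1)}$. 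Adding the three contributions and tracking constants yields $f_{\mathcal H(x)}(M)\le\alpha_0\sqrt{d_1(d_2-1)}$ with $\alpha_0=16+64(\kappa_1+1)+64\kappa_2\big(1+\tfrac{2}{\kappa_1\log\kappa_1}\big)$, as claimed.
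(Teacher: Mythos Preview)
Your proposal is correct and takes essentially the same approach as the paper: the paper's proof consists solely of the observation that $\delta=\frac{d_1(d_2-1)}{n-1}\le\frac{2d_1(d_2-1)}{n}$ for $n\ge 2$ and the remark that the argument then follows verbatim from \cite[Lemma~6.6]{cook2018size}, which is precisely the dyadic Kahn--Szemer\'edi computation you outline. Your explicit use of $M_{uv}\ge 0$, the row-sum identity $\sum_{v}M_{uv}=d_1(d_2-1)$, and the bound $\delta/\theta\le 2\sqrt{d_1(d_2-1)}$ are exactly the adaptations needed.
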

\begin{proof} Note that $\delta=\frac{d_1(d_2-1)}{n-1}\leq \frac{2d_1(d_2-1)}{n}$ for $n\geq 2$.
The proof follows verbatim from  \cite[Lemma 6.6]{cook2018size}.
\end{proof}

Now we are ready to prove Theorem \ref{thm:main2} using the $\epsilon$-net argument.

\begin{lemma}
For $\epsilon\in (0,1/2)$, let  $\N_{\epsilon}^0$ be an $\epsilon$-net of $S_0^{n-1}$. Let $X$ be the biadjacency matrix of a $(n,m,d_1,d_2)$-bipartite biregular graph and $M=XX^{\top}-d_1I$. Then
\begin{align}\label{eq:lambdaM}
   \lambda(M)\leq \frac{1}{1-2\epsilon}\sup_{ x\in \N_{\epsilon}^0} |\langle x, Mx\rangle |.
\end{align}
\end{lemma}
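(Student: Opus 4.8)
The plan is the standard covering argument, with one wrinkle: the quadratic-form quantity $\lambda(M)$ controls the full operator norm of $M$ only on the subspace $\mathbf 1_n^\perp$, which is precisely the ambient space of the net $\N_\epsilon^0$.

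First I would record that $\mathbf 1_n$ is an eigenvector of $M=XX^\top-d_1I$. Since every row of $X$ has exactly $d_1$ ones and every column exactly $d_2$ ones, $X\mathbf 1_m=d_1\mathbf 1_n$ and $X^\top\mathbf 1_n=d_2\mathbf 1_m$, so $XX^\top\mathbf 1_n=d_1d_2\mathbf 1_n$ and hence $M\mathbf 1_n=d_1(d_2-1)\mathbf 1_n$. As $M$ is symmetric it maps $\mathbf 1_n^\perp$ into itself, so the restriction $M|_{\mathbf 1_n^\perp}$ is a symmetric operator whose operator norm equals $\sup_{x\in S_0^{n-1}}|\langle x,Mx\rangle|=\lambda(M)$; in particular $\|Mz\|\le\lambda(M)\|z\|$ for every $z\in\mathbf 1_n^\perp$.

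Next I would run the net estimate. Fix $x\in S_0^{n-1}$; since $S_0^{n-1}$ is compact and $z\mapsto\langle z,Mz\rangle$ is continuous, we may assume $x$ attains $\lambda(M)$ (otherwise work with an approximate maximizer and let the error tend to $0$ at the end). Pick $y\in\N_\epsilon^0$ with $\|x-y\|\le\epsilon$, and note $x-y\in\mathbf 1_n^\perp$. Writing $\langle x,Mx\rangle-\langle y,My\rangle=\langle x-y,Mx\rangle+\langle y,M(x-y)\rangle$ and using symmetry of $M$ together with $\|Mx\|\le\lambda(M)$, $\|y\|=1$, and $\|M(x-y)\|\le\lambda(M)\|x-y\|\le\epsilon\lambda(M)$, we obtain $|\langle x,Mx\rangle-\langle y,My\rangle|\le 2\epsilon\lambda(M)$. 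Therefore $\lambda(M)=|\langle x,Mx\rangle|\le\sup_{y\in\N_\epsilon^0}|\langle y,My\rangle|+2\epsilon\lambda(M)$, and rearranging (using $\epsilon<1/2$) yields \eqref{eq:lambdaM}.

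The only point requiring care is the invariance of $\mathbf 1_n^\perp$ under $M$: without it one would be forced to bound $\|Mx\|$ by the full operator norm of $M$, which contains the spurious eigenvalue $d_1(d_2-1)$ and would destroy the estimate. Everything else is a routine triangle-inequality and compactness computation, which is why such statements are usually cited as standard.
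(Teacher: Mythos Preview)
Your proof is correct and is precisely the standard $\epsilon$-net argument for symmetric operators restricted to an invariant subspace; the paper in fact omits the proof entirely (as it did for the analogous Lemma~\ref{eq:epsilonnetapprox}), so your write-up simply fills in what the authors left implicit. Your emphasis on the invariance of $\mathbf 1_n^\perp$ under $M$ is exactly the point that makes the restricted-net argument go through.
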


\begin{proof}[Proof of Theorem \ref{thm:main2}]

 Fix $K>0$. By Lemma \ref{lem:DPproperty11}, with probability at least $1-n^{-K}$, $M$ has $\textnormal{DP}(\delta,\kappa_1,\kappa_2)$ property where the parameters $\delta,\kappa_1,\kappa_2$ are given in Lemma \ref{lem:DPproperty11}. Let $\mathcal D$ be the event that this property holds. Then it suffices to show 
 \begin{align*}
     \mathbb P\left(\mathcal D\cap \left\{ \lambda(M)\geq \alpha\sqrt{d_1(d_2-1)}\right\}\right)&\leq e^{-n}.
 \end{align*}
 Take $\epsilon=1/4$ in \eqref{eq:lambdaM}. Then
 $
     \lambda(M)\leq 2\sup_{x\in \N_{\epsilon}^0} |\langle x, Mx\rangle |.
$
 We obtain
 \begin{align}\label{eq:obtain}
     \mathbb P\left(\mathcal D\cap \left\{ \lambda(M)\geq \alpha\sqrt{d_1(d_2-1)}\right\}\right)&\leq \sum_{x\in \N_{\epsilon}^0}\mathbb P\left(\mathcal D\cap \left\{ |\langle x,Mx\rangle |\geq (\alpha/2)\sqrt{d_1(d_1-1)}\right\}\right).
 \end{align}
 For any fixed $x\in \N_{\epsilon}$,
 \begin{align*}
 &\mathbb P\left(\mathcal D\cap \left\{ |\langle x,Mx\rangle |\geq (\alpha/2)\sqrt{d_1(d_2-1)}\right\}\right)\\
 &\leq  \mathbb P\left(\mathcal D\cap \left\{ |f_{\mathcal L(x)}(M)|\geq (\alpha/2)\sqrt{d_1(d_2-1)}-|f_{\mathcal H(x)}(M)|\right\}\right)\\
 &\leq \mathbb P\left( |f_{\mathcal L(x)}(M)|\geq (\alpha/2-\alpha_0)\sqrt{d_1(d_2-1)} \right).
 \end{align*}
 Take $\beta=\frac{1}{2}\alpha-\alpha_0-3$. When $\beta>4\gamma_0\sqrt{d_2(d_1-1)}$,  from \eqref{eq:lightinequality}, we have 
 \begin{align}\label{eq:true}
\mathbb P\left( \mathcal D\cap \left\{ |\langle x,Mx\rangle |\geq (\alpha/2)\sqrt{d_1(d_2-1)}\right\}\right)\leq 2\exp\left(-\frac{3c_0\beta^2n}{8d_2(24+\beta)}\right).
 \end{align}
 Recall $\gamma_0=\frac{2d_2}{n-2d_2}, c_0=\frac{1}{6}(1-\frac{2d_2}{n})$ in Corollary \ref{cor:cornew}. In  the  assumption of Theorem \ref{thm:main2} we also have $d_2\leq n/4$  and $ d_1\leq C_1n^2.$  Then it follows that $c_0\geq \frac{1}{12}, \gamma_0\leq 1$ and 
\begin{align}\label{eq:betta}
& 4\gamma_0 \sqrt{d_1(d_2-1)}=\frac{8d_2\sqrt{d_1(d_2-1)}}{n-2d_2}\leq \frac{8d_2\sqrt{C_1(d_2-1)}n}{n-2d_2}\leq 16d_2\sqrt{(d_2-1)C_1}.    
\end{align}
Also recall $\kappa_1=e^2(1+\gamma_0)^2$, $ \kappa_2=\frac{8d_2}{c_0}(1+\gamma_0)(K+4)$ from Lemma \ref{lem:DPproperty11}. We have 
\[e^2\leq \kappa_1\leq 4e^2,  \quad  \kappa_2\leq 192d_2(K+4) .\]
Then $\alpha_0$ given in Lemma \ref{lem:a0}
is a bounded constant depending on $d_2$ and $K$. 
Since $|\mathcal N_{1/4}^0|\leq 9^n$, from \eqref{eq:obtain} we have 
 \begin{align}
     \mathbb P\left(\mathcal D\cap \left\{ \lambda(M)\geq \alpha\sqrt{d_1(d_2-1)}\right\}\right)&\leq 2\cdot 9^{n}\exp\left(-\frac{3c_0\beta^2n}{8d_2(24+\beta)}\right) .\label{eq:818}
 \end{align}
 Taking $\beta>16d_2\sqrt{(d_2-1)C_1}$, then from \eqref{eq:betta}, the condition  $\beta>4\gamma_0\sqrt{d_2(d_2-1)}$ holds for \eqref{eq:true}. We obtain from  \eqref{eq:818} that for sufficiently large $\beta$ depending on $d_2,C_1$,
 \[ \mathbb P\left(\mathcal D\cap \left\{ \lambda(M)\geq \alpha\sqrt{d_1(d_2-1)}\right\}\right)\leq 2\cdot 9^n \exp\left(-\frac{\frac{1}{4}\beta^2n}{8d_2(24+\beta)}\right)\leq e^{-n},\]
where $\alpha=2(\beta+\alpha_0+3)$. We have 
 $
   \mathbb P\left( \lambda(M)\geq \alpha\sqrt{d_1(d_2-1)}\right)  \leq n^{-K}+e^{-n}.
$ 
This completes the proof by taking a new constant $\alpha'=\alpha\sqrt{d_2-1}$.\end{proof}

 \begin{remark}\label{rmk:final}
  In \eqref{eq:818}, we see that if $d_2$ is not a bounded constant, the probability bound cannot be $o(1)$, since the union bound is taken over  exponentially many points on a $\epsilon$-net. This is a limitation of the method we use and we must assume $d_2$ is fixed in our proof. 
 \end{remark}

\section*{Acknowledgements}
The author thanks Ioana Dumitriu and  Tobias Johnson for helpful discussion. This work is partially supported by NSF DMS-1949617. The author acknowledges
support from NSF DMS-1928930 during his participation in the program “Universality and Integrability
in Random Matrix Theory and Interacting Particle Systems” hosted by the Mathematical Sciences Research
Institute in Berkeley, California during the Fall semester of 2021.

\bibliographystyle{plain}
\bibliography{jotp.bib}

\begin{thebibliography}{10}

\bibitem{alon1986eigenvalues}
Noga Alon.
\newblock Eigenvalues and expanders.
\newblock {\em Combinatorica}, 6(2):83--96, 1986.

\bibitem{alon1985lambda1}
Noga Alon and Vitali~D Milman.
\newblock $\lambda_1$, isoperimetric inequalities for graphs, and
  superconcentrators.
\newblock {\em Journal of Combinatorial Theory, Series B}, 38(1):73--88, 1985.

\bibitem{arratia2019size}
Richard Arratia, Larry Goldstein, and Fred Kochman.
\newblock Size bias for one and all.
\newblock {\em Probability Surveys}, 16:1--61, 2019.

\bibitem{bauerschmidt2017bulk}
Roland Bauerschmidt, Jiaoyang Huang, Antti Knowles, and Horng-Tzer Yau.
\newblock Bulk eigenvalue statistics for random regular graphs.
\newblock {\em The Annals of Probability}, 45(6A):3626--3663, 2017.

\bibitem{bauerschmidt2020edge}
Roland Bauerschmidt, Jiaoyang Huang, Antti Knowles, and Horng-Tzer Yau.
\newblock Edge rigidity and universality of random regular graphs of
  intermediate degree.
\newblock {\em Geometric and Functional Analysis}, pages 1--77, 2020.

\bibitem{bauerschmidt2019local}
Roland Bauerschmidt, Jiaoyang Huang, and Horng-Tzer Yau.
\newblock Local {K}esten--{M}ckay law for random regular graphs.
\newblock {\em Communications in Mathematical Physics}, 369(2):523--636, 2019.

\bibitem{bauerschmidt2017local}
Roland Bauerschmidt, Antti Knowles, and Horng-Tzer Yau.
\newblock Local semicircle law for random regular graphs.
\newblock {\em Communications on Pure and Applied Mathematics},
  70(10):1898--1960, 2017.

\bibitem{bordenave2019new}
Charles Bordenave.
\newblock A new proof of {F}riedman's second eigenvalue theorem and its
  extension to random lifts.
\newblock In {\em Annales scientifiques de l'Ecole normale sup{\'e}rieure},
  2019.

\bibitem{brito2018spectral}
Gerandy Brito, Ioana Dumitriu, and Kameron~Decker Harris.
\newblock Spectral gap in random bipartite biregular graphs and applications.
\newblock {\em Combinatorics, Probability and Computing}, 31(2):229--267, 2022.

\bibitem{broder1998optimal}
Andrei~Z Broder, Alan~M Frieze, Stephen Suen, and Eli Upfal.
\newblock Optimal construction of edge-disjoint paths in random graphs.
\newblock {\em SIAM Journal on Computing}, 28(2):541--573, 1998.

\bibitem{burnwal2020deterministic}
Shantanu~Prasad Burnwal and Mathukumalli Vidyasagar.
\newblock Deterministic completion of rectangular matrices using asymmetric
  ramanujan graphs: Exact and stable recovery.
\newblock {\em IEEE Transactions on Signal Processing}, 68:3834--3848, 2020.

\bibitem{coja2009spectral}
Amin Coja-Oghlan and Andr{\'e} Lanka.
\newblock The spectral gap of random graphs with given expected degrees.
\newblock {\em the electronic journal of combinatorics}, 16(1):R138, 2009.

\bibitem{cook2019circular}
Nicholas Cook.
\newblock The circular law for random regular digraphs.
\newblock {\em Annales de l'Institut Henri Poincar{\'e}, Probabilit{\'e}s et
  Statistiques}, 55(4):2111--2167, 2019.

\bibitem{cook2018size}
Nicholas Cook, Larry Goldstein, and Tobias Johnson.
\newblock Size biased couplings and the spectral gap for random regular graphs.
\newblock {\em The Annals of Probability}, 46(1):72--125, 2018.

\bibitem{cook2017discrepancy}
Nicholas~A Cook.
\newblock Discrepancy properties for random regular digraphs.
\newblock {\em Random Structures \& Algorithms}, 50(1):23--58, 2017.

\bibitem{cook2017singularity}
Nicholas~A Cook.
\newblock On the singularity of adjacency matrices for random regular digraphs.
\newblock {\em Probability Theory and Related Fields}, 167(1-2):143--200, 2017.

\bibitem{coste2017spectral}
Simon Coste.
\newblock The spectral gap of sparse random digraphs.
\newblock {\em Annales de l'Institut Henri Poincar{\'e}, Probabilit{\'e}s et
  Statistiques}, 57(2):644--684, 2021.

\bibitem{coste2022characteristic}
Simon Coste, Gaultier Lambert, and Yizhe Zhu.
\newblock The characteristic polynomial of sums of random permutations and
  regular digraphs.
\newblock {\em arXiv preprint arXiv:2204.00524}, 2022.

\bibitem{dumitriu2013functional}
Ioana Dumitriu, Tobias Johnson, Soumik Pal, and Elliot Paquette.
\newblock Functional limit theorems for random regular graphs.
\newblock {\em Probability Theory and Related Fields}, 156(3-4):921--975, 2013.

\bibitem{dumitriu2020global}
Ioana Dumitriu and Yizhe Zhu.
\newblock Global eigenvalue fluctuations of random biregular bipartite graphs.
\newblock {\em arXiv preprint arXiv:2008.11760}, 2020.

\bibitem{feige2005spectral}
Uriel Feige and Eran Ofek.
\newblock Spectral techniques applied to sparse random graphs.
\newblock {\em Random Structures \& Algorithms}, 27(2):251--275, 2005.

\bibitem{feng1996spectra}
Keqin Feng and Wen-Ching~Winnie Li.
\newblock Spectra of hypergraphs and applications.
\newblock {\em Journal of number theory}, 60(1):1--22, 1996.

\bibitem{friedman2008proof}
Joel Friedman.
\newblock {\em A Proof of {A}lon's Second Eigenvalue Conjecture and Related
  Problems}.
\newblock Memoirs of the American Mathematical Society. American Mathematical
  Society, 2008.

\bibitem{friedman1989second}
Joel Friedman, Jeff Kahn, and Endre Szemer\'{e}di.
\newblock On the second eigenvalue of random regular graphs.
\newblock In {\em Proceedings of the twenty-first annual ACM symposium on
  Theory of computing}, pages 587--598, 1989.

\bibitem{friedman1995second}
Joel Friedman and Avi Wigderson.
\newblock On the second eigenvalue of hypergraphs.
\newblock {\em Combinatorica}, 15(1):43--65, 1995.

\bibitem{gamarnik2017matrix}
David Gamarnik, Quan Li, and Hongyi Zhang.
\newblock Matrix completion from ${O}(n)$ samples in linear time.
\newblock In {\em Conference on Learning Theory}, pages 940--947, 2017.

\bibitem{ghosh2011concentration}
Subhankar Ghosh and Larry Goldstein.
\newblock Concentration of measures via size-biased couplings.
\newblock {\em Probability theory and related fields}, 149(1):271--278, 2011.

\bibitem{he2022spectral}
Yukun He.
\newblock Spectral gap and edge universality of dense random regular graphs.
\newblock {\em arXiv preprint arXiv:2203.07317}, 2022.

\bibitem{Hoffman2019}
Christopher Hoffman, Matthew Kahle, and Elliot Paquette.
\newblock Spectral gaps of random graphs and applications.
\newblock {\em International Mathematics Research Notices}, 05 2019.

\bibitem{hoory2006expander}
Shlomo Hoory, Nathan Linial, and Avi Wigderson.
\newblock Expander graphs and their applications.
\newblock {\em Bulletin of the American Mathematical Society}, 43(4):439--561,
  2006.

\bibitem{horn_johnson_1991}
Roger~A. Horn and Charles~R. Johnson.
\newblock {\em Topics in Matrix Analysis}.
\newblock Cambridge University Press, 1991.

\bibitem{huang2021spectrum}
Jiaoyang Huang and Horng-Tzer Yau.
\newblock Spectrum of random $d$-regular graphs up to the edge.
\newblock {\em arXiv preprint arXiv:2102.00963}, 2021.

\bibitem{johnson2015exchangeable}
Tobias Johnson.
\newblock Exchangeable pairs, switchings, and random regular graphs.
\newblock {\em The Electronic Journal of Combinatorics}, 22(1):P1--33, 2015.

\bibitem{keshavan2010matrix}
Raghunandan~H Keshavan, Andrea Montanari, and Sewoong Oh.
\newblock Matrix completion from a few entries.
\newblock {\em IEEE transactions on information theory}, 56(6):2980--2998,
  2010.

\bibitem{kim2007small}
Jeong~Han Kim, Benny Sudakov, and Van Vu.
\newblock Small subgraphs of random regular graphs.
\newblock {\em Discrete Mathematics}, 307(15):1961--1967, 2007.

\bibitem{krivelevich2001random2}
Michael Krivelevich, Benny Sudakov, Van~H Vu, and Nicholas~C Wormald.
\newblock Random regular graphs of high degree.
\newblock {\em Random Structures \& Algorithms}, 18(4):346--363, 2001.

\bibitem{ledoux2001concentration}
Michel Ledoux.
\newblock {\em The concentration of measure phenomenon}.
\newblock Number~89. American Mathematical Soc., 2001.

\bibitem{lei2015consistency}
Jing Lei and Alessandro Rinaldo.
\newblock Consistency of spectral clustering in stochastic block models.
\newblock {\em The Annals of Statistics}, 43(1):215--237, 2015.

\bibitem{sole1996spectra}
Wen-Ching~Winnie Li and Patrick Sol{\'e}.
\newblock Spectra of regular graphs and hypergraphs and orthogonal polynomials.
\newblock {\em European Journal of Combinatorics}, 17(5):461--477, 1996.

\bibitem{litvak2019structure}
Alexander Litvak, Anna Lytova, Konstantin Tikhomirov, Nicole
  Tomczak-Jaegermann, and Pierre Youssef.
\newblock Structure of eigenvectors of random regular digraphs.
\newblock {\em Transactions of the American Mathematical Society},
  371(11):8097--8172, 2019.

\bibitem{litvak2017adjacency}
Alexander~E Litvak, Anna Lytova, Konstantin Tikhomirov, Nicole
  Tomczak-Jaegermann, and Pierre Youssef.
\newblock Adjacency matrices of random digraphs: singularity and
  anti-concentration.
\newblock {\em Journal of Mathematical Analysis and Applications},
  445(2):1447--1491, 2017.

\bibitem{litvak2019smallest}
Alexander~E Litvak, Anna Lytova, Konstantin Tikhomirov, Nicole
  Tomczak-Jaegermann, and Pierre Youssef.
\newblock The smallest singular value of a shifted $d$-regular random square
  matrix.
\newblock {\em Probability Theory and Related Fields}, 173(3-4):1301--1347,
  2019.

\bibitem{litvak2018circular}
Alexander~E Litvak, Anna Lytova, Konstantin Tikhomirov, Nicole
  Tomczak-Jaegermann, and Pierre Youssef.
\newblock Circular law for sparse random regular digraphs.
\newblock {\em Journal of the European Mathematical Society}, 23(2):467--501,
  2020.

\bibitem{lubetzky2011spectra}
Eyal Lubetzky, Benny Sudakov, and Van Vu.
\newblock Spectra of lifted {R}amanujan graphs.
\newblock {\em Advances in Mathematics}, 227(4):1612--1645, 2011.

\bibitem{marcus2015interlacing}
Adam~W Marcus, Daniel~A Spielman, and Nikhil Srivastava.
\newblock Interlacing families {I}: Bipartite {R}amanujan graphs of all
  degrees.
\newblock {\em Annals of Mathematics}, pages 307--325, 2015.

\bibitem{mckay1981expected}
Brendan~D McKay.
\newblock The expected eigenvalue distribution of a large regular graph.
\newblock {\em Linear Algebra and its Applications}, 40:203--216, 1981.

\bibitem{mckay1981subgraphs}
Brendan~D McKay.
\newblock Subgraphs of random graphs with specified degrees.
\newblock In {\em Congressus Numerantium}, volume~33, pages 213--223, 1981.

\bibitem{mckay2004short}
Brendan~D McKay, Nicholas~C Wormald, and Beata Wysocka.
\newblock Short cycles in random regular graphs.
\newblock {\em the electronic journal of combinatorics}, pages R66--R66, 2004.

\bibitem{nilli1991second}
Alon Nilli.
\newblock On the second eigenvalue of a graph.
\newblock {\em Discrete Mathematics}, 91(2):207--210, 1991.

\bibitem{sarid2022spectral}
Amir Sarid.
\newblock The spectral gap of random regular graphs.
\newblock {\em arXiv preprint arXiv:2201.02015}, 2022.

\bibitem{sipser1996expander}
Michael Sipser and Daniel~A Spielman.
\newblock Expander codes.
\newblock {\em IEEE transactions on Information Theory}, 42(6):1710--1722,
  1996.

\bibitem{tanner1981recursive}
R~Tanner.
\newblock A recursive approach to low complexity codes.
\newblock {\em IEEE Transactions on information theory}, 27(5):533--547, 1981.

\bibitem{tikhomirov2019spectral}
Konstantin Tikhomirov and Pierre Youssef.
\newblock The spectral gap of dense random regular graphs.
\newblock {\em The Annals of Probability}, 47(1):362--419, 2019.

\bibitem{tikhomirov2020sharp}
Konstantin Tikhomirov and Pierre Youssef.
\newblock Sharp {P}oincar\'{e} and log-{S}obolev inequalities for the switch
  chain on regular bipartite graphs.
\newblock {\em arXiv preprint arXiv:2007.02729}, 2020.

\bibitem{vu2008random}
Van Vu.
\newblock Random discrete matrices.
\newblock In {\em Horizons of combinatorics}, pages 257--280. Springer, 2008.

\bibitem{vu2014combinatorial}
Van~H Vu.
\newblock Combinatorial problems in random matrix theory.
\newblock In {\em Proceedings ICM}, volume~4, pages 489--508, 2014.

\bibitem{wormald1999models}
Nicholas~C Wormald.
\newblock Models of random regular graphs.
\newblock {\em London Mathematical Society Lecture Note Series}, pages
  239--298, 1999.

\bibitem{yang2017bulk}
Kevin Yang.
\newblock Bulk eigenvalue correlation statistics of random biregular bipartite
  graphs.
\newblock {\em arXiv preprint arXiv:1705.00083}, 2017.

\bibitem{yang2017local}
Kevin Yang.
\newblock Local {M}archenko-{P}astur law for random bipartite graphs.
\newblock {\em arXiv preprint arXiv:1704.08672}, 2017.

\bibitem{zhou2019sparse}
Zhixin Zhou and Yizhe Zhu.
\newblock Sparse random tensors: Concentration, regularization and
  applications.
\newblock {\em Electronic Journal of Statistics}, 15(1):2483--2516, 2021.

\end{thebibliography}
\end{document}